\newtheorem{thm}{Theorem}
\newtheorem{add}[thm]{Addendum}
\newtheorem*{thma}{Theorem~A}
\newtheorem*{thmb}{Theorem~B}
\newtheorem{theorem}[thm]{Theorem}
\newtheorem{lemma}[thm]{Lemma}
\newtheorem{proposition}[thm]{Proposition}
\theoremstyle{definition}
\newtheorem*{definition*}{Definition}
\newtheorem{remark}[thm]{Remark}
\newcommand{\CPb}{\overline{\mathbb{CP}}{}^{2}}
\newcommand{\CP}{{\mathbb{CP}}{}^{2}}
\newcommand{\N}{\mathbb{N}}
\newcommand{\Z}{\mathbb{Z}}
\newcommand{\D}{\operatorname{Diff{^+}}}
\newcommand{\twprod}{\mathbin{\mathchoice%
    {\ooalign{\raise1.15ex\hbox{$\scriptstyle\sim$}\cr\hidewidth$\times$\hidewidth\cr}}%
    {\ooalign{\raise1.15ex\hbox{$\scriptstyle\sim$}\cr\hidewidth$\times$\hidewidth\cr}}%
    {\ooalign{\raise.85ex\hbox{$\scriptscriptstyle\sim$}\cr\hidewidth$\scriptstyle\times$\hidewidth\cr}}%
    {\ooalign{\raise.65ex\hbox{$\scriptscriptstyle\sim$}\cr\hidewidth$\scriptscriptstyle\times$\hidewidth\cr}}%   
    }}
\newcommand{\M}{\operatorname{Mod}}
\newcommand{\Spin}{\operatorname{Spin}}
\newcommand{\DT}[1]{t_{\scaleto{\mathstrut #1}{9pt}}}
\newcommand{\LDT}[1]{t_{\scaleto{\mathstrut #1}{9pt}}^{-1}}
\def \x {\times}
\def \eu{{\text{e}}}
\begin{document}

\title[Exotic $4$--manifolds with signature zero  ] 
{Exotic $4$--manifolds with signature zero  }

\author[R. \.{I}. Baykur]{R. \.{I}nan\c{c} Baykur}
\address{Department of Mathematics and Statistics, University of Massachusetts, Amherst, MA 01003, USA}
\email{inanc.baykur@umass.edu}

\author[N. Hamada]{Noriyuki Hamada}
\address{Institute of Mathematics for Industry,
Kyushu University, Motooka 744, Nishi-ku, Fukuoka 819-0395,
Japan}
\email{hamada.noriyuki.051@m.kyushu-u.ac.jp}
%\date{\today}

\begin{abstract} 
We produce infinitely many distinct irreducible smooth $4$--manifolds homeomorphic to $\#_{2m+1}(\CP \,\#\, \CPb)$ and $\#_{2n+1} (S^2 \times S^2)$, respectively, for each  $m \geq 4$ and $n \geq 5$. These provide the smallest exotic closed simply connected \mbox{$4$--manifolds} with signature zero known to date, and in each one of these homeomorphism classes, we get minimal symplectic $4$--manifolds. Our novel exotic $4$--manifolds are derived from fairly special small Lefschetz fibrations we build via positive factorizations in the mapping class group, with spin and non-spin monodromies.
\end{abstract}

\maketitle

\setcounter{secnumdepth}{2}
\setcounter{section}{0}

%=================================================================================

\vspace{0.15in}
\section{Introduction} 
%======================================================================================

Despite the spectacular advances in the constructions of exotic smooth structures on closed $4$--manifolds, signature zero $4$--manifolds with small topology have been quite resilient to these epic efforts. Our main theorem addresses this outstanding problem.

\begin{thma}\
There exist infinitely many, pairwise non-diffeomorphic, irreducible \mbox{$4$--manifolds} homeomorphic to $\#_{2m+1} (\CP \# \CPb)$ and $\#_{2n+1} (S^2 \times S^2)$, respectively, for every  $m \geq 4$, $n \geq 5$. Each family contains minimal symplectic $4$--manifolds, as well as \mbox{$4$--manifolds} that do not admit a symplectic structure. All of them are derived by surgeries along a link of Lagrangian tori in  symplectic $4$--manifolds explicitly described as Lefschetz fibrations over a genus--$2$ surface. 
\end{thma}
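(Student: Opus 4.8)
The plan is to realize every one of these $4$--manifolds as the outcome of torus surgeries performed on two tailor--made symplectic Lefschetz fibrations over $\Sigma_2$. First I would write down, by hand, two positive factorizations of a boundary multitwist in the mapping class group of a surface with boundary: one whose vanishing cycles are all ``even'' so that the induced closed fibration $Z_{\mathrm{sp}}$ has \emph{spin} total space, and a companion factorization giving a \emph{non-spin} $Z_{\mathrm{ns}}$, verifying each factorization through the familiar relations (chain, lantern, star, hyperelliptic) in the mapping class group. Capping off and identifying the resulting $4$--manifold as a symplectic Lefschetz fibration over $\Sigma_2$ via the Gompf--Thurston construction, I would then read off $\pi_1$ from the vanishing cycles together with generators of $\pi_1(\Sigma_2)$, compute $\chi$ from the fiber genus and the number of nodal fibers, and check that $\sigma = 0$ using the signature formula for such fibrations (or by exhibiting a symmetry). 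The number of nodal fibers, or the fiber genus, should be set up to carry a free parameter, so that after the surgeries below every $m \ge 4$ and $n \ge 5$ is attained.

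Next I would locate inside $Z_{\mathrm{sp}}$ and $Z_{\mathrm{ns}}$ an explicit \emph{link} of pairwise disjoint Lagrangian tori: some built from matching paths joining repeated vanishing cycles, and some of product form (a loop in the base)$\,\times\,$(a curve on the fiber) sitting in a trivial region away from the critical values. The crucial bookkeeping is to choose these tori, together with framings, so that Luttinger surgeries along them (i) trivialize $\pi_1$, producing simply-connected symplectic $4$--manifolds $X_{\mathrm{sp}}$ and $X_{\mathrm{ns}}$, and (ii) leave at least one homologically essential square-zero torus $T$ along which we may later vary the surgery. Since Luttinger surgery changes neither $\chi$ nor $\sigma$ and preserves symplecticity, $X_{\mathrm{sp}}$ is spin with $\sigma = 0$ and $X_{\mathrm{ns}}$ is non-spin with $\sigma = 0$; matching $b_2$ to $4n+2$ resp.\ $4m+2$ via the parameter and invoking Freedman's classification identifies them, up to homeomorphism, with $\#_{2n+1}(S^2 \times S^2)$ and $\#_{2m+1}(\CP \# \CPb)$. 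Minimality of these symplectic pieces I would deduce from Usher's theorem on the minimality of symplectic surgeries, and irreducibility from the fact that a minimal symplectic $4$--manifold with $b^+ > 1$ is irreducible; being symplectic with $b^+ \ge 2$ they carry nontrivial Seiberg--Witten invariants by Taubes, hence cannot be diffeomorphic to the standard connected sums, which admit positive scalar curvature metrics and so have vanishing Seiberg--Witten invariants. This already gives exotic, irreducible, minimal symplectic representatives.

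To produce the promised infinite families and the non-symplectic members, I would fix the essential torus $T$ and replace the single Luttinger surgery on it by the family of torus surgeries with coefficient $1/k$, $k \in \Z$ --- equivalently, perform Fintushel--Stern knot surgery along $T$ with knots of pairwise distinct Alexander polynomials. None of these alters $\pi_1$ or the intersection form, so the homeomorphism type is untouched, while the set (or the coefficients) of Seiberg--Witten basic classes, governed by the Morgan--Mrowka--Szab\'o and Fintushel--Stern surgery formulas, genuinely depends on $k$ (resp.\ on the knot); this both distinguishes the members pairwise and, for suitable choices, forces the basic class set to violate the constraint that a symplectic canonical class would impose, so those members admit no symplectic structure. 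Irreducibility of every member follows from nontriviality of Seiberg--Witten together with the standard bookkeeping showing minimality is preserved under these torus surgeries. The main obstacle, and the real content, is the first step: engineering sufficiently \emph{small} Lefschetz fibrations over $\Sigma_2$, in both a spin and a non-spin flavor, whose monodromies are simultaneously explicit enough to compute $\pi_1$, symmetric enough to force $\sigma = 0$, and rich enough in disjoint Lagrangian tori to both kill $\pi_1$ and leave room for the infinite family; once these models are in hand, everything afterward is standard surgical technology.
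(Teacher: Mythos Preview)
Your outline is correct and follows essentially the same strategy as the paper: build explicit signature-zero Lefschetz fibrations over $\Sigma_2$ in spin and non-spin flavors, Luttinger-surger along a link of Lagrangian tori to kill $\pi_1$, identify the homeomorphism type via Freedman, and then vary one surgery coefficient to produce the infinite families with non-symplectic members.

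A few places where your sketch diverges from what actually has to happen, and which you should tighten. First, the paper does not build the $\Sigma_2$--fibration directly; it first constructs a very small fibration over $T^2$ (only four nodes, signature zero) and then extends by a \emph{trivial} commutator to $\Sigma_2$. This two-step move is not cosmetic: it forces the complement of the ``kernel'' to be a union of honest surface products $\Sigma_{g-1}^1\times\Sigma_1^1$ and $\Sigma_1^1\times\Sigma_2$, and \emph{all} the Lagrangian tori you surger live in these product pieces, not along matching paths. The mechanism that kills $\pi_1$ is then a specific lemma (Proposition~\ref{prop:Luttinger} in the paper): Luttinger surgeries in $\Sigma_g^1\times\Sigma_1^1$ reduce $\pi_1$ to the normal closure of two curves on a fiber, which are then killed by two Lefschetz thimbles sitting in the kernel. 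Your ``crucial bookkeeping'' is exactly this, and it is the content of the paper. Second, over a base of positive genus the spin condition is \emph{not} just a parity condition on the vanishing cycles: you also need the mapping classes $A_j,B_j$ appearing in the commutators to lie in the spin mapping class group $\M(\Sigma_g,s)$ (this is the paper's Proposition~\ref{prop:spin}, which generalizes Stipsicz's $h=0$ criterion). Third, minimality does not come from Usher (that is about fiber sums); the paper uses Stipsicz's theorem that a relatively minimal Lefschetz fibration over a positive-genus base is minimal, and then observes that an exceptional sphere in the surgered manifold could be isotoped off the tori and would survive the reverse Luttinger surgery, contradicting minimality of the model.
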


Recall that $b_2^+(X)$ is odd for a closed simply connected symplectic $4$--manifold $X$. When, in addition, the signature of $X$ is zero, by Freedman's theorem \cite{Freedman}, $X$ falls into one of the two homeomorphism classes listed in the theorem: the connected sum of an odd number of copies of $\CP \# \CPb$ or of $S^2 \times S^2$, depending on whether $X$ has an odd or even intersection form. Together with the previous results in the literature \cite{Park1, ABBKP, AkhmedovParkOdd, Park2, ParkSzabo}, we, in particular, settle the existence of symplectic closed simply connected \mbox{$4$--manifolds} with Chern numbers $c_1^2
\leq 2c_2$, for all possible pairs but 
$(c_2, c_1^2)=(8,16),(12,24),(16,32)$ and $(20,40)$ in the spin case. 
Furthermore, for each $m 
\geq 5$ and $n \geq 6$, we produce infinitely many symplectic $4$--manifolds that are not diffeomorphic to each other or any complex surface, in each one of the homeomorphism classes of $\#_{2m+1} (\CP \# \CPb)$ and $\#_{2n+1} (S^2 \times S^2)$, respectively; see~Addendum~\ref{add:knotsurgery}. We get examples with even smaller topology as we allow non-trivial fundamental group; in Addendum~\ref{add:smallest}, we produce infinitely many exotic copies of $(T^2 \x S^2) \#_ {8} (\CP \, \# \, \CPb)$ and $(T^2 \x S^2) \#_{10} (S^2 \x S^2)$.

The first examples of irreducible $4$--manifolds in the homeomorphism classes listed in Theorem~A were given by Jongil Park in \cite{Park1, Park2} for large $m$ and $n$. In subsequent works by various authors over the past 20 years, these results were  improved to cover smaller and smaller $m$ and $n$, culminating in the best results in the non-spin case for $m \geq 8$, by Akhmedov, Sakall{\i} and Yeung in \cite{ASY}, and in the spin case for $n \geq 11$ by our earlier work in \cite{BaykurHamada}. Theorem~A  is a simultaneous improvement of these results. 

Moreover, all earlier constructions of exotic $\#_{2m+1} (\CP \# \CPb)$ and $\#_{2n+1} (S^2 \times S^2)$ utilize  complex surfaces with non-negative signatures built by algebraic geometers as essential ingredients, usually with only partial understanding of their smooth topology. Our construction of the exotic $4$--manifolds in Theorem~A is all explicit; for instance, one can algorithmically describe their Kirby diagrams; see Remark~\ref{rk:kirby}. Our main ingredients are symplectic Lefschetz fibrations over the genus--$2$ surface that we build explicitly via positive factorizations in mapping class groups, and they are not homotopy equivalent to any complex surface. As we derive our examples via surgeries along a link of Lagrangian tori in these fibrations, our constructions can be regarded as instances of the reverse engineering approach of Fintushel and Stern \cite{FPS}. Our success in killing the fundamental group via surgeries along a link of Lagrangian tori boils down to the fact that our models contain what we call \emph{kernels}, certain codimension zero symplectic submanifolds with embedded disks relative to their boundary; see Remark~\ref{rk:kernel} and Proposition~\ref{prop:Luttinger}. In our examples, these kernels appear as symplectic subfibrations over a punctured torus. In our search for the latter, we obtain the following results:

\begin{thmb}
There are infinitely many relatively minimal symplectic genus--$g$ Lefschetz fibrations 
$\{(X_{g,n}, f_{g,n})\}$ and $\{(Y_{g,n}, h_{g,n})\}$ over the $2$--torus with signature zero,  which have singular sets consisting of only four nodes, and with non-spin and spin monodromies, defined for all $n \in \N$, and for $g \geq 4$ and $g \geq 5$, respectively.  The total spaces $X_{g,n}$ and $Y_{g,n}$ are irreducible non-spin and spin symplectic $4$--manifolds. In each infinite family (for fixed $g$), the total spaces are not homotopy equivalent to each other or to any complex surface. 
\end{thmb}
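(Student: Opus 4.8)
\smallskip
\noindent\textbf{Proof strategy.}~The plan is to build the two families by hand from positive factorizations in the mapping class group and then extract all the invariants directly. First I would produce, for one small base genus (say $g=4$ in the non--spin case, $g=5$ in the spin case), an explicit identity
\[
 t_{c_1}\,t_{c_2}\,t_{c_3}\,t_{c_4}\;=\;[\alpha,\beta]\qquad\text{in }\M(\Sigma_g),
\]
where $c_1,\dots,c_4$ are non--separating simple closed curves drawn explicitly on $\Sigma_g$ and $\alpha,\beta$ are prescribed products of Dehn twists. Such a relation would be certified by repeated application of the standard lantern, chain, two--chain, braid and commutator relations in $\M(\Sigma_g)$. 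The word $t_{c_1}\cdots t_{c_4}$ then describes a genus--$g$ Lefschetz fibration with four nodes over the once--punctured torus whose boundary monodromy is $t_{c_1}\cdots t_{c_4}$; since this equals $[\alpha,\beta]$, one can glue on the $\Sigma_g$--bundle over the once--punctured torus with monodromies $\alpha$ and $\beta$ to obtain a genus--$g$ Lefschetz fibration $(X_g,f_g)\to T^2$ with exactly four nodes. To reach all $g\ge 4$ (resp. $g\ge5$) at once, I would stabilize the configuration by attaching handles away from the curves, which leaves the four vanishing cycles and the relation intact. Since each $c_i$ is non--separating it is essential, so the fibration is relatively minimal.

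Next I would read off the topology of $X_g$ from the factorization. One has $e(X_g)=e(\Sigma_g)\,e(T^2)+4=4$, and, the fibration having connected fibers, $\pi_1(X_g)$ is the quotient of $\pi_1(\Sigma_g)$ by the normal closure of the classes of $c_1,\dots,c_4$ together with the conjugation relations imposed by the $\alpha$-- and $\beta$--monodromies; the curves are selected so that $b_1=2$ (the minimum forced by connectedness of fibers over $T^2$), forcing $b_2=6$. The signature is then pinned down by computing the intersection form explicitly from the induced handle decomposition (equivalently, by running the algorithm for signatures of Lefschetz fibrations on this four--node word) and checking $b_2^+=b_2^-=3$. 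For the spin versus non--spin dichotomy I would invoke the criterion characterizing when the total space of a Lefschetz fibration is spin, in terms of a $\Z/2$--quadratic refinement $q$ of the mod--$2$ intersection form on the fiber with $q(c_i)=1$ for all $i$ and invariant under the base monodromy: the curves in the family $\{Y_{g,n}\}$ are arranged so such a $q$ exists, while those in $\{X_{g,n}\}$ are arranged so that no admissible $q$ exists. Gompf's theorem (after Thurston) then makes all these total spaces symplectic; the theorem of Usher confirming Stipsicz's minimality conjecture makes them minimal (the base has positive genus and the fiber genus is at least $2$); and, the fundamental groups being residually finite, the theorem of Hamilton--Kotschick makes them irreducible.

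For the infinite families, I would fix a non--separating curve $d$ on $\Sigma_g$, disjoint from $c_1,\dots,c_4$ and fixed by $\alpha$, so that $t_d$ commutes with $\alpha$ and hence
\[
 t_{c_1}\,t_{c_2}\,t_{c_3}\,t_{c_4}=[\alpha,\beta]=[\alpha,\beta\,t_d^{\,n}]\qquad\text{for every }n\in\N .
\]
The same capping--off then yields genus--$g$ Lefschetz fibrations $(X_{g,n},f_{g,n})$ (resp. $(Y_{g,n},h_{g,n})$) over $T^2$, each with exactly the same four vanishing cycles, hence four nodes, the same signature, and the same spin type. Because replacing $\beta$ by $\beta\,t_d^{\,n}$ alters the conjugation relations defining $\pi_1$, one checks that the resulting fundamental groups -- or already their abelianizations, which pick up $n$--dependent torsion -- are pairwise non--isomorphic, so the $X_{g,n}$, and likewise the $Y_{g,n}$, are pairwise non--homotopy equivalent. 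Finally, a complex surface homotopy equivalent to one of these would be minimal with $c_1^2=2c_2=8>0$, hence of general type with $\chi(\mathcal O)=1$; a comparison of Seiberg--Witten basic classes (a relatively minimal Lefschetz fibration over a positive--genus base carries essentially only $\pm K$) together with the classification of minimal surfaces having $c_1^2=2c_2$ and the given fundamental groups rules this out.

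I expect the first step to be the genuine obstacle: one needs four--node factorizations over the torus that simultaneously (i) are valid identities in $\M(\Sigma_g)$, (ii) realize the rather rigid prescribed fundamental group, (iii) realize both spin types, and (iv) admit the auxiliary curve $d$ with $\alpha(d)=d$ needed for the infinite family. Balancing these constraints is what makes the explicit curve configurations delicate, and verifying the relations is the most computation--heavy part of the argument.
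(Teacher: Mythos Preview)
Your broad strategy---build a four-twist-equals-commutator relation, twist one commutator entry to get an infinite family, distinguish by $H_1$, stabilize to higher genus---matches the paper's, and your variation mechanism $[\alpha,\beta]=[\alpha,\beta t_d^{\,n}]$ when $\alpha(d)=d$ is exactly the trick the paper uses. But several specifics diverge from what the paper actually does, and a couple of your steps are genuine gaps rather than alternative routes.

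\smallskip
\textbf{Where the signature zero comes from.} The paper does not compute the intersection form or run a signature algorithm on the four-node word. Instead it \emph{derives} the commutator relation from a known positive factorization of signature zero: in the non-spin case a lift of Matsumoto's genus--$2$ relation (a pencil on $T^2\twprod S^2$, hence $\sigma=0$), in the spin case an eight-holed torus relation built from one $2$--chain and seven lanterns (net signature contribution $-7+7=0$). Because embeddings, conjugations, Hurwitz moves, and packaging subwords into commutators do not change the Endo--Nagami signature count, $\sigma=0$ is inherited automatically. This is the real content of the construction; your plan to ``certify by repeated application of standard relations'' and then check $b_2^+=b_2^-$ afterwards is not a substitute, because without knowing the provenance of the relation you have no mechanism forcing $\sigma=0$.

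\smallskip
\textbf{Non-spin and $b_1$.} In the paper's non-spin family one of the four vanishing cycles ($v_1$) is \emph{separating}; the resulting reducible fiber gives a surface of square $-1$, and that is how non-spinness is established. So your stipulation that all $c_i$ be non-separating departs from the paper. Your claim that the curves can be chosen with $b_1(X_g)=2$ is also not what the paper achieves: one finds $H_1(X_{g,n})\cong\Z^{2g-4}\oplus\Z/n$ and $H_1(Y_{g,n})\cong\Z^{2g-3}\oplus\Z/n$, so $b_1$ grows with $g$. This is consistent with your own stabilization step (attaching handles away from everything adds free summands to $H_1$); asking for $b_1=2$ after stabilization is self-contradictory.

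\smallskip
\textbf{Not complex.} Your proposed argument invokes a ``classification of minimal surfaces with $c_1^2=2c_2$ and the given fundamental groups,'' but no such classification exists (indeed, simply-connected minimal general-type surfaces on this line are not known). The paper's argument is much lighter: by Gieseker there are only finitely many deformation classes of general-type surfaces with fixed $(c_1^2,\chi_h)=(8,1)$, while the family realizes infinitely many homotopy types; pass to a cofinite subfamily and relabel.
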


We build the spin fibrations in the theorem by producing positive factorizations, which involve positive Dehn twists and commutators, in \emph{spin mapping class groups}. In Proposition~\ref{prop:spin}, we explain how to equip the total spaces of these fibrations over arbitrary surfaces with spin structures. Working in this smaller mapping class group is crucial for our construction of exotic \mbox{$\#_{2n+1} (S^2 \times S^2)$.} 

\vspace{0.1in}
\noindent \textit{Acknowledgements.} The first author would like to thank the Harvard University, Max Planck Institute for Mathematics in Bonn and  Erd\"{o}s Center in Budapest for their hospitality while writing this article. This work was supported by the NSF grant DMS-2005327. 
The second author was supported by JSPS KAKENHI Grant Number 20H01802.

\smallskip
\section{Preliminary results}
%======================================================================================

Here we summarize our conventions, review the main definitions and background results needed throughout the paper, and prove a few preliminary results for later sections. 
We refer the reader to ~\cite{GompfStipsicz}, ~\cite{FarbMargalit} and ~\cite{BaykurHamada} for more details on Lefschetz fibrations, symplectic $4$--manifolds, mapping class groups and spin monodromies.

\noindent \textit{\underline{Conventions}:} All the manifolds and the maps between them we consider in this article are smooth. We denote a compact orientable surface of genus $g$ with $b$ boundary components by $\Sigma_{g}^b$,  whereas we drop $b$ from the notation when there is no boundary. We denote by $\D(\Sigma_g^b)$ the group of orientation--preserving diffeomorphisms 
of $\Sigma_g^b$ that restrict to the identity around the boundary. The \textit{mapping class group} of $\Sigma_g^b$ is $\M(\Sigma_{g}^b):=\pi_0(\mathrm{Diff}^+(\Sigma_g^b))$. By curves on $\Sigma_g^b$, we always mean simple closed curves. We consider them and the elements in $\D(\Sigma_g^b)$ up to isotopy and denote their isotopy classes by the same symbols. The products of mapping classes act on curves starting with the rightmost factor. The right-handed, or the \textit{positive Dehn twist}, along a curve $c$ on $\Sigma_g^b$ is denoted by $t_c$ in $\M(\Sigma_g^b)$. For any $A$ and $B$ in $\M(\Sigma_g^b)$, we denote the commutator $[A,B]:=A B A^{-1} B^{-1}$ and the conjugate $A^{B}:=BA B^{-1}$. The index set $\N$ we often use is the set of non-negative integers. Whenever no coefficient group is specified, $H_*(\_)$ and $H^*(\_)$ denote the the integral (co)homology groups.

\subsection{Lefschetz fibrations and monodromy factorizations} \
%======================================================================================

Let $X$ and $\Sigma$ be compact connected oriented smooth manifolds of dimensions $4$ and $2$. A map $f \colon X \to \Sigma$ is a \textit{Lefschetz fibration} over $\Sigma$ if around every critical point $f$ there are orientation-preserving complex coordinates in which $f$ is given by $(z_1,z_2) \mapsto z_1^2+z_2^2$.
We assume no fiber of $f$ contains an embedded sphere of self-intersection $-1$ (i.e., the fibration is \emph{relatively minimal}) or more than one critical point.  Any regular fiber $F$ of $f$ is diffeomorphic to a fixed $\Sigma_g^b$, where $g$ is called the \textit{genus} of the fibration. 

Let $X$ be a closed  oriented $4$--manifold.  A \textit{monodromy factorization} or \textit{positive factorization} for a genus--$g$ Lefschetz fibration $(X,f)$ over a genus--$h$ surface, with $b$ disjoint sections $\{S_i\}$ of self-intersections $S_i \cdot S_i=-k_i$ is a relation of the form 
\begin{equation} \label{eq:monodromy}
 t_{c_1} t_{c_2} \, \cdots t_{c_\ell} [A_1, B_1] \cdots [A_h, B_h] =  t_{\delta_1}^{k_1} \cdots t_{\delta_b}^{k_b}  
   \ \ \ \text{ in }\M(\Sigma_g^b),
\end{equation}
where $\{\delta_i\}$ are boundary parallel curves along distinct boundary components of $\Sigma_g^b$.\footnote{To provide the additional information on the sections, here the monodromy appears to be described only in the complement of the sections, but it naturally extends to all of $X$ as a factorization of identity in $\M(\Sigma_g)$ under the homomorphism induced by the inclusion $\Sigma_g^b \hookrightarrow \Sigma_g$.} 
Here, each $t_{c_i}$ comes from the local monodromy around a critical value, with respect to a reference regular fiber $F$ identified with $\Sigma_g \supset \Sigma_g^b$, where the singular nodal fiber is obtained by crushing the \emph{vanishing cycle} $c_i$ on $F$, whereas the $A_i, B_i$ are the images of a standard basis of $\pi_1(\Sigma_h^1)$ under the monodromy homomorphism for the $\Sigma_g^b$--bundle obtained by restricting $f$ over the complement of a disk $D \subset \Sigma_h$ that contains all the  critical values. Note that when $\ell=0$, the relation~\eqref{eq:monodromy} is the monodromy factorization of a $\Sigma_g$--bundle over $\Sigma_h$ with sections.

Conversely, any relation of the form~\eqref{eq:monodromy} prescribes  a Lefschetz fibration $(X,f)$ as above with a specific handle decomposition, and the basic algebraic topology of $X$ can be easily inferred from the monodromy factorization.  (When $\ell=0$ and $g \leq 1$, one needs additional information, but we do not deal with these cases.) The Euler characteristic is given by the formula $\eu(X)=4(g-1)(h-1)+ \ell$ and the signature $\sigma(X)$ can be calculated  algorithmically using \cite{Endo, EndoNagami} if enough is known about how the relation~\eqref{eq:monodromy} is derived in the mapping class group, or by invoking  \cite{Ozbagci}. With Poincar\'{e} duality in hand, to determine all of $H_*(X)$ and $H^*(X)$ it remains to pin down $H_1(X)$, which can be calculated by abelianizing the presentation for $\pi_1(X)$ we describe next. 

Assume that $(X,f)$ has a section, that is, $b>0$ in~\eqref{eq:monodromy}. Let $\mathcal{B}_F:=\{a_1, b_1, \cdots, a_g, b_g \}$ and $\mathcal{B}_B:=\{ x_1, y_1, \cdots, x_h, y_h \}$ be standard generating sets for $\pi_1(\Sigma_g)$ and $\pi_1(\Sigma_h)$, respectively, so that the surface relations are of the forms $\Pi_{i=1}^g[a_i,b_i]=1$ and $\Pi_{j=1}^h[x_j,y_j]=1$. For $D \subset \Sigma_h$ a disk containing all the critical values, $f$ is a union of a Lefschetz fibration over $D$ and a $\Sigma_g$--bundle over $\Sigma_h^1 \cong \Sigma_h \setminus D$. 
By examining the handle decomposition associated with the fibration over $D$, as well as the short exact sequence of homotopy groups associated with the bundle over $\Sigma_h^1$, one concludes the following:

\begin{proposition} \label{prop:pi1}
There is a presentation for $\pi_1(X)$ with generators  $\mathcal{B}_F \cup \mathcal{B}_B$ and relations
\begin{enumerate}[(i)]
	\item
	$\prod_{i=1}^g[a_i,b_i]=1$ and $\prod_{j=1}^h[x_j,y_j]=1$,
	\item
	$c_i=1$, for $i=1, \ldots, \ell$, 
	\item
	$x_jcx_j^{-1} = (A_j)_*(c)$ and $y_jcy_j^{-1}=(B_j)_*(c)$ for any $c \in \mathcal{B}_F$ and $j=1, \cdots, h$.
\end{enumerate}
\end{proposition}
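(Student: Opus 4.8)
The plan is to compute $\pi_1(X)$ via the Seifert--van Kampen theorem applied to the decomposition $X = f^{-1}(D) \cup_N f^{-1}(\Sigma_h^1)$, where $D \subset \Sigma_h$ is the disk containing all critical values, $\Sigma_h^1 \cong \Sigma_h \setminus D$, and $N := f^{-1}(\partial D)$ is a $\Sigma_g$--bundle over $S^1$. Throughout I would take all base points on the section $S_1$ --- which exists precisely because $b>0$ --- in the fiber over a fixed point of $\partial D$. This choice lets me lift the standard generators $\mathcal{B}_B=\{x_1,y_1,\dots,x_h,y_h\}$ of $\pi_1(\Sigma_h)$ to based loops in $X$ and makes the monodromy act on the fiber group $\pi_1(F)=\pi_1(\Sigma_g)$, with its standard presentation $\langle \mathcal{B}_F \mid \prod_{i=1}^g[a_i,b_i]\rangle$, by the honest automorphisms $(A_j)_*$ and $(B_j)_*$ rather than merely outer ones.

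First I would handle the bundle part. Since $\Sigma_h^1$ deformation retracts onto a wedge of $2h$ circles, $\pi_1(\Sigma_h^1)$ is free on $\mathcal{B}_B$, so the homotopy exact sequence $1 \to \pi_1(\Sigma_g) \to \pi_1(f^{-1}(\Sigma_h^1)) \to \pi_1(\Sigma_h^1) \to 1$ splits --- the section provides an explicit splitting. Hence $\pi_1(f^{-1}(\Sigma_h^1))$ is the semidirect product of $\pi_1(\Sigma_g)$ by the free group on $\mathcal{B}_B$, in which $x_j$ and $y_j$ act on $\mathcal{B}_F$ by $(A_j)_*$ and $(B_j)_*$. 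Reading off this semidirect product gives the presentation with generators $\mathcal{B}_F\cup\mathcal{B}_B$ and relations $\prod_{i=1}^g[a_i,b_i]=1$ together with the conjugation relations of part~(iii).

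Next I would handle the piece over the disk. The restriction $f^{-1}(D)\to D$ is a Lefschetz fibration over a disk with vanishing cycles $c_1,\dots,c_\ell$, and its standard handle decomposition (see \cite{GompfStipsicz}) builds $f^{-1}(D)$ from $\Sigma_g\times D^2$ by attaching, for each critical value, one $2$--handle along the vanishing cycle $c_i$ pushed into a nearby fiber. As attaching a $2$--handle along a circle $\gamma$ only quotients $\pi_1$ by the normal closure of $[\gamma]$ (the framing being immaterial for $\pi_1$), we get $\pi_1(f^{-1}(D)) = \pi_1(\Sigma_g)/\langle\langle c_1,\dots,c_\ell\rangle\rangle$, where each $c_i$ is the word in $\mathcal{B}_F$ recorded in~\eqref{eq:monodromy}.

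Finally I would glue the two pieces. The group $\pi_1(N)$ is generated by $\mathcal{B}_F$ together with the class $t$ of $\partial D$ traversed along the section. Under $N\hookrightarrow f^{-1}(\Sigma_h^1)$ the generators $\mathcal{B}_F$ map to themselves and $t\mapsto\prod_{j=1}^h[x_j,y_j]$, since $\partial D$ is homotopic in $\Sigma_h^1$ to that product of commutators and the section carries this homotopy; under $N\hookrightarrow f^{-1}(D)$ the generators $\mathcal{B}_F$ map to their images in the above quotient and $t\mapsto 1$, since $\partial D$ bounds the disk $S_1\cap f^{-1}(D)$. By the van Kampen pushout, $\pi_1(X)$ is obtained from the presentation of $\pi_1(f^{-1}(\Sigma_h^1))$ by further imposing $c_i=1$ for $i=1,\dots,\ell$ (from amalgamation along $\mathcal{B}_F$) and $\prod_{j=1}^h[x_j,y_j]=1$ (from amalgamation along $t$), which is precisely the asserted presentation. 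I do not expect a genuine obstacle here; the only real care needed is bookkeeping of base points and connecting paths, so that the relations in~(iii) involve the literal induced automorphisms (which depend on these choices only up to a global inner automorphism absorbed into the generators) and so that $\partial D$ is matched with $\prod_j[x_j,y_j]$ consistently from both sides --- with the hypothesis $b>0$ entering twice, to split the bundle's homotopy sequence and to kill $\partial D$ over $D$.
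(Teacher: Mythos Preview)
Your proposal is correct and follows essentially the same approach the paper sketches: decompose $X$ into the Lefschetz fibration over $D$ and the $\Sigma_g$--bundle over $\Sigma_h^1$, read off $\pi_1$ of the first from its handle decomposition and of the second from the (split) homotopy exact sequence, and then glue via Seifert--van~Kampen. The paper only indicates these two ingredients without spelling out the amalgamation, so your write-up is a faithful fleshing-out of exactly what the authors had in mind.
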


\noindent Here each vanishing cycle $c_i \subset \Sigma_g$, taken with an auxiliary orientation and a base path, is represented in the basis $\mathcal{B}_F$, whereas $(A_j)_*$ and $(B_j)_*$ are the $\pi_1(\Sigma_g)$ automorphisms induced by any representatives of $A_j$ and $B_j$ in $\D(\Sigma_g)$.

Lastly, we record a few key relations \cite{FarbMargalit, KorkmazOzbagci} in $\M(\Sigma_g^b)$ we are going to use repeatedly:\footnote{In fact, together with the elementary relators $t_\alpha t_\alpha^{-1}=1$ for any $\alpha$, and $t_\alpha=1$ for any null-homotopic curve $\alpha$, the three types of relations we list here generate all other relations among Dehn twists in $\M(\Sigma_g^b)$.}

\noindent \textit{Braid relation}:  
		$$t_{\alpha} t_{\beta} = t_{t_{\alpha} (\beta)} t_{\alpha} $$
for any simple closed curves $\alpha$, $\beta$ on $\Sigma_g^b$. In particular, the braid relation implies that $t_{\alpha} t_{\beta} = t_{\beta} t_{\alpha} $ when $\alpha$ and $\beta$ are disjoint and that $t_{\alpha} t_{\beta} t_{\alpha} = t_{\beta} t_{\alpha} t_{\beta} $ when $\alpha$ and $\beta$ intersect transversely at one point.
We will freely use braid relations in our calculations without mentioning. 

\smallskip
\noindent \textit{Lantern relation}: 
		$$t_{\alpha} t_{\beta} t_{\gamma}= t_{\delta_1} t_{\delta_2} t_{\delta_3} t_{\delta_4} $$
		for the curves in Figure~\ref{F:LanternRelation}.

\smallskip
\noindent \textit{Four-holed torus relation}:  
		$$(t_{\alpha_1} t_{\alpha_3} t_{\beta} t_{\alpha_2} t_{\alpha_4} t_{\beta})^2 = t_{\delta_1} t_{\delta_2} t_{\delta_3} t_{\delta_4} $$ 
		for the curves in Figure~\ref{F:4holedTorusRelation}.

\begin{figure}[htbp]
	\centering
	\subfigure[$\Sigma_0^4$ with boundary $\{ \delta_1, \delta_2, \delta_3, \delta_4 \}$.\label{F:LanternRelation}]
	{\hspace{25pt}
		\includegraphics[height=90pt, trim=0 -10 0 10]{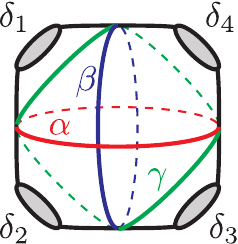} 
		\hspace{25pt}} 
	\hspace{1em}	
	\subfigure[$\Sigma_1^4$ with boundary $\{ \delta_1, \delta_2, \delta_3, \delta_4 \}$.\label{F:4holedTorusRelation}]
	{\hspace{10pt}
		\includegraphics[height=110pt]{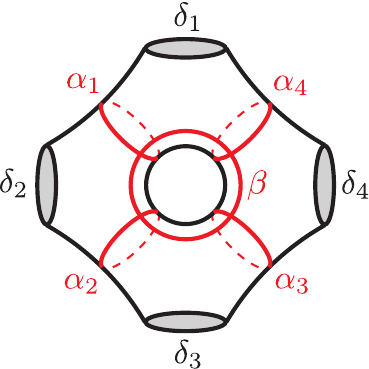}
		\hspace{10pt}} 
	\caption{Curve configurations for the lantern and the four-holed torus relations.  Each one is viewed on a subsurface of $\Sigma_g^b$.} \label{F:KeyRelations}
 \end{figure}

\smallskip
\subsection{Symplectic $4$--manifolds and surgeries along tori} \

By the Gompf-Thurston construction, any Lefschetz fibration $(X,f)$ with a non-torsion fiber class $[F]$ in $H_2(X)$ (e.g. when the fiber genus $g \neq 1$ or there is a section) admits a symplectic structure $\omega$ with respect to which the fibers and any preassigned collection of disjoint sections are symplectic; moreover, any such $\omega_1$ and $\omega_2$ are deformation equivalent. Throughout the paper we equip our Lefschetz fibrations (with sections) always with a Gompf-Thurston symplectic form.

Let $(X,\omega)$ be a symplectic $4$--manifold and $T \subset X$ be an embedded Lagrangian torus in $X$. Take  the \emph{Lagrangian framing} $\nu T \cong T^2 \x D^2$ on the tubular neighborhood $\nu T$ of $T$, where each $T^2 \x \{ pt \}$ corresponds to a Lagrangian submanifold of $X$.  Given a curve $\gamma \subset T$ let $S^1_\gamma$ be a curve on $\partial \nu T \cong T^3$ parallel to $\gamma$ under the chosen framing. Let $\mu_T$ denote the meridian of $T$ in $\partial (\nu T)$. With these in mind, by a $(T, \gamma, p/q)$ surgery, we mean $X':= (X \setminus \nu T) \cup_\varphi (T^2 \x D^2)$, where the gluing map $\varphi\colon  \partial(T^2 \x  D^2) \to \partial (\nu T)$ satisfies $\varphi_*([\partial D^2])= p [\mu_T]+q[S^1_\gamma]$ in $H_1(X\setminus \nu T)$.  This is called a \emph{Luttinger surgery} when $p/q=1/m$, in which case the symplectic form on $X \setminus \nu T$ extends to a symplectic form $\omega'$ on $X'$ \cite{ADK}. 

Surgery along tori can change the fundamental group dramatically, which is well-exploited in the literature for symplectic $4$--manifolds containing surface products as codimension zero symplectic submanifolds. We note the following very useful technical lemma:

\begin{proposition}\label{prop:Luttinger}
Consider $X_0= \Sigma_g^1 \times \Sigma_1^1$, for $g \geq 2$,  with the product symplectic structure $\omega_0$. There are links of embedded Lagrangian tori $\mathcal{L}_i$ in $(X_0, \omega_0)$ and Luttinger surgeries along them, which result in symplectic $4$--manifolds $(X_i, \omega_i)$, for $i=1,2$, respectively, with the following property: There is a pair of curves $\{a, b\}$ on $\Sigma:=\Sigma_g^1 \times \{ pt \}$ represented by some $\{\alpha, \beta\}$ in $\pi_1(X_i)$, such that the quotient of $\pi_1(X_i)$ by the normal closure of $\beta$ is a cyclic group generated by the image of $\alpha$ under the quotient map,  where
\begin{enumerate}
[\rm{(}a\rm{)}]
\item for $i=1$, $a$ and $b$ intersect transversally once in $\Sigma$,
\item for $i=2$, $a$ and $b$ are disjoint and together do not separate $\Sigma$.
\end{enumerate}
In particular, $\pi_1(X_i)$ is the normal closure of $\{\alpha, \beta\}$.
\end{proposition}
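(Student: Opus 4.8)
The plan is to obtain $(X_i,\omega_i)$ from $X_0=\Sigma_g^1\times\Sigma_1^1$ by a carefully chosen link of Luttinger surgeries along \emph{product} Lagrangian tori, and to arrange the commutator relations these introduce into $\pi_1$ so that normally killing one well-chosen generator collapses the group to a cyclic one. Write $\pi_1(X_0)=F_{2g}\times F_2=\langle a_1,b_1,\dots,a_g,b_g\rangle\times\langle c,d\rangle$, with $\{a_1,b_1,\dots,a_g,b_g\}$ the standard generators carried by $\Sigma=\Sigma_g^1\times\{pt\}$ (each $a_i$ meeting $b_i$ once, all other pairs disjoint) and $\{c,d\}$ coming from the $\Sigma_1^1$ factor. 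For simple closed curves $\gamma_1\subset\Sigma_g^1$ and $\gamma_2\subset\Sigma_1^1$, the torus $\gamma_1\times\gamma_2$ is Lagrangian for $\omega_0$ (a split symplectic form restricts to zero on a product of two curves), and its Lagrangian framing is the product framing. Two such tori $\gamma_1\times\gamma_2$ and $\gamma_1'\times\gamma_2'$ are disjoint once $\gamma_1\cap\gamma_1'=\emptyset$ \emph{or} $\gamma_2\cap\gamma_2'=\emptyset$; since a surface with boundary carries arbitrarily many disjoint parallel copies of a given curve, I would build each link from such tori using fresh parallel copies of the factor curves, which makes the link automatically embedded with pairwise disjoint components. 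Choosing $pt\in\Sigma_1^1$ off every $\Sigma_1^1$-curve used keeps $\Sigma$ disjoint from the link, so $\Sigma$, and hence the curves representing $\alpha,\beta$ based at $(\ast,pt)$, survives into $X_i$.

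Next I would record the effect of one such surgery. Given $T=\gamma_1\times\gamma_2$, pick curves $\delta_1\subset\Sigma_g^1$ and $\delta_2\subset\Sigma_1^1$ dual to $\gamma_1,\gamma_2$; then $\delta_1\times\delta_2$ meets $T$ transversally once, so the meridian $\mu_T$ is represented, up to conjugation and inversion, by the commutator $[\delta_1,\delta_2]$ in $\pi_1(X_0\setminus T)$, while the framing pushoffs $S^1_{\gamma_1},S^1_{\gamma_2}$ represent conjugates of $\gamma_1,\gamma_2$. Hence a $(T,\gamma_j,1/1)$ Luttinger surgery ($j\in\{1,2\}$) adds to $\pi_1$ the relation $\gamma_j=[\delta_1,\delta_2]^{\pm1}$, up to conjugation, and the symplectic structure persists by \cite{ADK}. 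Applying van Kampen after all the surgeries of a link gives a presentation of $\pi_1(X_i)$ on $a_1,b_1,\dots,b_g,c,d$ consisting of these surgery relations together with the product-type relations inherited from $X_0$ (some of which get modified, consistently with the surgery relations). The relevant dual pairs are $a_i\leftrightarrow b_i$ in $\Sigma_g^1$ and $c\leftrightarrow d$ in $\Sigma_1^1$.

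It then remains to design the two links so the relations cascade. As $\pi_1(X_0)$ has $2g+2$ generators and we want to retain only $\alpha$ while normally killing one more generator $\beta$, each link should consist of exactly $2g$ tori, one per remaining generator. For (a) I would take $\alpha=a_1$, $\beta=b_1$ (these meet once on $\Sigma$) and let $\mathcal L_1$ comprise: $a_1\times c$ and $a_1\times d$ (parallel copies), surgered in the $c$- and $d$-directions respectively, yielding $c=[b_1,d]^{\pm1}$ and $d=[b_1,c]^{\pm1}$; and, for $i=2,\dots,g$, the tori $a_i\times c$ and $b_i\times c$, surgered in the $a_i$- and $b_i$-directions, yielding $a_i=[b_i,d]^{\pm1}$ and $b_i=[a_i,d]^{\pm1}$. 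Modulo $\langle\langle\beta\rangle\rangle=\langle\langle b_1\rangle\rangle$ the first two relations force $c=d=1$, after which each pair $\{a_i=[b_i,d]^{\pm1},\,b_i=[a_i,d]^{\pm1}\}$ forces $a_i=b_i=1$, leaving a group generated by the image of $\alpha$. For (b) I would take $\alpha=a_1$, $\beta=a_2$ (disjoint, and non-separating together in $\Sigma_g^1$ for $g\geq2$) and let $\mathcal L_2$ comprise: $b_2\times c$ and $b_2\times d$, surgered in the $c$- and $d$-directions ($c=[a_2,d]^{\pm1}$, $d=[a_2,c]^{\pm1}$); $b_1\times c$ surgered in the $b_1$-direction ($b_1=[a_1,d]^{\pm1}$); another parallel $b_2\times c$ surgered in the $b_2$-direction ($b_2=[a_2,d]^{\pm1}$); and, for $i=3,\dots,g$, the tori $a_i\times c$ and $b_i\times c$ surgered in the $a_i$- and $b_i$-directions ($a_i=[b_i,d]^{\pm1}$, $b_i=[a_i,d]^{\pm1}$). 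Modulo $\langle\langle a_2\rangle\rangle$ the relations cascade: first $c=d=1$, then $b_1=b_2=1$, then all $a_i=b_i=1$ for $i\geq3$, again leaving the group generated by the image of $\alpha$. In both cases $\pi_1(X_i)/\langle\langle\beta\rangle\rangle$ is cyclic generated by $\alpha$; since then every element of $\pi_1(X_i)$ agrees with a power of $\alpha$ modulo $\langle\langle\beta\rangle\rangle$, the group is the normal closure of $\{\alpha,\beta\}$.

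The main obstacle will be upgrading the schematic relations $\gamma_j=[\delta_1,\delta_2]^{\pm1}$ to genuine, correctly based relations: one must fix base paths, orientations and framings, compute the actual words for the pushoffs and meridians, and check that the ambient conjugating elements do not obstruct the cascade. They do not: every occurrence of $b_1$ (resp.\ $a_2$) in the cascade appears as a conjugate of $\beta$ and so dies in $\pi_1(X_i)/\langle\langle\beta\rangle\rangle$, while $\alpha=a_1$ never appears on the left of a surgery relation and the surviving product-type relations only assert that $\alpha$ commutes with, or is conjugate-related to, elements that become trivial in the quotient, so $\alpha$ is not killed. A routine secondary point, handled uniformly by the parallel-copies observation together with drawing the standard generators of $\Sigma_g^1$ in their usual position, is that the $2g$ tori of each link can be realized simultaneously as pairwise disjoint embedded Lagrangians.
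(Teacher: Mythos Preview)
Your proposal is correct and follows essentially the same approach as the paper: product Lagrangian tori in $\Sigma_g^1\times\Sigma_1^1$, Luttinger surgeries whose meridians are conjugates of commutators of the dual curves (the paper cites \cite{BaldridgeKirk} for this), and a cascade argument after normally killing $\beta$. Your Configuration~1 coincides with the paper's almost verbatim (with $c,d$ in place of the paper's $x,y$); your Configuration~2 uses a slightly different list of tori than the paper's, but the mechanism and the count ($2g$ tori) are the same, and your handling of the conjugation ambiguities---arguing that only vanishing modulo a normal closure is needed---is exactly the shortcut the paper takes as well.
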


\begin{proof}
We set our notation for the Luttinger surgeries and carry out the relevant fundamental group calculation, following the conventions in \cite{FPS,AkhmedovBaykurPark, BaldridgeKirk}. 

Consider the surface $\Sigma_g$ with the standard cell decomposition prescribed by a regular $4g$--gon with vertex $p$ and the edges labeled in the order $\Pi_{i=1}^g a_i b_i a^{-1}_i b^{-1}_i$ as we traverse the boundary. Similarly, take the standard cell decomposition of $\Sigma_1=T^2$ given by a square with vertex $q$ and  edges labeled as $x y x^{-1} y^{-1}$. Remove a small disk from the center of both to get $\Sigma_g^1$ and $\Sigma_1^1$. So $\{a_i, b_i\}_{i=1}^g$  and $\{x, y\}$ generate $\pi_1(\Sigma_g^1)$  and $\pi_1(\Sigma_1^1)$ at the base points $p$ and $q$, respectively. We calculate the fundamental group of the surgered $X_0$ at the base point $p \times q$.

For any curve $c \in \{a_i, b_i, x, y\}$, let $c', c''$ denote the two parallel copies of the curve $c$ on the same surface, as in \cite{FPS}[Figure~2]. Abusing the notation a tad, we denote any curve of the form $\gamma \x q$ or $p \x \gamma$ in $X_0=\Sigma_g^1 \x \Sigma_1^1$ also by $\gamma$. As mentioned earlier, we encode the (Luttinger) surgery information by the triple $(T, \gamma, k)$. 

We have two different configurations to consider:

\noindent{\underline{\textit{Configuration~1}}:} Perform the following Luttinger surgeries:
\[ 
(a_1' \x x', x', 1), \,
(a''_1 \x y', y', 1),  \,
(a_i' \x x', a_i', 1),  \,
(b_i' \x x'', b_i', 1)  \text{ for } i=2, \ldots, g \, .\]
Note that we have the following collection of mutually disjoint tori, where each torus intersects the link $\mathcal{L}_1$ of the above Lagrangian tori in $(X_0, \omega_0)$ transversally at one point along a distinct component:
\[ 
(b_1' \x y''), \,
(b''_1 \x x''),  \,
(b_i'' \x y'),  \,
(a_i'' \x y'')  \text{ for } i=2, \ldots, g \, .\]
Invoking the work of Baldridge and Kirk in \cite{BaldridgeKirk}, we deduce that $\pi_1(X_1)$ has a presentation with generators $\{a_1, b_1, \ldots, a_g, b_g, x, y\}$, among which the following relations hold: 
\[
 \mu \,  x = \mu' \, y  = \mu_i \, a_i = \mu'_i \,  b_i =1, \text{ for } i=2, \ldots, g,
\]
where $\mu, \mu', \mu_i, \mu'_i$ are the meridians of the surgered Lagrangian tori  given by conjugates of commutators of the pairs $\{b_1, y\}$, $\{b_1, x\}$, $\{b_i, y\}$, $\{a_i, y\}$, respectively.

When we set $b_1=1$, the first two commutators expressing $\mu$ and $\mu'$ become trivial, so the relators $\mu \, x =1$ and $\mu' \, y =1$ imply that $x=y=1$. Since the others are commutators of $y$, it follows that they are also trivial. From the remaining relators, we then deduce that $a_i=b_i=1$ for all $i=2, \ldots, g$. Hence, all the generators but the image of $a_1$ become trivial in 
$\pi_1(X_1) / N (b_1)$.

Taking the curves $a$ and $b$ in the isotopy classes of the geometrically dual pair $a_1$ and $b_1$ on $\Sigma:=\Sigma_g^1 \x q$, we complete the proof of the first case.

\smallskip
\noindent{\underline{\textit{Configuration~2}}:} This time, we perform the following Luttinger surgeries:
\[ 
(b_2'' \x x', x', 1), \,
(a'_1 \x y', y', 1),  \,
(a_i' \x y', a_i', 1),  \,
(b_j' \x y'', b_j', 1)  \text{ for }  i=3, \ldots, g \,\text{ and }j=1, \ldots, g .\]
We have the following collection of (mutually disjoint) dual tori in the complement of the link $\mathcal{L}_2$ of the above Lagrangian tori:
\[ 
(a_2'' \x y'), \,
(b''_1 \x x'),  \,
(b_i'' \x x'),  \,
(a_j'' \x x'')  \text{ for }  i=3, \ldots, g \,\text{ and }j=1, \ldots, g .\]
By \cite{BaldridgeKirk} again, we deduce that $\pi_1(X_2)$ has a presentation with generators $\{a_1, b_1, \ldots, a_g, b_g, x, y\}$, among which the following relations hold: 
\[
 \mu \,  x = \mu' \, y  = \mu_i \, a_i = \mu'_j \,  b_j =1, \text{ for } i=3, \ldots, g \text{ and }  j=1, \ldots, g \, ,
\]
where $\mu, \mu', \mu_i, \mu'_j$ are the meridians of the surgered Lagrangian tori  given by conjugates of commutators of the pairs $\{a_2, y\}$, $\{b_1, x\}$, $\{b_i, x\}$, $\{a_j, x\}$, respectively.

When we set $a_2=1$, the first commutator expressing $\mu$ becomes trivial, so the relator $\mu \, x =1$ implies that $x=1$. Since all the others are commutators of $x$, they are trivial as well. By the remaining relations above, we deduce that $y=a_i=b_j=1$ for all $i=3, \ldots, g$ and $j=1, \ldots g$. Hence, all the generators but the image of $a_1$ become trivial in 
$\pi_1(X_2) / N (a_2)$. 

This time we take the curves $a$ and $b$ in the isotopy classes of $a_1$ and $a_2$ on $\Sigma$, respectively.\end{proof}

\begin{remark} \label{rk:Luttinger}
Following \cite{BaldridgeKirk, FPS}, we could spell out the exact commutator expressions for the meridians in the above proof, but it should be evident at this point that for our calculations, there is no need for this. The surgeries in the proofs of Proposition~\ref{prop:Luttinger}(a) and (b) are generalizations of the surgeries employed in the proofs of \cite[Theorem~3]{BaykurKorkmaz} and ~\cite[Theorem~D]{BaykurHamada}, respectively; it may be helpful to pin-point to the interested reader how Luttinger surgeries in product surfaces boil down the business of killing the fundamental group via torus surgeries to killing just two generators.

Perhaps a more subtle point here is that one can see,  for instance, by examining the handle diagrams for $(X_0, \mathcal{L}_i)$, once the relators we listed suffice to show that we get an abelian group, then they generate all the remaining relators among the given generators $\{ a_1, b_1, \ldots, a_g, b_g, x, y\}$ in $\pi_1(X_i)$. This observation shows that $\pi_1(X_i)/ N(b)$ is infinite cyclic, which one can utilize effectively in constructions of new exotic $4$--manifolds with cyclic fundamental groups. 
\end{remark}

%======================================================================================

\smallskip
\subsection{Spin structures on Lefschetz fibrations over higher genera surfaces} \
%======================================================================================

An oriented $n  \geq 1$ dimensional manifold $M$ admits a spin structure if and only if the bundle $TM \oplus \epsilon^k$, where $\epsilon$ is the trivialized real line bundle, admits a trivialization over the $1$--skeleton of $M$ that extends over its $2$--skeleton.   Equivalently, when the second Stiefel-Whitney class $\omega_2(M)=0$. 
A \emph{spin structure} on $M$ is a homotopy class of such trivializations, and whenever $\omega_2(M)=0$, the set of spin structures on $M$, denoted by $\Spin(M)$, is parameterized by $H^1(M; \Z_2)$.

One can naturally assign to each $s \in \Spin(\Sigma_g)$ a \emph{quadratic form} $q \colon H_1(\Sigma_g;\Z_2) \to \Z_2$ with respect to the intersection pairing, i.e., $q(a+b)=q(a)+q(b)+a \cdot b$ for all $a, b \in H_1(\Sigma_g;\Z_2)$. This assignment gives a one-to-one correspondence between $\Spin(\Sigma_g)$ and the set of such quadratic forms. 
Given a spin structure $s \in \Spin(\Sigma_g)$, we define the \textit{spin mapping class group} $\M(\Sigma_g,s)$ to be the subgroup of $\M(\Sigma_g)$ that consists  of the stabilizers of $s$, or equivalently, of the corresponding quadratic form $q$,  in $\M(\Sigma_g)$. An elementary but useful fact to bear in mind is that a non-separating Dehn twist $t_c$ is in $\M(\Sigma_g,s)$ if and only if $q(c)=1$.

Next, we provide a sufficient condition for the total space of a Lefschetz fibration over $\Sigma_h$ to admit a spin structure, in terms of its monodromy factorization being contained in a spin mapping class group. When $h=0$, the statement below follows from Stipsicz's main theorem in \cite{StipsiczSpin} (see \cite{BaykurHayanoMonden}), and our proof is a generalization of his arguments.

\begin{proposition} \label{prop:spin}
Let $(X,f)$ be a genus $g \geq 2$ Lefschetz fibration with  a monodromy factorization 
\[ t_{c_1} \cdots t_{c_\ell} \, [A_1, B_1] \cdots [A_h, B_h] =1 \  \text{ in } \M(\Sigma_g)  \]
with each $c_i$ non-separating,  and a section of even self-intersection. If there exists a spin structure $s \in \rm{Spin}(\Sigma_g)$ such that $t_{c_i}, A_j, B_j \in \M(\Sigma_g, s)$ for all $i=1, \ldots, \ell$ and $j=1, \ldots, h$, then $X$ admits a spin structure.
\end{proposition}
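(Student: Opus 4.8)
The strategy is to build the spin structure on $X$ from a chosen spin structure on a reference fiber $\Sigma_g$, handle by handle, following the handle decomposition of $X$ dictated by the monodromy factorization. Write $X$ as the union of a Lefschetz fibration over a disk $D \subset \Sigma_h$ containing all critical values, glued to the $\Sigma_g$--bundle over $\Sigma_h^1 = \Sigma_h \setminus D$. Building $X$ from the fiber over a point, the $1$-- and $2$--handles of the base $\Sigma_h$ contribute the ``bundle part'' (encoded by the $A_j, B_j$), while the Lefschetz critical points contribute $2$--handles attached along the vanishing cycles $c_i$ with framing one less than the fiber framing. So it suffices to trivialize $TX$ over the $1$--skeleton and check the obstruction to extending over each $2$--cell.

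\textbf{Key steps.} First I would fix $s \in \mathrm{Spin}(\Sigma_g)$ with corresponding quadratic form $q$, as in the hypothesis, together with the even section $S$; the even self-intersection of $S$ is what lets one extend the fiberwise spin structure across a neighborhood of $S$, exactly as in Stipsicz's argument for $h=0$. Second, over $\Sigma_h^1$ the fibration is a $\Sigma_g$--bundle with monodromy $A_j, B_j \in \M(\Sigma_g, s)$; since each of these mapping classes preserves $q$ (hence $s$), the fiberwise spin structures patch together over the $1$--skeleton of $\Sigma_h^1$, and because the structure group of $TX$ restricted here splits as (vertical) $\oplus$ (horizontal pullback from $\Sigma_h^1$) and both pieces are spin over the $1$--skeleton, $TX$ admits a trivialization over the $1$--skeleton of this part; the $2$--cell relation $\prod[x_j,y_j]=1$ of $\pi_1(\Sigma_h^1)$ carries no $2$--cell obstruction since $\Sigma_h^1$ is homotopy equivalent to a wedge of circles, so the only genuine $2$--cells to worry about are those attached along the $c_i$ and the one capping off $\partial \Sigma_h^1$ (which is absorbed into $D$). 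Third, for each Lefschetz $2$--handle attached along $c_i$: the obstruction to extending the spin trivialization over the corresponding $2$--cell is measured precisely by $q(c_i) \in \Z_2$ — this is the standard fact that a Lefschetz $2$--handle can be spin-filled iff its vanishing cycle is ``odd'' for the quadratic form — and the hypothesis $t_{c_i} \in \M(\Sigma_g, s)$ gives $q(c_i)=1$, killing the obstruction. Finally, assembling: $\omega_2(X)$ is the total obstruction, a sum of the contributions over all $2$--cells, and each contribution vanishes, so $\omega_2(X)=0$ and $X$ is spin.

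\textbf{Main obstacle.} The delicate point is not the Lefschetz handles — that part is essentially Stipsicz's computation — but rather making precise the interaction between the spin structure on the vertical tangent bundle and the global structure of $X$ over a surface of positive genus, and in particular verifying that no extra obstruction is introduced by the base handles of $\Sigma_h$. Concretely, one must check that the horizontal distribution (or the pullback of $T\Sigma_h$) contributes nothing to $\omega_2$ beyond what the monodromy preserving $s$ already controls, and that the role of the even section is genuinely to rigidify the choice near $\partial$; this is where a careful bundle-theoretic argument — decomposing $TX \cong T^vX \oplus f^*T\Sigma_h$ and tracking $w_2$ additively using $w_2(T^vX)$ together with the (even) normal bundle of the section — replaces the $h=0$ picture. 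I expect this to go through by the same mechanism as in \cite{StipsiczSpin, BaykurHayanoMonden}, with the monodromy hypothesis $A_j, B_j \in \M(\Sigma_g, s)$ precisely ensuring the vertical spin structure is globally well-defined over the $1$--skeleton of $\Sigma_h$.
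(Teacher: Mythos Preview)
Your outline is aligned with the paper's proof in its broad strokes---build a spin structure on the fibration over $\Sigma_h^1$ using that $A_j,B_j$ preserve $s$, extend over each Lefschetz $2$--handle because $q(c_i)=1$, and invoke the even section at the end---but you have misidentified where the real difficulty lies, and the step you label the ``main obstacle'' is not the one that needs work.

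The base $1$--handles cause no trouble at all: the paper simply puts the product spin structure on $W_0=D^2\times\Sigma_g$ and on each of the $2h$ copies of $D^2\times\Sigma_g$ that build the bundle over $\Sigma_h^1$, and observes that the attaching maps $\mathrm{id}\times\phi_j$, $\mathrm{id}\times\psi_j$ are spin maps precisely because $A_j,B_j\in\M(\Sigma_g,s)$. No obstruction computation or bundle splitting is needed there. The genuine issue is the \emph{other} $2$--cell you mention and then set aside: closing up the base from $\Sigma_h^1$ to $\Sigma_h$, i.e., extending the spin structure you have built on $W_1$ (the fibration over $\Sigma_h^1$) across the remaining regular fiber neighborhood $W_2\cong\Sigma_g\times D^2$. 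Nothing in your monodromy hypotheses controls this extension, and your proposed splitting $TX\cong T^vX\oplus f^*T\Sigma_h$ (which in any case fails at the critical points) does not see the section at all, so it cannot supply the missing input.

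The paper resolves this with Stipsicz's trick, which is the actual content of the even-section hypothesis: take $N$ to be a plumbed neighborhood of a regular fiber $F$ together with the section $S$. Since both have even normal bundle, $w_2(N)=0$; since the intersection form on $N$ is unimodular, the restriction map $H^1(N;\Z_2)\to H^1(\partial N;\Z_2)$ is surjective, so \emph{every} spin structure on $\partial N$ extends over $N$. Now $X\setminus N\subset W_1$ already carries the spin structure you constructed, and its restriction to $\partial N$ therefore extends across $N$, finishing the proof. Your description of the section's role (``rigidify the choice near $\partial$'', ``extend across a neighborhood of $S$'') does not capture this mechanism; without it the argument is incomplete.
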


\begin{proof}
Let $(X,f)$ be a Lefschetz fibration with a monodromy factorization as above, where all $t_{c_i}, A_j, B_j \in \M(\Sigma_g, s)$. We can assume that $\ell+h>0$; otherwise, the total space of a trivial $\Sigma_g$--bundle  over $S^2$ is certainly spin. 
Let $q$ be the quadratic form associated to $s$.

Let $\rho$ be the $\frac{2\pi}{\ell+4h}$--radian counter-clockwise rotation of $S^1$ and $\rm{conj} \colon S^1 \to S^1$ be the complex conjugation. Take a small interval $I$ of radians in $(0, \frac{2 \pi}{\ell+4h}) \subset S^1 = \partial D^2$.  Consider the following disjoint collection of subintervals on $S^1$:
$I_i=\rho^i(I)$, for $i=1, \ldots, \ell$, and
$J_j= \rho^{\ell+4j-3}(I)$,  $J'_j=\rho^{\ell+4j-1}(I)$,
$K_j=\rho^{\ell+4j-2}(I)$ and
$K'_j=\rho^{\ell+4j}(I)$, for $j=1, \ldots, h$.

Let $W_2$ be a regular neighborhood of a fiber, so $(X,f)$ decomposes as a Lefschetz fibration $W_1 \rightarrow \Sigma_h^1$ and $W_2$. Here $W_1$ can be obtained from the trivial fibration $W_0:=D^2 \x \Sigma_g \rightarrow D^2$ by attaching $\ell$ many (Lefschetz) $2$--handles and $2h$ copies of $D^2 \x \Sigma_g$ to $\partial W_0$ as follows: Each Lefschetz $2$--handle is attached along $\{ pt\} \x c_i \subset I_i \x \Sigma_g$ with framing one less than the fiber framing. Whereas the copies of $D^2 \x \Sigma_g$ are attached via the map  
\[
\rm{id}_{\, J_j \x \Sigma_g} \sqcup \ \rm{id}_{\, J'_{j}}\x \phi_j \colon  (J_j  \x \Sigma_g) \sqcup (J'_{j} \x \Sigma_g) \rightarrow (J_j  \x \Sigma_g) \sqcup (J'_{j} \x \Sigma_g)
\] 
or the map 
\[
\rm{id}_{\, K_j \x \Sigma_g} \sqcup \ \rm{id}_{\, K'_{j}}\x \psi_j \colon  (K_j  \x \Sigma_g) \sqcup (K'_{j} \x \Sigma_g) \rightarrow (K_j  \x \Sigma_g) \sqcup (K'_{j} \x \Sigma_g)
\]
after pre-composing with $r \times \rm{id}_{\Sigma_g}$, for some reflection $r$
of $S^1$ switching the pair of relevant intervals $\{J_j, J'_j\}$ or $\{K_j, K'_j\}$, where $\phi_j, \psi_j$ run over  the isotopy classes of $A_j, B_j$, for all $j= 1, \ldots, h$. \linebreak (Of course, we attach all of these in an order dictated by the monodromy factorization.) 

In the arguments below, we repeatedly fall back on the following observations: A submanifold $N$ of a spin manifold $M$ inherits a  spin structure when it is of codimension zero, as well as when it has trivialized normal bundle, which is always the case for the boundary or a fiber. Given any two spin manifolds $(M_i, \mathfrak{s}_i)$ with codimension zero submanifolds $N_i \subset \partial M_i$, the manifold with boundary $M_1 \cup_\varphi M_2$ given by an orientation-reversing diffeomorphism $\varphi\colon N_1 \to N_2$ admits a natural spin structure restricting to $\mathfrak{s}_i$ on each $M_i$, provided $\varphi$ is a spin map.

Now, let $\mathfrak{s}_0 \in \Spin(D^2 \x \Sigma_g)$ be the product of the unique spin structure on $D^2$ and $s$ on $\Sigma_g$. Since $q(c_i)=1$ for each $i$, as noted in \cite{StipsiczSpin}, we can extend $\mathfrak{s}_0$ on $W_0$ over any $2$--handle (with its unique spin structure) we attach along $c_i$ with one off than the fiber framing. Similarly,  since we equip each copy of $D^2 \x \Sigma_g$ we attach to $W_0$ with the same spin structure $\mathfrak{s}_0$,  we get spin attaching maps  $\rm{id}_{\, J_j \x \Sigma_g} \sqcup \ \rm{id}_{\, J'_{j}}\x \phi_j$ and $\rm{id}_{\, K_j \x \Sigma_g} \sqcup \ \rm{id}_{\, K'_{j}}\x \psi_j$. It follows that we get a spin structure $\mathfrak{s}_1$ on the union of all, which makes up $W_1$. Note that $\mathfrak{s}_1$ restricts to the fibers of the Lefschetz fibration $W_1 \to \Sigma_h^1$ as $s$, that is,  we have a spin fibration. When does it extend to one over the closed surface $\Sigma_h$?

We use Stipsicz's trick: Let $F$ denote the regular fiber of $(X,f)$ and $S$ be a section with even self-intersection. Let $N$ be a fibered neighborhood of the two, a plumbing of $D^2$--bundles over $F$ and $S$ with even Euler numbers, so $\omega_2(N)=0$. Moreover, because the intersection form on $N$ is unimodular, from the long exact sequence of the pair, one sees that $\iota^* \colon H^1(N; \Z_2) \to H^1(\partial N; \Z_2)$ is surjective, where $\iota \colon \partial N \hookrightarrow N$
is the inclusion map. Thus, when $\omega_2(N)=0$, which is the case here, any spin structure on $\partial N$ extends to one on $N$. In particular, the restriction of $\mathfrak{s}_1$ on $X \setminus N \subset W_1$ induces a spin structure on $\partial N$ that we can extend to all of $X$.
\end{proof}

\smallskip
\section{Signature zero Lefschetz fibrations over the $2$--torus}  \label{Sec:fibrations}
%======================================================================================

In this section, we are going to build signature zero symplectic Lefschetz fibrations over the $2$--torus with a few nodes,  first with non-spin, and then with spin monodromies.   The total spaces of these symplectic fibrations are the ``kernels'' of the symplectic models for the exotic signature zero $4$--manifolds we will produce in the next section.

\subsection{Non-spin families.} \
%======================================================================================

We first prove the following theorem, which constitutes (the non-spin) half of Theorem~B:

\begin{theorem} \label{thm:nonspinLFs}
For each $g \geq 4$,  there are infinitely many genus--$g$ Lefschetz fibrations over the $2$--torus with only four nodes and non-spin monodromy, whose total spaces are signature zero, non-spin, minimal symplectic \mbox{$4$--manifolds,} which are  not homotopy equivalent to each other, or to any  complex surface. 
\end{theorem}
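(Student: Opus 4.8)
The plan is to extract everything from a single explicit positive factorization in a mapping class group. \emph{Step 1 (the factorization).} For each $g\ge 4$ and $n\in\N$ I would produce a relation
\[
 t_{c_1}\,t_{c_2}\,t_{c_3}\,t_{c_4}\,[A,B_n] \;=\; t_{\delta_1}^{k_1}\cdots t_{\delta_b}^{k_b}\qquad\text{in }\ \M(\Sigma_g^b),
\]
with $b\ge 1$, non-separating vanishing cycles $c_1,\dots,c_4$, and only the factor $B_n$ of the single commutator depending on $n$ (for instance $B_n$ being the composition of a fixed mapping class with the $n$-th power of a Dehn twist along a curve disjoint from everything else in play), so that the members of the family differ only in the ``bundle part'' of the fibration. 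Concretely I would begin with a $\Sigma_g$--bundle over $T^2$ given by a commuting pair of mapping classes with a prescribed homological action, and then insert into its factorization a suitable relator made of four positive Dehn twists and commutators — built from the lantern, chain, and four-holed-torus relations — creating exactly four Lefschetz critical points; throughout I would track the vanishing cycles so that they remain non-separating and no fiber contains a sphere of square $-1$. Relative minimality is then automatic, and by the Gompf--Thurston construction the total space $X_{g,n}$ is a symplectic $4$--manifold whose fiber and sections are symplectic.

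\emph{Step 2 (basic topology).} From the handle picture $\chi(X_{g,n})=4(g-1)(1-1)+4=4$, and the signature is computed to be zero — either by the Endo--Nagami algorithm \cite{Endo,EndoNagami}, tracking the (canceling) contributions of the lantern/chain substitutions used to derive the factorization, or by Ozbagci's algorithm \cite{Ozbagci} applied to the explicit handle decomposition. Proposition~\ref{prop:pi1} with $h=1$ then computes $\pi_1(X_{g,n})$: the relators $c_i=1$ together with the relations $x\,c\,x^{-1}=A_*(c)$ collapse most of $\mathcal{B}_F$, and the residual relations $y\,c\,y^{-1}=(B_n)_*(c)$ leave an explicit group $G_n$, the construction being arranged so that, for fixed $g$, the groups $G_n$ are pairwise non-isomorphic — distinguished by the torsion of $H_1$ — while $b_1(G_n)=1$ for every $n$. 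Finally $X_{g,n}$ is non-spin: the monodromy lies in no spin mapping class group, because some $c_i$ has $q(c_i)=0$ for every $s\in\Spin(\Sigma_g)$; and reading off an embedded surface of odd self-intersection from the handle decomposition shows $w_2(X_{g,n})\ne 0$.

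\emph{Step 3 (minimality, irreducibility, rigidity).} Since the fibration is relatively minimal with fiber genus $g\ge 4$ and four critical points, Usher's theorem gives that $X_{g,n}$ is a minimal symplectic $4$--manifold. As $b_2^+(X_{g,n})=1+b_1(G_n)=2>1$, the nonvanishing $\SW_{X_{g,n}}\ne 0$ (Taubes), the vanishing of Seiberg--Witten invariants on connected sums with both summands having $b_2^+>0$, and Donaldson's diagonalization theorem together force $X_{g,n}$ to be irreducible: a decomposition splitting off a nontrivial negative-definite summand would exhibit a symplectic $(-1)$--sphere, contradicting minimality. For fixed $g$, the $X_{g,n}$ all have $\chi=4$ and $\sigma=0$ but pairwise non-isomorphic $H_1$, hence are pairwise non-homotopy-equivalent, and in particular pairwise non-diffeomorphic. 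Moreover, no $X_{g,n}$ is homotopy equivalent to a complex surface $S$: such an $S$ would have $\chi_{\mathrm{top}}=4$, $\sigma=0$ and $b_1=1$, hence (odd $b_1$) would be non-Kähler; the non-minimal case is excluded by additivity of the signature under blow-up, and no minimal non-Kähler complex surface has $(\chi_{\mathrm{top}},\sigma,b_1)=(4,0,1)$, as one checks against the Enriques--Kodaira classification using Noether's congruence $c_1^2+c_2\equiv 0\pmod{12}$ and the fact that minimal surfaces with odd $b_1$ have non-positive signature.

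\emph{Main obstacle.} The genuinely hard, creative step is the first one: compressing the fibration down to \emph{exactly four} nodes while keeping the monodromy non-spin, making the residual fundamental groups $G_n$ genuinely distinct (and, with a view to Theorem~A, killable by a uniform family of Luttinger surgeries), and holding the signature at zero. Satisfying all of these constraints simultaneously inside $\M(\Sigma_g^b)$ — above all the economy of only four critical points — is what forces the delicate interplay of the lantern and four-holed-torus relations and the deliberate choice of vanishing cycles (and, for the spin half of Theorem~B, of working inside a spin mapping class group); everything downstream is standard machinery (Endo--Nagami, Usher, Taubes, Freedman, Enriques--Kodaira).
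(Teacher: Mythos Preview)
Your outline has the right overall shape, but two concrete claims in Step~2 are incompatible with the setup and break Step~3.

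First, the assertion $b_1(G_n)=1$ cannot hold. Any Lefschetz fibration $f\colon X\to T^2$ with connected fibers induces a surjection $\pi_1(X)\twoheadrightarrow\pi_1(T^2)\cong\Z^2$, so $b_1(X)\ge 2$; in particular $b_1$ cannot be odd. (In the paper's explicit family one finds $b_1(X_{g,n})=2g-4$ for $n\neq 0$.) This collapses your ``not complex'' argument, which hinges on $b_1$ being odd to force non-K\"ahlerness and then run the Enriques--Kodaira classification. The paper bypasses this entirely: it uses Gieseker's finiteness of deformation classes of general type surfaces with fixed $(c_1^2,\chi_h)$, so among the infinitely many homotopy types $X_{g,n}$ only finitely many can be complex, and one passes to a subfamily.

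Second, your non-spin argument is internally inconsistent. You stipulate that all $c_i$ are non-separating, but then claim that ``some $c_i$ has $q(c_i)=0$ for every $s\in\Spin(\Sigma_g)$.'' Since $q(c)$ depends only on $[c]\in H_1(\Sigma_g;\Z_2)$ and the quadratic forms act transitively on nonzero classes in each Arf orbit (and in particular realize both values on any nonzero class), $q(c)=0$ for all $s$ forces $[c]=0$, i.e.\ $c$ is separating. The paper in fact \emph{uses} a separating vanishing cycle ($v_1$): the resulting reducible fiber has a component of self-intersection $-1$, which directly exhibits an odd intersection form and hence $w_2\neq 0$. If you insist on all non-separating $c_i$, you need a completely different source for an odd class, and none is visible from your description.

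Finally, note that the actual construction of the factorization (your Step~1) is the heart of the matter: the paper starts from a specific genus--$2$ pencil relation in $\M(\Sigma_2^4)$, embeds it into $\M(\Sigma_4^1)$ by tubing boundary components, and then produces an explicit diffeomorphism $\phi_n$ (varying with $n$) that absorbs the negative boundary twists into a single commutator. This is not a matter of ``inserting a relator built from lantern/chain relations'' into a bundle; the delicate point is arranging the topological types of the curve quadruples so that a single $\phi_n$ exists taking one to the other.
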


All the fibrations in the theorem will be presented with explicit positive factorizations.

\begin{proof}
\,Here is an outline of our proof: We will first build a genus--$4$ Lefschetz fibration over the \mbox{$2$--torus} with only four nodes,  non-spin monodromy,  signature zero and  a self-intersection zero section.  To do so, we will present a positive factorization for a genus--$2$ pencil,  and then embed it into a genus--$4$ surface in a  way that will make it possible to absorb all the negative Dehn twists (which come from the boundary multi-twist in the pencil monodromy) into a single commutator in $\M(\Sigma_4^1)$.   After that,  we will vary the same construction to get an infinite family of genus--$4$ fibrations, with distinct first homology groups, and thus, with homotopy inequivalent total spaces.  Finally,  by embedding the corresponding positive factorizations into $\M(\Sigma_g)$,  we will get the promised examples for all genus $g \geq 4$.

\smallskip
\noindent
\underline{A genus--$2$ relation}:
We have the following positive factorization for a genus--$2$ pencil with four base points on the elliptic ruled surface $T^2 \twprod S^2$, which was obtained by the second author in~\cite{Hamada}:
\begin{align} \label{eq:genus2MatsumotoRelationIIB}
	\DT{B_{0,1}}\DT{B_{1,1}}\DT{B_{2,1}}\DT{C_1}
	\DT{B_{0,2}}\DT{B_{1,2}}\DT{B_{2,2}}\DT{C_2} = 
	\DT{\delta_1}\DT{\delta_2}\DT{\delta_3}\DT{\delta_4},
\end{align}
where the Dehn twist curves are as shown in Figure~\ref{fig:Genus2Matsumoto_LiftIIB}.  It is a lift of Matsumoto's genus--$2$ relation~\cite{Matsumoto}. The construction of this relation via the breeding technique is given in  our Appendix~A.

\begin{figure}[htbp]
	\centering
	\includegraphics[height=120pt]{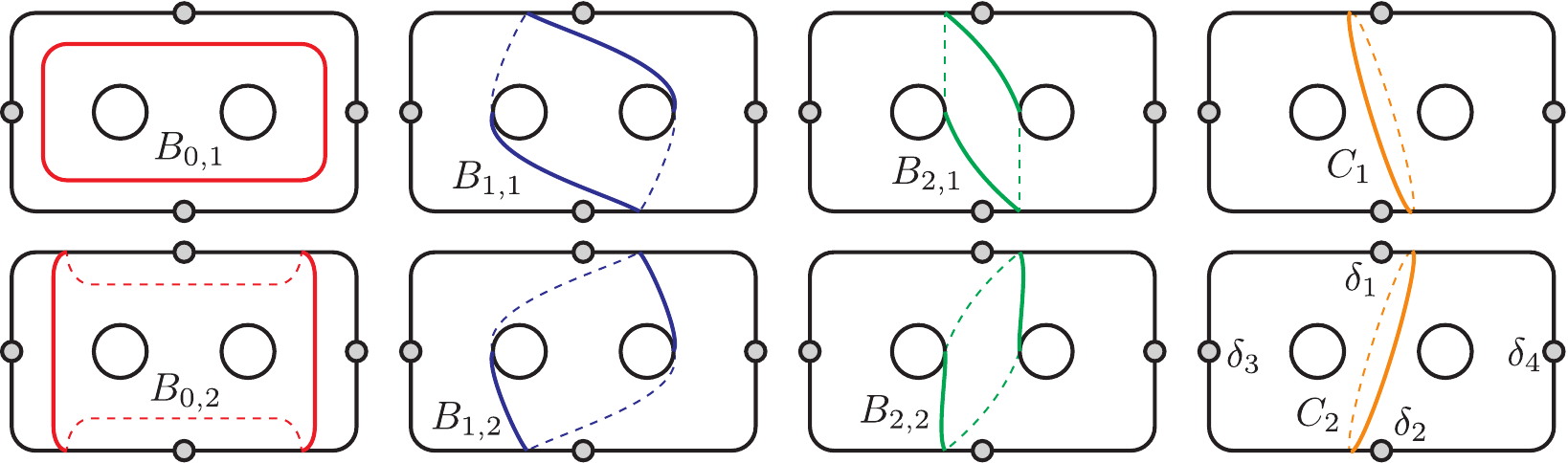}
\caption{The Dehn twist curves on $\Sigma_2^4$ for the lift of Matsumoto's genus--$2$ relation.} 
	\label{fig:Genus2Matsumoto_LiftIIB}
\end{figure}

In preparation for the next step, we modify the relation~\eqref{eq:genus2MatsumotoRelationIIB} as follows:
\begin{align*} 
	\DT{\delta_1}\DT{\delta_2}\DT{\delta_3}\DT{\delta_4} 
	&=
	\DT{B_{0,1}}\underline{\DT{B_{1,1}}\DT{B_{2,1}}\DT{C_1}
	\DT{B_{0,2}}}\DT{B_{1,2}}\DT{B_{2,2}}\DT{C_2} \\
	&=
	\underline{\DT{B_{0,1}}\DT{B_{2,1}}\DT{C_1}}
	\DT{B_{0,2}} \DT{b_1} \DT{B_{1,2}}\DT{B_{2,2}}\DT{C_2} \\
	&=
	\DT{u_2}\DT{v_2}\DT{B_{0,1}}\DT{B_{0,2}} \DT{b_1} \DT{B_{1,2}}\DT{B_{2,2}}\DT{C_2} \\
	&=
	\DT{B_{0,1}}\DT{B_{0,2}} \DT{b_1} \DT{B_{1,2}}\DT{B_{2,2}}\DT{C_2}\DT{u_2}\DT{v_2} \, ,
\end{align*}
where $b_1=t_{B_{0,2}}^{-1}t_{C_1}^{-1}t_{B_{2,1}}^{-1}(B_{1,1})$, $u_2=t_{B_{0,1}}(B_{2,1})$ and $v_2=t_{B_{0,1}}(C_1)$.
\begin{figure}[htbp]
	\centering
	\includegraphics[height=60pt]{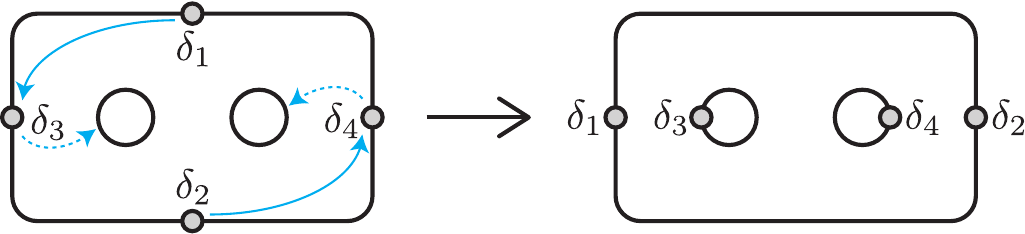}
	\caption{Dragging the boundary components of $\Sigma_2^4$.} 
	\label{fig:Genus2DraggingBoundary}
\end{figure}

By dragging the boundary components of $\Sigma_2^4$ as indicated in Figure~\ref{fig:Genus2DraggingBoundary} and relabeling the curves as $a_1:=B_{0,1}$, $a_2:=B_{0,2}$, $b_2:=B_{1,2}$, $u_1:=B_{2,2}$, and $v_1=C_2$, we have 
\begin{align} \label{eq:genus2relation}
	\DT{a_1}\DT{a_2}\DT{b_1}\DT{b_2}\DT{u_1}\DT{v_1}\DT{u_2}\DT{v_2} = 
	\DT{\delta_1}\DT{\delta_2}\DT{\delta_3}\DT{\delta_4}
\end{align}
with Dehn twist curves as shown in Figure~\ref{fig:Genus2Matsumoto_BoundingPairs}.

\begin{figure}[htbp]
	\centering
	\includegraphics[height=120pt]{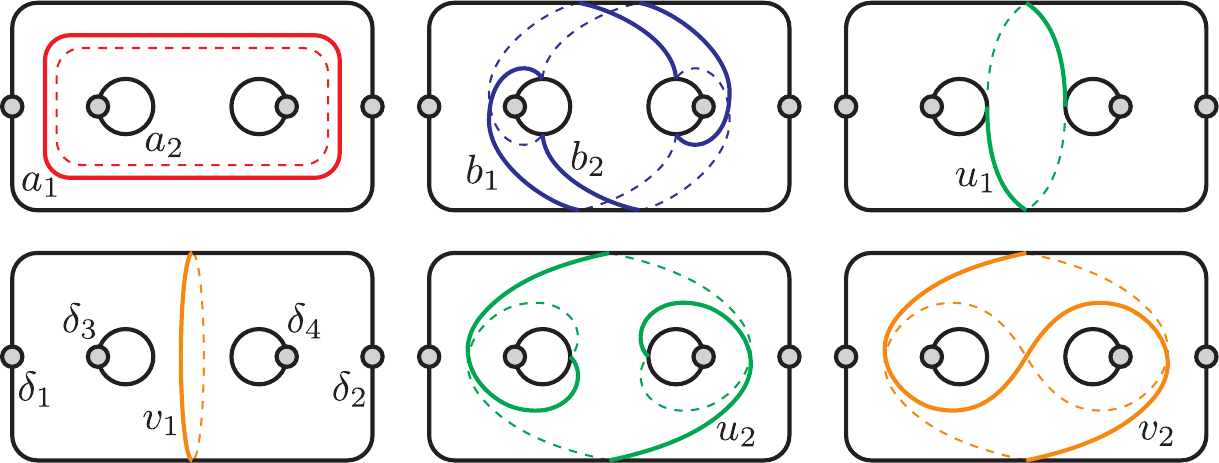}
\caption{The Dehn twist curves for the genus--$2$ pencil with two bounding pairs.} 
	\label{fig:Genus2Matsumoto_BoundingPairs}
\end{figure}
\noindent Note that  the relation~\eqref{eq:genus2relation}  contains the factors $t_{a_1}t_{a_2}$ and $t_{b_1}t_{b_2}$ which are multi-twists along the bounding  pairs $\{a_1,a_2\}$ and $\{b_1,b_2\}$.

\medskip
\noindent
\underline{A genus--$4$ Lefschetz fibration over $T^2$}:
\,We connect pairs of boundary components of the surface $\Sigma_2^4$ by adding cylinders,  while removing a disk from one of the tubes as illustrated in Figure~\ref{fig:Genus2intoGenus4Embedding}. We get a genus--$4$ surface with one boundary component.  Through such an embedding of $\Sigma_2^4$ into $\Sigma_4^1$, we can embed the relation~\eqref{eq:genus2relation} in $\M(\Sigma_2^4)$ into $\M(\Sigma_4^1)$ as
\begin{align} \label{eq:genus4relation}
	\DT{a_1}\DT{a_2}\DT{b_1}\DT{b_2}\DT{u_1}\DT{v_1}\DT{u_2}\DT{v_2}
	=
	\DT{d_1}\DT{d_2}\DT{d_3}\DT{d_4},
\end{align}
where the Dehn twist curves are given in Figure~\ref{fig:Genus4LFoverTorus}. 
Here, the images of the boundary curves $\delta_i$ are relabeled as $d_i$, whereas other curves are still denoted by the same letters.

\begin{figure}[htbp]
	\centering
	\includegraphics[height=60pt]{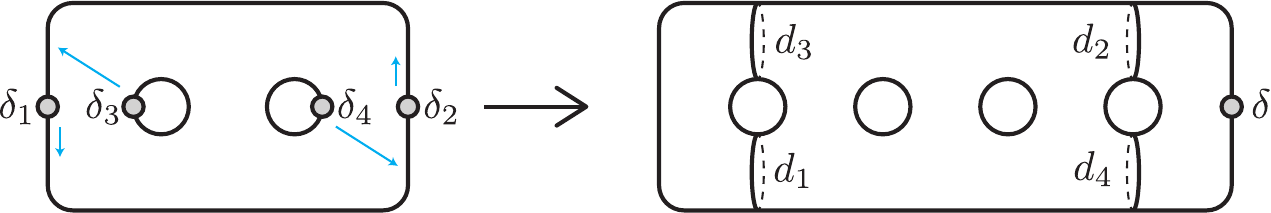}
	\caption{The embedding of $\Sigma_2^4$ into $\Sigma_4^1$; First drag boundary components of $\Sigma_2^4$ as indicated by the blue arrows. 
	Then cap the pairs $(\delta_1, \delta_3)$ and $(\delta_2,\delta_4)$ by tubes, respectively, while removing a disk from the second tube.
	}
	\label{fig:Genus2intoGenus4Embedding}
\end{figure}

\begin{figure}[htbp]
	\centering
	\includegraphics[height=200pt]{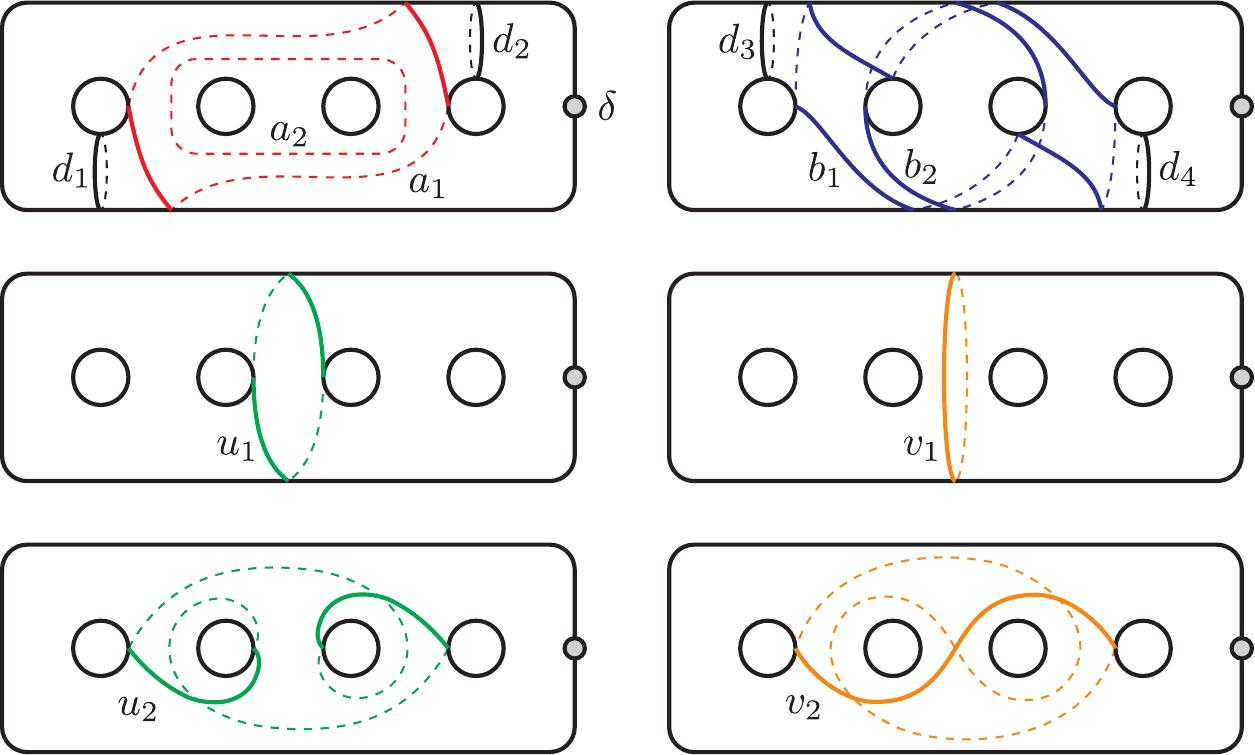}
	\caption{Embedded images of the curves for the genus--$2$ relation. The curves $u_1,v_1,u_2,v_2$ are the vanishing cycles of the Lefschetz fibration over $T^2$. }
	\label{fig:Genus4LFoverTorus}
\end{figure}

We claim that there is an orientation preserving self-diffeomorphism $\phi$ of $\Sigma_4^1$, which maps 
\[ (d_1,d_2,a_2,a_1) \mapsto (b_1,b_2,d_4,d_3). \]
This can be easily verified by looking at the complements of each quadruple of curves: Cutting $\Sigma_4^1$ along $\{d_1,d_2,a_2,a_1\}$ results in two disjoint surfaces $S_1 \cong \Sigma_0^4$ and $S_2 \cong \Sigma_1^5$. Similarly, cutting $\Sigma_4^1$ along $\{b_1,b_2,d_4,d_3\}$ yields two disjoint surfaces $S_1^\prime \cong \Sigma_0^4$ and $S_2^\prime \cong \Sigma_1^5$. All four subsurfaces have orientations induced  from $\Sigma_4^1$.
Now, there is an orientation-preserving diffeomorphism between $S_1$ and $S_1^\prime$ that exchanges the boundary components as we like. In particular, we can take it so that the ordered quadruple of the curves $(d_1,d_2,a_2,a_1)$ maps to $(b_1,b_2,d_4,d_3)$.
Likewise, we can find an orientation-preserving diffeomorphism between $S_2$ and $S_2^\prime$ that maps $(d_1,d_2,a_2,a_1)$ to $(b_1,b_2,d_4,d_3)$.
Combining the two diffeomorphisms, with smooth adjustment around $(b_1,b_2,d_4,d_3)$, gives us the desired diffeomorphism of $\Sigma_4^1$.

Let us fix such a diffeomorphism $\phi$, and denote its isotopy class in $\M(\Sigma_4^1)$ by the same letter.
We can now modify the relation~\eqref{eq:genus4relation} as follows:
\begin{align*}
	1&=
	\DT{a_1}\DT{a_2}\LDT{d_2}\LDT{d_1} \cdot \DT{b_1}\DT{b_2}\LDT{d_4}\LDT{d_3} 
	\cdot 
	\DT{u_1}\DT{v_1}\DT{u_2}\DT{v_2} \\
	&=
	\DT{a_1}\DT{a_2}\LDT{d_2}\LDT{d_1} \cdot \DT{\phi(d_1)}\DT{\phi(d_2)}
	\LDT{\phi{(a_2)}}\LDT{\phi{(a_1)}} \cdot 
	\DT{u_1}\DT{v_1}\DT{u_2}\DT{v_2} \\
	&=
	\DT{a_1}\DT{a_2}\LDT{d_2}\LDT{d_1} \cdot \phi \ \DT{d_1}\DT{d_2}\LDT{a_2}
	\LDT{a_1} \phi^{-1}
	\cdot 
	\DT{u_1}\DT{v_1}\DT{u_2}\DT{v_2} \\
	&=
	[\DT{a_1}\DT{a_2}\LDT{d_2}\LDT{d_1}, \phi] 
	\cdot 
	\DT{u_1}\DT{v_1}\DT{u_2}\DT{v_2} \\
	&=
	\DT{u_1}\DT{v_1}\DT{u_2}\DT{v_2} [\DT{a_1}\DT{a_2}\LDT{d_2}\LDT{d_1}, \phi].
\end{align*}
For $\DT{\delta}$ denoting the Dehn twist along a boundary parallel curve, we have:
\begin{align} \label{eq:genus4LFoverTorusMonodromy}
	\DT{u_1}\DT{v_1}\DT{u_2}\DT{v_2} [\DT{a_1}\DT{a_2}\LDT{d_2}\LDT{d_1}, \phi] 
	 = \DT{\delta}^0 \, ,
\end{align}
which prescribes a genus--$4$ Lefschetz fibration over $T^2$, with four nodes  corresponding to the vanishing cycles $u_1,v_1,u_2,v_2$, and a section of self-intersection zero, corresponding to the trivial power of the boundary twist. 
Note that $v_1$ is a separating curve,  whereas $u_1, u_2$ and $v_2$ are not.

\medskip
\noindent
\underline{An infinite family of fibrations over $T^2$}:
We can vary our choice of the mapping class $\phi$ in the relation~\eqref{eq:genus4LFoverTorusMonodromy} in order to generate an infinite family of genus--$4$ Lefschetz fibrations over $T^2$ with the same properties.  This would also allow us to describe such $\phi$ explicitly.

For $n \in \mathbb{Z}$ we define $\phi_n \in \M(\Sigma_4^1)$ as
\begin{align*}
	\phi_n := (t_{2})^{n+1} t_{9} t_{5}^{-1}  t_{8}^{-1}  t_{7}^{-1}  t_{6}^{-1}  t_{1} t_{5} t_{4}^{-1}  t_{3}  t_{2}^{-1}  t_{1}^{-1} 
\end{align*}
where the Dehn twists $t_1, \ldots, t_9$ are along the curves labeled by $1, \ldots, 9$  in Figure~\ref{fig:Genus4LFoverTorus_Twistings}. By successively applying the Dehn twists in the definition of $\phi_n$, we see that $\phi_n$ maps $(d_1,d_2,a_2,a_1)$ to $(b_1,b_2,d_4,d_3)$. Note that the last factor $t_2^{n+1}$ is supported away from $ \{b_1,b_2,d_4,d_3\}$.

\begin{figure}[htbp]
	\centering
	\includegraphics[height=55pt]{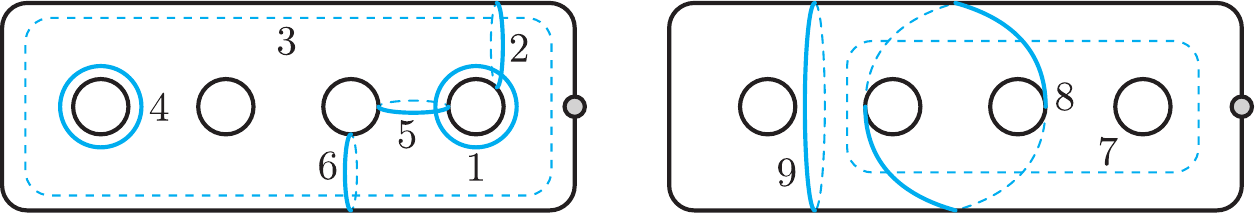}
	\caption{Dehn twist curves for defining $\phi_n$.}
	\label{fig:Genus4LFoverTorus_Twistings}
\end{figure}

So, in the relation~\eqref{eq:genus4LFoverTorusMonodromy} we can  replace $\phi$ by $\phi_n$,  and get a positive factorization
\begin{align} \label{eq:genus4LFoverTorusMonodromyInfinite}
	\DT{u_1}\DT{v_1}\DT{u_2}\DT{v_2} [\DT{a_1}\DT{a_2}\LDT{d_2}\LDT{d_1}, \phi_n ] 
	= \DT{\delta}^0 \, 
\end{align} 
in $\M(\Sigma_4^1)$,  which prescribes a genus--$4$ Lefschetz fibration $f_n \colon X_n \to T^2$,  with a section $s_n \subset X_n$ of self-intersection zero,  for each $n \in \Z$.

Moreover,  using any embedding  $\Sigma_4^1 \hookrightarrow \Sigma_g^1$,  we can view the positive factorizations
~\eqref{eq:genus4LFoverTorusMonodromyInfinite}  in $\M(\Sigma_g^1)$ instead,  for each $g \geq 4$.  Each one of these factorizations in $\M(\Sigma_g^1)$ now prescribes a genus--$g$ Lefschetz fibration \mbox{$f_{g,n} \colon X_{g,n} \to T^2$} with a section $s_{g,n} \subset X_{g,n}$ of self-intersection zero.  In this notation  $(X_n, f_n)=(X_{4,n}, f_{4,n})$.

For each fixed $g \geq 4$,  the total spaces in the family $\{ X_{g,n} \, | \, n \in \N \}$ will be pairwise homotopy inequivalent.  This  follows from them having distinct first homology groups, as we will show next.

\medskip
\noindent
\underline{First homology calculation}: Let us first compute  the first integral homology group of the total space $X_n$ of the genus--$4$ fibration $f_n$,  which we will do so using its monodromy factorization~\eqref{eq:genus4LFoverTorusMonodromyInfinite}.

\begin{figure}[htbp]
	\centering
	\includegraphics[height=65pt]{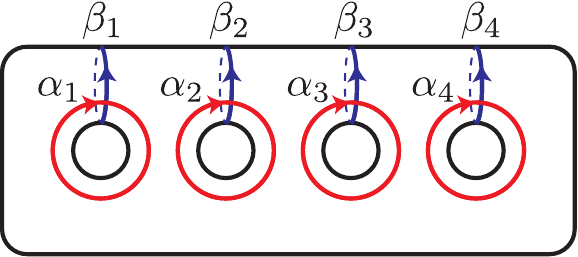}
	\caption{The generators for $H_1(\Sigma_4)$.}
	\label{fig:Genus4H1generators}
\end{figure}

Fix a geometric basis $\mathcal{B}:=\{ \alpha_1, \beta_1, \cdots, \alpha_4, \beta_4 \}$ for $H_1(\Sigma_4)$, where the oriented curves are as shown  in Figure~\ref{fig:Genus4H1generators}.
Then,  $H_1(X_n)$ is isomorphic to the quotient of $H_1(T^2) \oplus H_1(\Sigma_4)$ modulo the following types of relations induced by the monodromy of the fibration:
\begin{enumerate}[(i)]
	\item
$\eta=0$ for each  representative $\eta$ of the Lefschetz vanishing cycles $u_1, v_1, u_2$ and $v_2$ (taken with auxiliary orientations) expressed in the basis $\mathcal{B}$,  and
	\item
	$\DT{a_1}\DT{a_2}\LDT{d_2}\LDT{d_1}(\gamma) = \gamma$ and $\phi_n(\gamma) =\gamma$  for each $\gamma \in \mathcal{B}$.
\end{enumerate}

The latter can be calculated using the Picard-Lefschetz formula.  The following hold in $H_1(\Sigma_4)$:
\begin{align*}
	&u_1 = \beta_2 + \beta_3; &
	&v_1 = 0; \\
	&u_2 = -2\alpha_2 -2\alpha_3 +\beta_1 -\beta_2 -\beta_3 + \beta_4; &
	&v_2 = \beta_1 -\beta_4; \\  	
	&\DT{a_1}\DT{a_2}\LDT{d_2}\LDT{d_1}(\alpha_1) = \alpha_1 + \alpha_2 + \alpha_3 -\beta_4; & 
	&\DT{a_1}\DT{a_2}\LDT{d_2}\LDT{d_1}(\beta_1) = \beta_1; \\
	&\DT{a_1}\DT{a_2}\LDT{d_2}\LDT{d_1}(\alpha_2) = \alpha_2; &
	&\DT{a_1}\DT{a_2}\LDT{d_2}\LDT{d_1}(\beta_2) = 2\alpha_2 +2\alpha_3 -\beta_1 +\beta_2 -\beta_4; \\
	&\DT{a_1}\DT{a_2}\LDT{d_2}\LDT{d_1}(\alpha_3) = \alpha_3; &
	&\DT{a_1}\DT{a_2}\LDT{d_2}\LDT{d_1}(\beta_3) = 2\alpha_2 +2\alpha_3 -\beta_1 +\beta_3 -\beta_4; \\
	&\DT{a_1}\DT{a_2}\LDT{d_2}\LDT{d_1}(\alpha_4) = \alpha_2 + \alpha_3 + \alpha_4 -\beta_1; &
	&\DT{a_1}\DT{a_2}\LDT{d_2}\LDT{d_1}(\beta_4) = \beta_4; \\
	&\phi_n(\alpha_1) = \alpha_1; &
	&\phi_n(\beta_1) = -\alpha_2 -\alpha_3 +\beta_1 +\beta_2 +\beta_3 +\beta_4; \\
	&\phi_n(\alpha_2) = -\alpha_3 +\beta_2 +\beta_4; &
	&\phi_n(\beta_2) = \alpha_1 -\alpha_2 -\alpha_3 -\alpha_4 +\beta_2 +\beta_3 +(n+1)\beta_4; \\
	&\phi_n(\alpha_3) = \alpha_3 -\beta_2; &
	&\phi_n(\beta_3) = \alpha_1 -\alpha_2 -\alpha_3 -\alpha_4 +2\beta_3 +(n+1)\beta_4; \\
	&\phi_n(\alpha_4) = \alpha_1 -\alpha_2 -\alpha_3 +\beta_3 +\beta_4; &
	&\phi_n(\beta_4) = \alpha_2 +\alpha_3 -\beta_2 -\beta_3.
\end{align*}
Hence, we obtain the following nontrivial relations:
\begin{align}
	&\beta_2 + \beta_3 =0; \label{rel:genus4LF1}\\ 
	&-2\alpha_2 -2\alpha_3 +\beta_1 -\beta_2 -\beta_3 + \beta_4 =0; \label{rel:genus4LF2}\\
	&\beta_1 -\beta_4 =0; \label{rel:genus4LF3}\\
	&\alpha_1 + \alpha_2 + \alpha_3 -\beta_4 = \alpha_1; \label{rel:genus4LF4}\\
	&\alpha_2 + \alpha_3 + \alpha_4 -\beta_1 =\alpha_4; \label{rel:genus4LF5}\\ 
	&2\alpha_2 +2\alpha_3 -\beta_1 +\beta_2 -\beta_4 = \beta_2; \label{rel:genus4LF6}\\
	&2\alpha_2 +2\alpha_3 -\beta_1 +\beta_3 -\beta_4 = \beta_3; \label{rel:genus4LF7}\\
	&-\alpha_3 +\beta_2 +\beta_4 =\alpha_2; \label{rel:genus4LF8}\\
	&\alpha_3 -\beta_2 =\alpha_3; \label{rel:genus4LF9}\\
	&\alpha_1 -\alpha_2 -\alpha_3 +\beta_3 +\beta_4 =\alpha_4; \label{rel:genus4LF10}\\
	&-\alpha_2 -\alpha_3 +\beta_1 +\beta_2 +\beta_3 +\beta_4 =\beta_1; \label{rel:genus4LF11}\\
	&\alpha_1 -\alpha_2 -\alpha_3 -\alpha_4 +\beta_2 +\beta_3 +(n+1)\beta_4 =\beta_2; \label{rel:genus4LF12}\\
	&\alpha_1 -\alpha_2 -\alpha_3 -\alpha_4 +2\beta_3 +(n+1)\beta_4 =\beta_3; \label{rel:genus4LF13}\\
	&\alpha_2 +\alpha_3 -\beta_2 -\beta_3 = \beta_4. \label{rel:genus4LF14}
\end{align}
The relation~\eqref{rel:genus4LF9} gives $\beta_2=0$.
Then \eqref{rel:genus4LF1} yields $\beta_3=0$.
We have $\beta_1 =\beta_4$ from \eqref{rel:genus4LF3}, $\alpha_3 = -\alpha_2 +\beta_4$ from \eqref{rel:genus4LF4}.
By substituting $\beta_3=0$ and $\alpha_3 = -\alpha_2 +\beta_4$ in \eqref{rel:genus4LF10}, we obtain $\alpha_4=\alpha_1$.
Now we substitute $\beta_3=0$, $\alpha_3 = -\alpha_2 +\beta_4$, and $\alpha_4 = \alpha_1$ in \eqref{rel:genus4LF13}, which results in $n\beta_4=0$.
So we have the simplified relations
\begin{align*}
	\alpha_3 &= -\alpha_2 +\beta_4; \\
	\alpha_4 &= \alpha_1; \\
	\beta_1 &= \beta_4; \\
	\beta_2 &= 0; \\
	\beta_3 &= 0; \\
	n\beta_4 &=0.
\end{align*}
and in fact,  the above six relations generate all of the relations~\eqref{rel:genus4LF1}--\eqref{rel:genus4LF14}.
Therefore,
\begin{align}
	H_1(X_n) \cong H_1(T^2) \oplus \mathbb{Z}\left< \alpha_1, \alpha_2, \beta_4 \;|\; n\beta_4=0 \right> \cong  \mathbb{Z}^4 \oplus \mathbb{Z}/n\mathbb{Z}.
\end{align}

Finally, since the monodromy action of $\pi_1(T^2)$ on $H_1(\Sigma_g)$ is a trivial extension of its action on $H_1(\Sigma_4^1)$,  it follows from the  homology calculation above that 
\[ H_1(X_{g,n}) \cong \mathbb{Z}^{2g-4} \oplus \mathbb{Z}/n\mathbb{Z}.  \] 

\medskip
\noindent
\underline{Algebraic topology of $X_{g,n}$}: It is fairly easy to calculate several topological invariants of $X_{g,n}$ using the explicit positive factorization for $f_{g,n}$.  The Euler characteristic of $X_{g,n}$ is  given by 
\[\eu(X_{g,n})=\eu(\Sigma_g) \, \eu(T^2) + \ell= 0+ 4 = 4, \]
where $\ell$ is the number of nodes,  or equivalently, the number of Dehn  twists in the factorization.  

By the work of Endo and Nagami \cite{EndoNagami},  we can calculate the signature of $X_{g,n}$ by an algebraic count of the relations we have employed to derive the final positive factorization for the fibration $f_{g,n}$ in  $\M(\Sigma_g)$.  Embeddings of relations,  conjugations,  Hurwitz moves,  or re-expressing a subword as a commutator do not change the signature.  It follows that the signature of the positive factorization for $f_{g,n}$ is equal to the signature of the genus--$2$ relation~\eqref{eq:genus2MatsumotoRelationIIB} we started with.  Since the latter corresponds to a pencil on $T^2 \twprod S^2$,  which has  signature zero,  we conclude that $\sigma(X_{g,n})=0$. 

The remaining homology groups of the closed,  connected,  oriented $4$--manifold $X_{g,n}$,  as well as $b_2^+$ and $b_2^-$ are then determined by its Euler characteristic,  signature and first homology,  via Poincar\'{e} duality and the universal coefficient theorem.  In particular,  we get $H_2(X_{g,n})= \Z^{4g-6} \oplus \,\Z / n \Z$ and $b_2^{\pm}(X_{g,n})= 2g-3$ for $n \neq 0$,  whereas $H_2(X_{g,0})= \Z^{4g-8}$ and $b_2^{\pm}(X_{g,0})=2g-4$.  

Lastly,  since the monodromy factorization of $(X_{g,n}, f_{g,n})$ contains a Dehn twist along the separating curve $v_1$ (which remains separating after the embedding $\Sigma_4^1 \hookrightarrow \Sigma_g^1$),  the  components of the corresponding reducible fiber are surfaces of self-intersection $-1$.  Therefore,  $X_{g,n}$ has an odd intersection form and does not admit any spin structure. 

\medskip
\noindent
\underline{Differential topology of $X_{g,n}$}: We can equip  $(X_{g,n}, f_{g,n})$ with a Gompf--Thurston symplectic form,  with respect to which,   the fibers and the section $s_{g,n}$ are symplectic.  Since any relatively minimal Lefschetz fibration over a positive genus surface is minimal \cite{StipsiczMinimal}, each $X_{g,n}$ is a minimal symplectic $4$--manifold.  We have 
\[c_1^2(X_{g,n})= 2\,\eu(X_{g,n})+3\,  \sigma(X_{g,n})= 8 \ \text{ and } \  \chi_h(X_{g,n})=\frac{1}{4}(\eu(X_{g,n})+ \sigma(X_{g,n}))=1 \]
 for any $g \geq 4$ and  $n \in \Z$.  Since $X_{g,n}$ is minimal and  $c_1^2(X_{g,n})>0$,  the symplectic Kodaira dimension $\kappa(X_{g,n})=2$,  i.e.  $X_{g,n}$ is a minimal symplectic $4$--manifold of general type.  

For each fixed $g \geq 4$,  the total spaces in the family $\{ X_{g,n} \, | \, n \in \N \}$ are pairwise homotopy inequivalent,  because they have distinct first homology groups.  
Since there are only finitely many deformation  classes of compact complex surfaces of general type with the same $c_1^2$ and $\chi_h$  \cite{Gieseker}, we conclude that for all but finitely many $n \in \N$,   $X_{g,n}$ does not have the homotopy type of a complex surface.  Passing to an infinite countable subset of $\N$ if necessary,  and  relabeling the index set as $\N$ again,  we can then get an infinite family of minimal symplectic $4$--manifolds $\{X_{g,n} \,  | \, n \in \N \}$, where none of which are homotopy equivalent to any compact complex surface. 
\end{proof}

\smallskip
\subsection{Spin families} \
%======================================================================================

The next theorem constitutes the second (spin) half of Theorem ~B.

\begin{theorem} \label{thm:spinLFs}
For each $g \geq 5$,  there are infinitely many genus--$g$ Lefschetz fibrations over the $2$--torus with only four nodes and spin monodromy, whose total spaces are signature zero, spin, symplectic \mbox{$4$--manifolds,} which are  not homotopy equivalent to each other, or to any compact complex surface.
\end{theorem}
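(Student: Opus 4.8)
The plan is to mimic the structure of the proof of Theorem~\ref{thm:nonspinLFs}, but replacing the genus--$2$ relation~\eqref{eq:genus2MatsumotoRelationIIB} with a lift of Matsumoto's relation whose \emph{vanishing cycles and bounding pairs are all nonseparating curves on which a fixed quadratic form evaluates to~$1$}, so that after the embedding $\Sigma_2^4 \hookrightarrow \Sigma_g^1$ the entire positive factorization lands in a spin mapping class group $\M(\Sigma_g, s)$; Proposition~\ref{prop:spin} then equips the total space with a spin structure. Concretely, I would first produce, in $\M(\Sigma_2^4)$ (or directly in a somewhat larger surface), a positive factorization of a boundary multitwist whose left-hand factors can be grouped as Lefschetz twists $t_{u_1}t_{v_1}t_{u_2}t_{v_2}$ together with two bounding-pair multitwists $t_{a_1}t_{a_2}$ and $t_{b_1}t_{b_2}$, exactly as in~\eqref{eq:genus2relation}, but now arranging that $v_1$ is \emph{nonseparating} (this is why one needs $g \geq 5$ rather than $g \geq 4$: the extra genus gives room for all four vanishing cycles to be nonseparating and for the bounding pairs to bound a subsurface on which $q$ is consistent). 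The breeding/lift construction in Appendix~A should be adaptable to this end; the signature-zero property is inherited because, as in the previous proof, Endo--Nagami signature counts are invariant under embeddings of relations, conjugations, Hurwitz moves, and rewriting subwords as commutators, and the seed relation is still a lift of Matsumoto's relation realizing a pencil of signature zero.

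Next I would exhibit the symmetry diffeomorphism: find $\phi \in \M(\Sigma_g^1)$ sending $(d_1,d_2,a_2,a_1)$ to $(b_1,b_2,d_4,d_3)$ (verified, as before, by identifying the cut-open surfaces on both sides), and moreover choose $\phi$ to \emph{preserve $s$}, i.e.\ $\phi \in \M(\Sigma_g, s)$. Since $\phi$ can be written as a product of Dehn twists along curves $c$ with $q(c)=1$, this is a matter of checking that each twist curve used to build $\phi$ (and each of the $t_2^{n+1}$ stabilizing twists for the infinite family $\phi_n$) is $q$--admissible. Then the same telescoping computation as in~\eqref{eq:genus4LFoverTorusMonodromy} rewrites the relation as $t_{u_1}t_{v_1}t_{u_2}t_{v_2}\,[\,t_{a_1}t_{a_2}t_{d_2}^{-1}t_{d_1}^{-1},\,\phi_n\,]=t_\delta^{0}$, a positive factorization in $\M(\Sigma_g, s)$ of the required shape with four nodes and a self-intersection zero (hence even) section, so Proposition~\ref{prop:spin} applies and $Y_{g,n}$ is spin. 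The Euler characteristic is again $4$, the signature is $0$, and minimality follows from \cite{StipsiczMinimal}. A first-homology computation via Proposition~\ref{prop:pi1} (abelianized) and the Picard--Lefschetz formula — entirely parallel to the one in the previous proof, with the commutator factor $\phi_n$ contributing an $n\beta$--torsion relation — shows $H_1(Y_{g,n})$ contains a $\Z/n\Z$ summand whose order grows with $n$, so the $Y_{g,n}$ are pairwise homotopy inequivalent; Gieseker's finiteness \cite{Gieseker} for surfaces of general type with fixed $c_1^2=8$, $\chi_h=1$ then forces all but finitely many to be homotopy inequivalent to any complex surface, and passing to a subsequence reindexed by $\N$ finishes.

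The main obstacle I anticipate is the very first step: engineering a lift of Matsumoto's relation on a small-genus surface in which the seed has a \emph{spin} monodromy \emph{and} factors into four nodes plus two bounding pairs with a nonseparating ``separating-looking'' cycle $v_1$. In the non-spin proof one is free to let $v_1$ be a separating curve (which is exactly what kills spinness there), and the breeding was tuned for that; here every curve that appears — the eight twist curves of the genus--$2$ relation, the bounding pairs $a_1,a_2,b_1,b_2$, the vanishing cycles $u_1,v_1,u_2,v_2$, and all twist curves defining $\phi_n$ — must simultaneously be $q$--admissible for one fixed $s$, and one must check that the bounding pairs really do bound subsurfaces compatibly with $q$ (a Dehn twist along a separating curve is always in the spin mapping class group, but a \emph{bounding pair map} imposes a genuine condition). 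Verifying all of this is a finite but delicate bookkeeping exercise with the quadratic form, and getting it to close up is where $g \geq 5$ (as opposed to $g \geq 4$) becomes essential. Everything downstream — the $\phi$--symmetry, the telescoping, the homology and signature computations, minimality, and the complex-surface exclusion — is a routine transcription of the arguments already given for Theorem~\ref{thm:nonspinLFs}.
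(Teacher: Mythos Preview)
Your proposal has a genuine gap at precisely the point you flag as ``the main obstacle,'' and the paper's proof takes a different route that avoids it entirely.

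You propose to re-run the genus--$2$ Matsumoto seed from Theorem~\ref{thm:nonspinLFs} and engineer an embedding $\Sigma_2^4\hookrightarrow\Sigma_g^1$ in which the formerly separating vanishing cycle $v_1$ becomes nonseparating and every curve in sight is $q$--admissible. But this is left as an unverified hope: you neither exhibit such an embedding nor check that the bounding-pair/commutator architecture (the existence of $\phi$ sending $(d_1,d_2,a_2,a_1)$ to $(b_1,b_2,d_4,d_3)$) survives it, let alone that a single quadratic form simultaneously takes the value $1$ on all of $u_i,v_i,a_i,b_i,d_i$ and on every twist curve used to build $\phi_n$. Note too that in Matsumoto's genus--$2$ fibration the $C$--type curves are genuinely separating on $\Sigma_2$; making one nonseparating after embedding forces tubes to join the two sides of $v_1$, which immediately disturbs the cut-surface count that underpinned the existence of $\phi$ in the non-spin argument. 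So the ``finite but delicate bookkeeping exercise'' you defer is in fact the entire content of the spin construction, and without it there is no proof.

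The paper sidesteps all of this by abandoning the genus--$2$ Matsumoto relation altogether. It starts instead from an \emph{eight-holed torus relation} in $\M(\Sigma_1^8)$, a positive factorization for a genus--$1$ pencil on the spin manifold $S^2\times S^2$ with twelve twist factors and eight boundary twists. Embedding $\Sigma_1^8\hookrightarrow\Sigma_5^1$ by tubing the eight holes in four pairs, the paper groups eight of the positive twists with the eight (now interior) boundary twists into a single commutator $[\,t_{c_1}t_{c_2}t_{c_3}t_{c_4}t_{a_4}^{-1}t_{a_3}^{-1}t_{a_2}^{-1}t_{a_1}^{-1},\,\phi\,]$ via a symmetry $\phi$ sending each $(a_i,c_i)$ to $(\sigma_i,a_i)$; the remaining four twists $t_{d_3}t_{d_2}t_d t_{d_1}$ are the nodes. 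A further algebraic identity (Lemma~\ref{lem:commutatortrick}, $[A,B]x=x^A[A,x^{-1}B]$) is applied to trade $d_3$ for a curve $\tilde d$ so that the final vanishing cycles $d_2,d,d_1,\tilde d$ all satisfy $q=1$ for an explicitly defined quadratic form on $\Sigma_5$, and both commutator entries are products of twists along $q=1$ curves. The infinite family comes from $\phi_n=t_{a_1}^n\phi$ with $q(a_1)=1$. The $g\geq 5$ threshold arises because the eight-holed torus naturally fills a genus--$5$ surface, not because of any ad hoc spin constraint on a genus--$2$ seed. The downstream arguments (Euler characteristic, signature via Endo--Nagami, $H_1$ with a $\Z/n\Z$ summand, minimality, Gieseker finiteness) proceed as you outline.
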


As before,  all fibrations  in the theorem will be presented with explicit positive factorizations.  We will obtain our examples by refining a construction given by the second author in ~\cite{HamadaMinimal} to attain  \emph{spin} monodromies. 

\begin{proof}
\,We will first build a genus--$5$ Lefschetz fibration over the \mbox{$2$--torus} with only four nodes,  spin monodromy,  signature zero and  a self-intersection zero section, following \cite{HamadaMinimal}.  For this construction, we will take a positive factorization for a genus--$1$ pencil on the spin manifold $S^2 \x S^2$ with eight base points, and embed it into a genus--$5$ surface so that we can express all the negative Dehn twists (coming from the boundary multi-twist) and eight of the positive Dehn twists as a single commutator in $\M(\Sigma_5^1)$.  We will make all these choices carefully so that the resulting monodromy is spin.  Afterward, we will vary the same construction to get an infinite family of genus--$5$ spin fibrations, with homotopy inequivalent total spaces that can be distinguished by their first homology groups, and then get the higher genera examples by embedding the corresponding positive factorizations into $\M(\Sigma_g)$,  for each $g \geq 5$.  

\smallskip
\noindent
\underline{An eight-holed torus relation}:
We begin with the following version of an \textit{eight-holed torus relation} in \cite{HamadaMinimal}, where the Dehn twist curves are as depicted in Figure~\ref{fig:8HoledTorus}:
\begin{align} \label{eq:8holed1}
	\DT{c_1}\DT{c_2}\DT{c_3}\DT{c_4}\DT{\sigma_1}\DT{\sigma_2}\DT{\sigma_3}
	\DT{\sigma_4}\DT{d^{\prime\prime\prime}}\DT{d^{\prime\prime}}\DT{d^{\prime}}
	\DT{d}
	=
	\DT{\delta_1}\DT{\delta_1^\prime}\DT{\delta_2}\DT{\delta_2^\prime}\DT{\delta_3}\DT{\delta_3^\prime}
	\DT{\delta_4}\DT{\delta_4^\prime},
\end{align}
where $d^\prime = t_{e_1}^{-1}t_{e_3}^{-1}(d)$, $d^{\prime\prime} =  t_{e_2}^{-1}t_{e_4}^{-1} t_{e_1}^{-1}t_{e_3}^{-1}(d)$, and $d^{\prime\prime\prime} =  t_{e_2}^{-1}t_{e_4}^{-1} t_{e_1}^{-2}t_{e_3}^{-2}(d)$.
To make the curves less tangled we globally conjugate the positive factorization~\eqref{eq:8holed1} by $t_{e_1}t_{e_3}$ and get
\begin{align} \label{eq:8holed2}
	\DT{c_1}\DT{c_2}\DT{c_3}\DT{c_4}\DT{\sigma_1}\DT{\sigma_2}\DT{\sigma_3}
	\DT{\sigma_4}
	\DT{d_3}\DT{d_2}\DT{d}\DT{d_1}
	=
	\DT{\delta_1}\DT{\delta_1^\prime}\DT{\delta_2}\DT{\delta_2^\prime}\DT{\delta_3}\DT{\delta_3^\prime}
	\DT{\delta_4}\DT{\delta_4^\prime},
\end{align}
where $d_1=t_{e_1}t_{e_3}(d)$, $d_2= t_{e_2}^{-1}t_{e_4}^{-1}(d)$, and $d_3 = t_{e_1}^{-1}t_{e_2}^{-1}t_{e_3}^{-1}t_{e_4}^{-1}(d)$.

This eight-holed torus relation was derived using a single $2$--chain relation and seven lantern relations, so the signature of the positive factorization is $-7 + 7\cdot1 =0$.  While not necessary for our arguments to follow, one can moreover see that this is a pencil on  $S^2 \x S^2$:  The total space of any genus--$1$ Lefschetz \emph{pencil} is birationally equivalent to $\CP$, where a simple  Euler characteristic calculation shows that the total space of this pencil is either  $\CP \# \CPb$ or $S^2 \x S^2$.  Using~\cite{BaykurHayanoMonden}[Theorem~5.1] one can then check that the total space is the spin $4$--manifold $S^2 \x S^2$.\footnote{Whereas the eight-holed torus relation of Korkmaz and Ozbagci in \cite{KorkmazOzbagci} is in fact a pencil on $\CP \# \CPb$, so the two are certainly not Hurwitz equivalent; cf.\cite{HamadaHayano}.}

\begin{figure}[htbp]
	\centering
	\includegraphics[height=120pt]{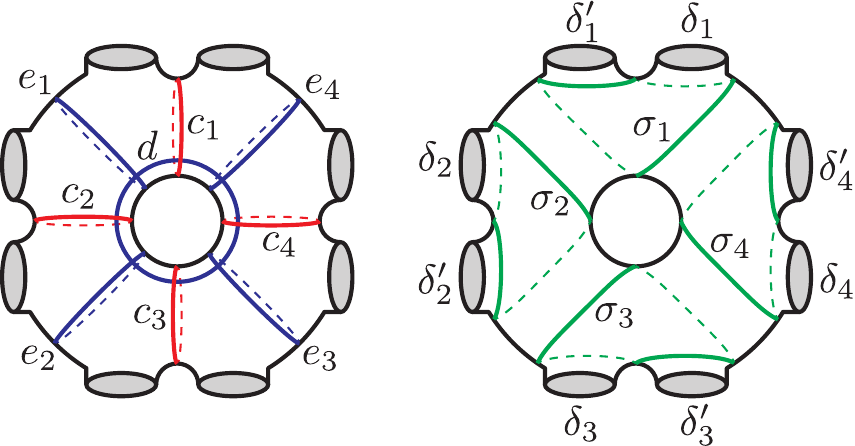}
	\caption{The Dehn twist curves for the eight-holed torus relation.} 
	\label{fig:8HoledTorus}
\end{figure}
\begin{figure}[htbp]
	\centering
	\includegraphics[height=135pt]{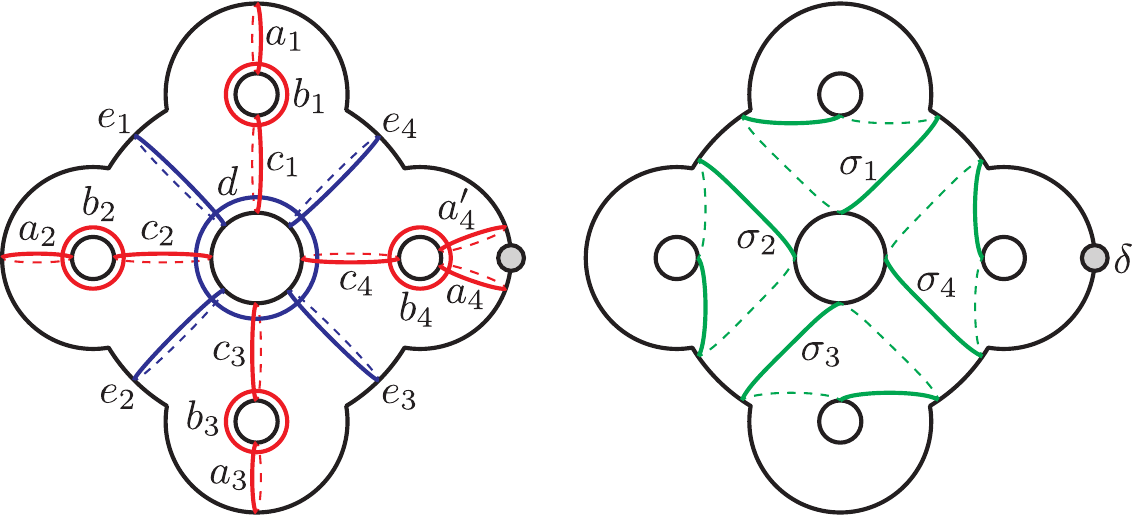}
	\caption{The embedding of the eight-holed  torus into the genus--$5$ surface (with one boundary) and the Dehn twist curves of the embedded relation.} 
	\label{fig:Genus5LFoverTorus}
\end{figure}

\smallskip
\noindent
\underline{A genus--$5$ Lefschetz fibration over $T^2$}:
We embed the eight-holed torus into $\Sigma_5^1$ by connecting each pair of boundary components $\delta_i$ and $\delta_i^\prime$ by  a cylinder for $i=1,2,3,4$ and then removing a disk from the  cylinder connecting $\delta_4$ and $\delta_4^\prime$ as illustrated in Figure~\ref{fig:Genus5LFoverTorus}.  The images of the curves of the eight-holed torus relation through this embedding can also be viewed in the same figure.  Thus, in $\M(\Sigma_5^1)$ we have
\begin{align} \label{eq:8holedtorusInGenus5}
	\DT{c_1}\DT{c_2}\DT{c_3}\DT{c_4}\DT{\sigma_1}\DT{\sigma_2}\DT{\sigma_3}
	\DT{\sigma_4}
	\DT{d_3}\DT{d_2}\DT{d}\DT{d_1}
	=
	\DT{a_1}^2\DT{a_2}^2\DT{a_3}^2\DT{a_4}\DT{a_4^\prime}.
\end{align}

We define $\tilde{\phi}_i := (t_{b_i}t_{a_i})^3(t_{c_i}t_{b_i}t_{a_i})^2$ for each 
$i=1,2,3$ and $\tilde{\phi}_4 := t_{b_4}t_{a_4^\prime}t_{b_4}t_{a_4}t_{b_4}t_{a_4^\prime}(t_{c_4}t_{b_4}t_{a_4})^2$ then $\phi := \tilde{\phi}_1\tilde{\phi}_2\tilde{\phi}_3\tilde{\phi}_4$.
It is easy to verify that $\phi$ maps $(a_i,c_i)$ to $(\sigma_i,a_i)$ for $i=1,2,3$ and $(a_4,c_4)$ to $(\sigma_4,a_4^\prime)$.
Then we modify the relation~\eqref{eq:8holedtorusInGenus5} as follows:
\begin{align*}
	1&=
	\DT{d_3}\DT{d_2} \DT{d}\DT{d_1} \cdot
	\DT{c_1}\DT{c_2}\DT{c_3}\DT{c_4} \LDT{a_4}\LDT{a_3}\LDT{a_2}\LDT{a_1}
	\cdot
	\DT{\sigma_1}\DT{\sigma_2}\DT{\sigma_3}\DT{\sigma_4} \LDT{a_4^\prime}\LDT{a_3}
	\LDT{a_2}\LDT{a_1} \\
	&= 
	\DT{d_3}\DT{d_2}\DT{d}\DT{d_1} \cdot
	\DT{c_1}\DT{c_2}\DT{c_3}\DT{c_4} \LDT{a_4}\LDT{a_3}\LDT{a_2}\LDT{a_1}
	\cdot
	\DT{\phi(a_1)}\DT{\phi(a_2)}\DT{\phi(a_3)}\DT{\phi(a_4)} \LDT{\phi(c_4)}
	\LDT{\phi(c_3)}\LDT{\phi(c_2)}\LDT{\phi(c_1)} \\
	&=
	\DT{d_3}\DT{d_2}\DT{d}\DT{d_1} \cdot
	\DT{c_1}\DT{c_2}\DT{c_3}\DT{c_4} \LDT{a_4}\LDT{a_3}\LDT{a_2}\LDT{a_1}
	\cdot
	\phi \,
	\DT{a_1}\DT{a_2}\DT{a_3}\DT{a_4} \LDT{c_4}\LDT{c_3}\LDT{c_2}\LDT{c_1}
	\phi^{-1} \\
	&=
	\DT{d_3}\DT{d_2}\DT{d}\DT{d_1} \cdot
	[\DT{c_1}\DT{c_2}\DT{c_3}\DT{c_4} \LDT{a_4}\LDT{a_3}\LDT{a_2}\LDT{a_1}, 
	\phi \,], 
\end{align*}
through commutativity relations, Hurwitz moves and re-expressing a subword as a commutator. 
Thus, we obtain the relation 
\begin{align} \label{eq:genus5LFovertorus_v1}
	\DT{d_3}\DT{d_2} \DT{d}\DT{d_1} 
	[\DT{c_1}\DT{c_2}\DT{c_3}\DT{c_4} \LDT{a_4}\LDT{a_3}\LDT{a_2}\LDT{a_1}, 
	\phi]
	 = \DT{\delta}^0
\end{align}
in $\M(\Sigma_5^1)$, which prescribes a genus--$5$ Lefschetz fibration $h \colon Y \to T^2$ with four nodes and a section of self-intersection zero.
Since we derived this positive factorization (using relations that do not affect the signature) from the eight-holed torus relation, which has signature zero,  we conclude the total space of our Lefschetz fibration has signature zero as well.

In order to have favorable vanishing cycles for the purposes of the next section, we will make one more modification to this monodromy. 
First, observe the following simple fact.
\begin{lemma} \label{lem:commutatortrick}
	Suppose that $G$ is a group and $x, A, B \in G$. Then, the relation
	$$[A,B]x=x^A [A,x^{-1}B]$$ 
	hold in $G$.
\end{lemma}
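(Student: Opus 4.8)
The plan is to prove Lemma~\ref{lem:commutatortrick} by a direct computation, expanding both sides with the conventions fixed earlier, namely $[A,B]=ABA^{-1}B^{-1}$ and $x^{A}=AxA^{-1}$. Since the assertion is about an arbitrary group $G$, no input about mapping class groups, Dehn twists, or Lefschetz fibrations is needed; it is a purely algebraic identity, so I would simply manipulate words.

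Concretely, I would start from the right-hand side and unwind the definitions:
\[
x^{A}\,[A,x^{-1}B] \;=\; \bigl(AxA^{-1}\bigr)\bigl(A\,(x^{-1}B)\,A^{-1}\,(x^{-1}B)^{-1}\bigr)
\;=\; AxA^{-1}\cdot A x^{-1} B A^{-1} B^{-1} x.
\]
The middle factor $A^{-1}A$ cancels, and then $x\,x^{-1}$ cancels, leaving $ABA^{-1}B^{-1}x=[A,B]x$, which is exactly the left-hand side. Equivalently, one can check $[A,B]x\,x^{-1}(x^{-1}B)^{A^{-1}}\cdots$ — but the straight expansion above is the cleanest route, and it is a one-liner once the bracket and conjugation conventions are substituted.

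There is essentially no obstacle here: the only point requiring care is adhering to the paper's conventions — that $x^{A}$ denotes $AxA^{-1}$ rather than $A^{-1}xA$, and that the commutator is $ABA^{-1}B^{-1}$ — since with the opposite conventions one would obtain the mirror identity. The real work is deferred to the application: later one takes $A=t_{c_1}t_{c_2}t_{c_3}t_{c_4}\LDT{a_4}\LDT{a_3}\LDT{a_2}\LDT{a_1}$, $B=\phi$, and $x$ one of the Dehn twists $\DT{d_3},\DT{d_2},\DT{d},\DT{d_1}$ in~\eqref{eq:genus5LFovertorus_v1}, and uses the lemma to push $x$ across the commutator at the cost of conjugating $x$ by $A$ and replacing $B$ by $x^{-1}B$, thereby rearranging the monodromy so that the four vanishing cycles become the ones desired for the next section. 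But the lemma itself needs only the short verification above.
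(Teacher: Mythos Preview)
Your proof is correct: the direct expansion of the right-hand side using the conventions $[A,B]=ABA^{-1}B^{-1}$ and $x^{A}=AxA^{-1}$ collapses immediately to $[A,B]x$, exactly as you wrote. The paper itself does not supply a proof of this lemma, simply presenting it as an elementary observation, so your one-line verification is entirely in keeping with how the authors treat it.

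One small remark on your closing commentary about the application: the paper applies the lemma only once, with $x=\DT{d_3}$ (after a cyclic permutation moves $\DT{d_3}$ to the right of the commutator), not separately for each of the four Dehn twists. This does not affect the lemma or your proof of it, but you may want to adjust that description if you keep it.
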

By a cyclic permutation, we rewrite the relation~\eqref{eq:genus5LFovertorus_v1} as 
\begin{align*} 
	\DT{d_2} \DT{d}\DT{d_1} 
	[\DT{c_1}\DT{c_2}\DT{c_3}\DT{c_4} \LDT{a_4}\LDT{a_3}\LDT{a_2}\LDT{a_1}, 
	\phi]
	\DT{d_3}
	= \DT{\delta}^0
\end{align*}
and apply Lemma~\ref{lem:commutatortrick} to obtain
\begin{align} \label{eq:genus5LFovertorus_spin}
	\DT{d_2}\DT{d}\DT{d_1}\DT{\tilde{d}} \;
	[\DT{c_1}\DT{c_2}\DT{c_3}\DT{c_4} \LDT{a_4}\LDT{a_3}\LDT{a_2}\LDT{a_1}, 
	\LDT{d_3}\phi]
	 = \DT{\delta}^0,
\end{align}
where 
\begin{align*}
	\tilde{d} 
	&=t_{c_1}t_{c_2}t_{c_3}t_{c_4} 
	t_{a_4}^{-1}t_{a_3}^{-1}t_{a_2}^{-1}t_{a_1}^{-1}(d_3) \\
	&=t_{c_1}t_{c_2}t_{c_3}t_{c_4} 
	t_{a_4}^{-1}t_{a_3}^{-1}t_{a_2}^{-1}t_{a_1}^{-1} 
	t_{e_1}^{-1}t_{e_2}^{-1}t_{e_3}^{-1}t_{e_4}^{-1}(d) \\
	&=t_{c_1}t_{c_2}t_{c_3}t_{c_4} 
	t_{e_1}^{-1}t_{e_2}^{-1}t_{e_3}^{-1}t_{e_4}^{-1}(d).
\end{align*}
The curve $\tilde{d}$ on the closed surface $\Sigma_5$ is depicted in Figure~\ref{fig:spinLFcurves}, together with $d$, $d_1$ and $d_2$. 
Notice that the vanishing cycles $d$ and $\tilde{d}$ are disjoint but not a bounding pair. One can see that the change from the monodromy factorization~\eqref{eq:genus5LFovertorus_v1} to~\eqref{eq:genus5LFovertorus_spin} is a variation of Hurwitz move that corresponds to a change in choices of paths in the Hurwitz system, hence this modification does not alter the Lefschetz fibration $h \colon Y \to T^2$.

\begin{figure}[htbp]
	\centering
	\includegraphics[height=100pt]{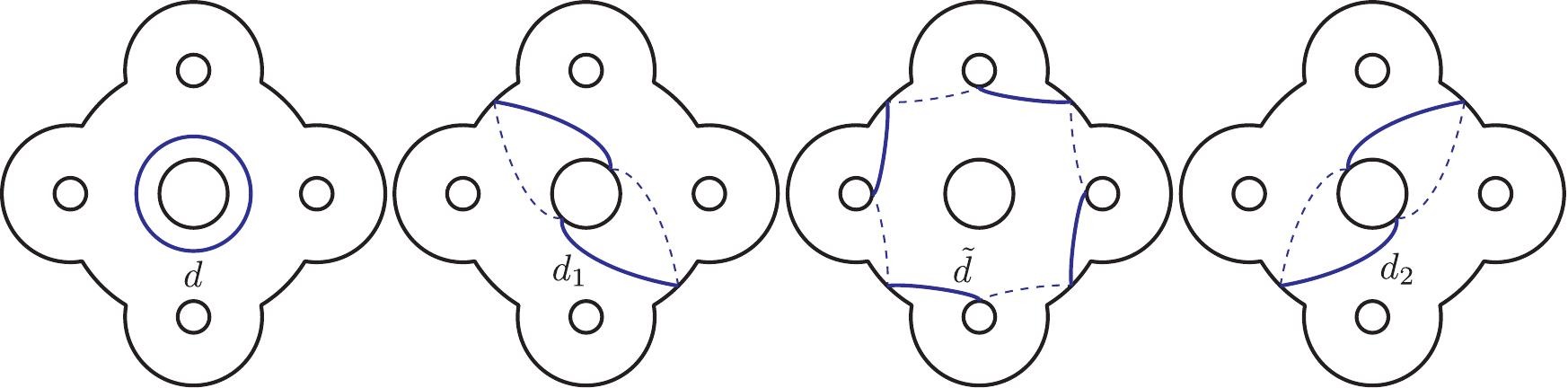} 
	\caption{The curves $d, d_1, \tilde{d}, d_2$ are the vanishing cycles of the spin Lefschetz fibration.} 
	\label{fig:spinLFcurves}
\end{figure}

\noindent
\underline{Spin condition}:
Now we find a quadratic form $q$ on $H_1(\Sigma_5;\mathbb{Z}_2)$ such that $q$ 
assigns $1$ to all the vanishing cycles, whereas each mapping class entry in  the commutator 
preserves $q$.
Consider the geometric basis $\mathcal{B}':=\{ \alpha_1,\beta_1\, \dots, \alpha_5,\beta_5 \}$ for $H_1(\Sigma_5;\mathbb{Z})$ given by the oriented curves in Figure~\ref{fig:Genus5H1basis}, which also yield a symplectic basis for  $H_1(\Sigma_5;\mathbb{Z}_2)$.

\begin{figure}[htbp]
	\centering
	\includegraphics[height=120pt]{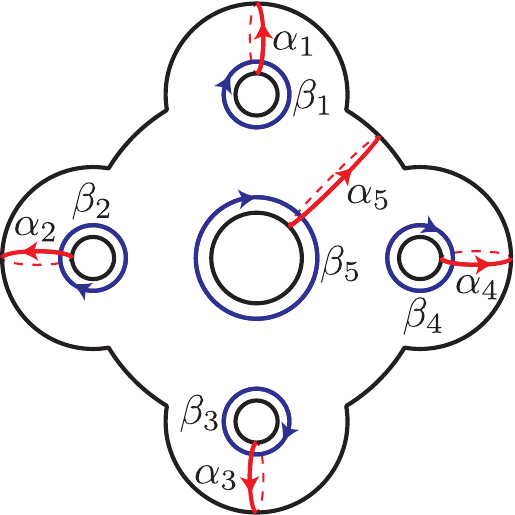}
	\caption{The generators for $H_1(\Sigma_5;\mathbb{Z})$.} 
	\label{fig:Genus5H1basis}
\end{figure}

Then, the mod $2$ homology classes of the vanishing cycles are
\begin{align*}
	d=d_1=d_2= \beta_5, \quad \tilde{d} = \alpha_1 + \alpha_2 +\alpha_3 + 
	\alpha_4 + \beta_5.
\end{align*}
We define the quadratic form $q$ on $H_1(\Sigma_5;\mathbb{Z}_2)$ by setting
\begin{align*}
	q(\alpha_i)=q(\beta_j)=1 \text{ for all $i,j$ except for } q(\alpha_5)=0
\end{align*}
and we let $s$ to be the corresponding spin structure on $\Sigma_5$.
This quadratic form satisfies  $q(d)=q(d_1)=q(d_2)=q(\tilde{d})=1$.
Next, observe that $q(a_i)=q(b_i)=q(c_i)=1$ for all $i$ and $q(d_3)=1$, hence 
Dehn twists along those curves are in $\M(\Sigma_5,s)$.
It follows that the two entries of the commutator in the positive factorization~\eqref{eq:genus5LFovertorus_spin}, 
$t_{c_1}t_{c_2}t_{c_3}t_{c_4} t_{a_4}^{-1}t_{a_3}^{-1}t_{a_2}^{-1}t_{a_1}^{-1}$
and $t_{d_3}^{-1}\phi$,
preserve $q$ since each one only involves Dehn twists in $\M(\Sigma_5,s)$.
Therefore, the quadratic form $q$ prescribes a spin structure on $Y$.

\smallskip
\noindent
\underline{An infinite family of spin fibrations over $T^2$}:
Going back to the construction of the relation~\eqref{eq:genus5LFovertorus_v1}, let us consider the diffeomorphisms 
\begin{align*}
	\phi_n = t_{a_1}^n \phi
\end{align*}
for $n \in \mathbb{Z}$.
Since the curve $a_1$ is disjoint from $\sigma_i$, $a_i$, and $a_4^\prime$, the  mapping class $\phi_n$ also maps $(a_i,c_i)$ to $(\sigma_i,a_i)$ for $i=1,2,3$ and $(a_4,c_4)$ to $(\sigma_4,a_4^\prime)$.
Therefore, we can replace $\phi$ by $\phi_n$ in the construction of~\eqref{eq:genus5LFovertorus_spin} and thus obtain
\begin{align} \label{eq:genus5LFoverTorusInfinite}
	\DT{d_2}\DT{d}\DT{d_1}\DT{\tilde{d}} \;
	[\psi, \LDT{d_3}\phi_n]
	= \DT{\delta}^0,
\end{align} 
where $\psi=t_{c_1}t_{c_2}t_{c_3}t_{c_4} t_{a_4}^{-1}t_{a_3}^{-1}t_{a_2}^{-1}t_{a_1}^{-1}$. This prescribes a genus--$5$ Lefschetz fibration $h_n \colon Y_n \to T^2$ with a section $s'_n  \in Y_n$ of self-intersection zero, for each $n \in \Z$.  

Note  that $q(a_1)=1$ where $q$ is the quadratic form we described above. 
Hence, $t_{a_1}$ preserves $q$, and so does $t_{d_3}^{-1} \phi_n = t_{d_3}^{-1} t_{a_1}^n \phi$.
We have already seen that $\psi$ preserves $q$ and that $q(d)=q(d_1)=q(d_2)= q(\tilde{d})= 1$.
Therefore, the quadratic form $q$ still prescribes a spin structure on $Y_n$.

More generally,  using any embedding  $\Sigma_5^1 \hookrightarrow \Sigma_g^1$,  we can view the positive factorizations
~\eqref{eq:genus5LFoverTorusInfinite} in $\M(\Sigma_g^1)$ instead,  for each $g \geq 5$.  Each one of these factorizations in $\M(\Sigma_g^1)$ now prescribes a genus--$g$ Lefschetz fibration \mbox{$h_{g,n} \colon Y_{g,n} \to T^2$} with a section $s'_{g,n} \subset Y_{g,n}$ of self-intersection zero.  Note that   $(Y_n, h_n)=(Y_{5,n}, h_{5,n})$.  Furthermore,  if we  take the restriction of the spin structure $s$ on $\Sigma_5$ to the subsurface $\Sigma_5^1$,  and through the above embedding,  extend it with \emph{any} spin structure on $\Sigma_{g-5}^1$,  we will get a spin structure on $\Sigma_g= \Sigma_5^1 \cup \Sigma_{g-5}^1$ that is also preserved by the monodromy of the genus--$g$ fibration,  for each $g \geq 5$. 

For each fixed $g \geq 5$,  the total spaces in the family $\{ Y_{g,n} \, | \, n \in \N \}$ will be pairwise homotopy inequivalent.  Once again, this will follow from having distinct first homology groups, which we will show next.

\smallskip
\noindent \underline{First homology calculation}: Next we will compute $H_1(Y_n)$ using the above geometric basis $\mathcal{B}'$ for $H_1(\Sigma_5; \Z)$. 
The homology group $H_1(Y_n)$ is isomorphic to the quotient of $H_1(T^2) \oplus H_1(\Sigma_5)$ modulo the following types of relations:
\begin{enumerate}[(i)]
	\item
	$\eta=0$ for each  representative $\eta$ of the Lefschetz vanishing cycles $d, d_1, d_2$ and $d_3$ (taken with auxiliary orientations) expressed in  the basis $\mathcal{B}'$,  and
	\item
	$\psi(\gamma) = \gamma$ and 
	$\phi_n(\gamma) =\gamma$ for each $\gamma \in \mathcal{B}'$.
\end{enumerate}

We compute and list the necessary data below.
\begin{align*}
	&d = \beta_5; &
	&d_1 = -2\alpha_5 +\beta_5; \\
	&d_2 = 2\alpha_5 +\beta_5; &
	&d_3 = 4\alpha_5 +\beta_5; \\
	&\psi(\alpha_1) = \alpha_1; & 
	&\psi(\beta_1) = \alpha_5 +\beta_1; \\
	&\psi(\alpha_2) = \alpha_2; &
	&\psi(\beta_2) = \alpha_5 +\beta_2; \\
	&\psi(\alpha_3) = \alpha_3; &
	&\psi(\beta_3) = \alpha_5 +\beta_3; \\
	&\psi(\alpha_4) = \alpha_4; &
	&\psi(\beta_4) = \alpha_5 +\beta_4; \\
	&\psi(\alpha_5) = \alpha_5; &
	&\psi(\beta_5) = \alpha_1 +\alpha_2 +\alpha_3 +\alpha_4 -4\alpha_5 +\beta_5; \\
	&\phi_n(\alpha_1) = \alpha_1 +\alpha_5; &
	&\phi_n(\beta_1) = -n\alpha_1 -\alpha_5 +\beta_1; \\
	&\phi_n(\alpha_2) = \alpha_2 +\alpha_5; &
	&\phi_n(\beta_2) = -\alpha_5 +\beta_2; \\
	&\phi_n(\alpha_3) = \alpha_3 +\alpha_5; &
	&\phi_n(\beta_3) = -\alpha_5 +\beta_3; \\
	&\phi_n(\alpha_4) = \alpha_4 +\alpha_5; &
	&\phi_n(\beta_4) = -\alpha_5 +\beta_4; \\
	&\phi_n(\alpha_5) = \alpha_5; &
	&\phi_n(\beta_5) = (n-1)\alpha_1 -\alpha_2 -\alpha_3 -\alpha_4 -4\alpha_5 -\beta_1 -\beta_2 -\beta_3 -\beta_4 +\beta_5.
\end{align*}
Hence, we obtain the following nontrivial relations:
\begin{align}
	&\beta_5 =0; \label{rel:genus5LF1}\\ 
	&-2\alpha_5 +\beta_5=0; \label{rel:genus5LF2}\\
	&2\alpha_5 +\beta_5 =0; \label{rel:genus5LF3}\\
	&4\alpha_5 +\beta_5 =0; \label{rel:genus5LF4}\\
	&\alpha_5 +\beta_1 =\beta_1; \label{rel:genus5LF5}\\ 
	&\alpha_5 +\beta_2 =\beta_2; \label{rel:genus5LF6}\\
	&\alpha_5 +\beta_3 =\beta_3; \label{rel:genus5LF7}\\
	&\alpha_5 +\beta_4 =\beta_4; \label{rel:genus5LF8}\\
	&\alpha_1 +\alpha_2 +\alpha_3 +\alpha_4 -4\alpha_5 +\beta_5 =\beta_5; \label{rel:genus5LF9}\\
	&\alpha_1 +\alpha_5 =\alpha_1; \label{rel:genus5LF10}\\
	&\alpha_2 +\alpha_5 =\alpha_2; \label{rel:genus5LF11}\\
	&\alpha_3 +\alpha_5 =\alpha_3; \label{rel:genus5LF12}\\
	&\alpha_4 +\alpha_5 =\alpha_4; \label{rel:genus5LF13}\\
	&-n\alpha_1 -\alpha_5 +\beta_1 =\beta_1; \label{rel:genus5LF14}\\
	&-\alpha_5 +\beta_2 =\beta_2; \label{rel:genus5LF15}\\
	&-\alpha_5 +\beta_3 =\beta_3; \label{rel:genus5LF16}\\
	&-\alpha_5 +\beta_4 =\beta_4; \label{rel:genus5LF17}\\
	&(n-1)\alpha_1 -\alpha_2 -\alpha_3 -\alpha_4 -4\alpha_5 -\beta_1 -\beta_2 -\beta_3 -\beta_4 +\beta_5 =\beta_5. \label{rel:genus5LF18}
\end{align}
We have $\beta_5=0$ from~\eqref{rel:genus5LF1} and $\alpha_5=0$ from~\eqref{rel:genus5LF5}.
With $\alpha_5=0$ the relation~\eqref{rel:genus5LF14} reduces to $n\alpha_1=0$.
From~\eqref{rel:genus5LF9} with $\alpha_5=\beta_5=0$ we get $\alpha_4 = -\alpha_1 -\alpha_2 -\alpha_3$.
Then we substitute $n\alpha_1=\alpha_5=\beta_5=0$ and $\alpha_4 = -\alpha_1 -\alpha_2 -\alpha_3$ in~\eqref{rel:genus5LF18} to obtain $\beta_4 = -\beta_1 -\beta_2 -\beta_3$.
The reduced relations so far are
\begin{align*}
	\alpha_4 &= -\alpha_1 -\alpha_2 -\alpha_3; \\
	\beta_4 &= -\beta_1 -\beta_2 -\beta_3; \\
	\alpha_5 &=0; \\
	\beta_5 &= 0; \\
	n\alpha_1 &=0.
\end{align*}
We see that the above five relations generate all  the relations~\eqref{rel:genus5LF1}--\eqref{rel:genus5LF18}.
Therefore,
\begin{align}
	H_1(Y_n)=H_1(T^2) \oplus \mathbb{Z}\left< \alpha_1, \alpha_2, \alpha_3, \beta_1, \beta_2, \beta_3 \;|\; n\alpha_1=0 \right> = \mathbb{Z}^7 \oplus \mathbb{Z}/n\mathbb{Z}.
\end{align}

Lastly, since the monodromy action of $\pi_1(T^2)$ on $H_1(\Sigma_g)$ is a trivial extension of its action on $H_1(\Sigma_5^1)$,  the  homology calculation above also implies that $H_1(Y_{g,n}) \cong \mathbb{Z}^{2g-3} \oplus \mathbb{Z}/n\mathbb{Z}$.

\medskip
\noindent
\underline{Algebraic topology of $Y_{g,n}$}: As before, we can calculate the topological invariants of $Y_{g,n}$ using the explicit positive factorization for $h_{g,n}$.  The Euler characteristic of $Y_{g,n}$ is  given by 
\[\eu(Y_{g,n})=\eu(\Sigma_g) \, \eu(T^2) + \ell= 0+ 4 = 4, \]
where $\ell$ is the number of Dehn twists in the factorization.  

The signature of $Y_{g,n}$ is given by an algebraic count of the mapping class group relations we have employed to derive the  positive factorization for the fibration $h_{g,n}$ in  $\M(\Sigma_g)$.  We  used a single $2$--chain and seven lantern relations,  whereas all the other steps,  including our variation  $\phi_n$ of $\phi$,  have no effect on the signature calculation.  So,  $\sigma(Y_{g,n})=0$ for all $g \geq 5$ and $n \in \Z$. 

The remaining homology groups of $Y_{g,n}$,  as well as $b_2^+$ and $b_2^-$ are then determined by their Euler characteristic,  signature and first homology as before.  In particular,  we get $H_2(Y_{g,n})= \Z^{4g-4} \oplus \,\Z / n \Z$ and $b_2^{\pm}(Y_{g,n})= 2g-2$ for $n \neq 0$,  whereas $H_2(Y_{g,0})= \Z^{4g-2}$ and $b_2^{\pm}(Y_{g,0})= 2g-1$.

As we argued above,  each $(Y_{g,n}, h_{g,n})$ has  spin monodromy, and, moreover,  admits a section $s'_{g,n}$ of even self-intersection.  Thus,  by Proposition~\ref{prop:spin} every $Y_{g,n}$ is spin.

\medskip
\noindent
\underline{Differential topology of $Y_{g,n}$}: The fibration  $(Y_{g,n}, h_{g,n})$ can be equipped with a Gompf--Thurston symplectic form so that the fibers and the section $s'_{g,n}$ are  symplectic.  Since $Y_{g,n}$ is also spin, it is a minimal symplectic $4$--manifold.  We calculate
\[  c_1^2(Y_{g,n})= 8 \ \text{ and } \ \chi_h(Y_{g,n})=1 \]
for any $g \geq 5$ and $n \in \Z$.  The symplectic Kodaira dimension of $Y_{g,n}$ is $\kappa(Y_{g,n})=2$.

For each genus $g \geq 5$,  the total spaces in the family $\{ Y_{g,n} \, | \, n \in \N \}$ have distinct first homology groups, and therefore they are homotopy inequivalent.  As before,  if needed,   passing to an infinite countable subset and  relabeling the index set as $\N$ again,  we can get an infinite family of spin symplectic $4$--manifolds $\{Y_{g,n} \,  | \, n \in \N \}$,  none of which  are  homotopy equivalent to any compact complex surface.\footnote{Here for $n\neq 0$ the first Betti number of  $ Y_{g,n}$ is odd,  so  it cannot be a K\"{a}hler surface,  and in turn (since $\kappa(Y_{g,n})=2$),  it already cannot be a complex surface for any $g \geq 5$ and $n \neq 0$. }
\end{proof}

\begin{remark} \label{rk:smaller}
Other examples of Lefschetz fibrations over the $2$--torus with very few nodes were given in  \cite{HamadaMinimal, StipsiczYun, CataneseEtal}.  A few words on why the examples we obtained above are (almost) as good as it gets: Assume that $(X,f)$ is a genus--$g$ Lefschetz fibration over the $2$--torus with $\ell$ nodes and $\sigma(X)=0$. The existence of an almost complex structure on $X$ implies that $\eu(X)+ \sigma (X) \equiv 0$ (mod $4$), and since $\eu(X)=\ell$, we conclude that $\ell=4$ is the smallest number of nodes one can have in this case. As for the fiber genus, note that by the signature formula for hyperelliptic Lefschetz fibrations \cite{Matsumoto, Endo}, one cannot expect to have $\sigma(X)=0$ when $g=1$ or $2$. Thus, with our genus--$4$ non-spin examples in Theorem~\ref{thm:nonspinLFs} in mind (cf. \cite{CataneseEtal} for a spin example), the only remaining possibility for an even smaller example is a genus--$3$ fibration, which we have not been able to produce. 

We should also note that for our applications to follow, along with some conditions on the configuration of the vanishing cycles, the existence of a self-intersection zero section is  essential. The genus--$5$ examples in Theorem~\ref{thm:spinLFs} were the best we could obtain with this additional property in the spin case.
\end{remark}

\smallskip
\section{Exotic smooth structures on simply connected signature zero $4$--manifolds} 
%======================================================================================

We build the promised families of irreducible $4$--manifolds with signature zero in this section; first the non-spin families, and then the spin ones.  

To produce symplectic $4$--manifolds homeomorphic but not diffeomorphic to connected sums of $\CP \# \CPb$ and $S^2 \times S^2$, we will first extend the Lefschetz fibrations over the $2$--torus given in Theorems~\ref{thm:nonspinLFs} and~\ref{thm:spinLFs}, respectively, to a  Lefschetz fibration over a genus--$2$ surface.  These \emph{model} symplectic $4$--manifolds will have intersection forms that are stabilizations of the intersection forms of 
$\#_{2m+1} (\CP \,\#\, \CPb)$  and 
$\#_{2n+1} (S^2 \x S^2)$ with many copies of the hyperbolic matrix
$H=\begin{pmatrix}
0 & 1 \\
1 & 0 
\end{pmatrix}$. \linebreak
They will have a larger fundamental group  (in regards to the minimal number of generators) than that of the subfibration we began with, but will now contain many homologically essential Lagrangian tori that  carry the $\pi_1$ generators. Performing a Luttinger surgery along the link of Lagrangian tori in this Lefschetz fibration, we will be able to get a new symplectic $4$--manifold whose $\pi_1$ is normally generated by a pair of curves on the fiber. Two of the vanishing cycles we arranged for the subfibration (\emph{the kernel}) from Theorems~\ref{thm:nonspinLFs} and~\ref{thm:spinLFs} will kill these two curves, allowing us to conclude that we have a simply connected $4$--manifold. Then we vary this construction to produce the desired infinite families.  

Having given away the plot, let us now provide the details.

\subsection{Irreducible $4$--manifolds homeomorphic to $\#_{2m+1} (\CP \,\#\, \CPb)$} \
%======================================================================================

The following constitutes the non-spin half of Theorem~A:

\begin{theorem} \label{thm:nonspinexotic}
For each integer $m \geq 4$,  there exist an infinite family of pairwise non-diffeomorphic irreducible $4$--manifolds $\mathcal{F}_m:=\{ Z_{m,k} \, | \, k \in \Z^+ \}$ in the homeomorphism class of $\#_{2m+1} (\CP \,\#\, \CPb)$,  such that each $Z_{m,k}$ is obtained by a  surgery along the same link $\mathcal{L}_m$ of Lagrangian tori  in a fixed genus \mbox{$g=m+1$} symplectic Lefschetz fibration $\mathcal{Z}_m$ over $\Sigma_2$.  Each $\mathcal{F}_m$ contains a minimal symplectic \mbox{$4$--manifold} and infinitely many irreducible $4$--manifolds that do not admit a symplectic structure.  
\end{theorem}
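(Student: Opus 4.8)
The plan is to build the model symplectic Lefschetz fibration $\mathcal{Z}_m$ over $\Sigma_2$ by taking the genus-$g$ fibration $(X_{g,n}, f_{g,n})$ over $T^2$ from Theorem~\ref{thm:nonspinLFs} (with $g = m+1$) together with its self-intersection zero section, and fiber-summing — or more precisely, gluing along a boundary fiber — with a copy of the trivial $\Sigma_g$-bundle over $\Sigma_1^1$ chosen to realize the product submanifold $\Sigma_g^1 \times \Sigma_1^1$ of Proposition~\ref{prop:Luttinger}. Concretely, since $(X_{g,n}, f_{g,n})$ is a fibration over $T^2$ with a section, removing a fibered neighborhood of a regular fiber exhibits it as a Lefschetz fibration over $\Sigma_1^1$, and I would glue this to the product $\Sigma_g^1 \times \Sigma_1^1 = \Sigma_g^1 \times \Sigma_1^1$ (which fibers trivially over $\Sigma_1^1$) along their boundary $\Sigma_g \times S^1$, using a fiber-preserving diffeomorphism, to obtain a Lefschetz fibration $\mathcal{Z}_m \to \Sigma_2$ with exactly the four nodes of $f_{g,n}$. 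By the Gompf–Thurston construction this $\mathcal{Z}_m$ is symplectic with symplectic fibers and section; its Euler characteristic is $4(g-1)\cdot(-2) + 4$ and its signature is still zero (signature is additive under fiber sum here, and the product piece contributes zero), so its intersection form is a hyperbolic stabilization of that of $\#_{2m+1}(\CP \# \CPb)$, with odd type inherited from the separating vanishing cycle $v_1$.

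Next I would perform the Luttinger surgeries. The product piece $\Sigma_g^1 \times \Sigma_1^1 \subset \mathcal{Z}_m$ is exactly the $X_0$ of Proposition~\ref{prop:Luttinger}; applying Configuration~1 there yields a link $\mathcal{L}_m$ of Lagrangian tori and a Luttinger surgery producing a symplectic $4$-manifold whose fundamental group is the normal closure of a pair $\{\alpha,\beta\}$ carried by a geometrically dual pair of curves $\{a,b\}$ on a fiber-parallel copy of $\Sigma = \Sigma_g^1 \times \{pt\}$. The key is then to identify $\Sigma$ with a regular fiber of $\mathcal{Z}_m$ and to choose the gluing diffeomorphism and the basis so that $a$ and $b$ become isotopic to two of the non-separating vanishing cycles of the kernel fibration — this is precisely why Theorem~\ref{thm:nonspinLFs} was arranged to have non-separating vanishing cycles $u_1, u_2, v_2$ in addition to the separating one. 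Since a vanishing cycle bounds a Lefschetz thimble, the corresponding $\pi_1$-generator dies; killing $\alpha$ and $\beta$ via $u_1$ and (say) $v_2$ (which intersect transversally once, matching case (a) of Proposition~\ref{prop:Luttinger}) then forces $\pi_1 = 1$ by the normal-generation statement. By Freedman's theorem, the simply-connected surgered manifold $Z_{m,k}$ with the intersection form computed above is homeomorphic to $\#_{2m+1}(\CP \# \CPb)$.

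For the infinite family I would introduce a parameter $k$ by varying one of the surgery coefficients in $\mathcal{L}_m$ away from the Luttinger value $1/q$ (a $(T,\gamma,1/k)$ surgery on one torus) or by Fintushel–Stern knot surgery along a fiber torus, while keeping the rest of the surgeries Luttinger so the $\pi_1$-computation is unaffected; the Luttinger choice ($k=1$, or knot $=$ unknot) gives the minimal symplectic member, and for general $k$ one distinguishes the $Z_{m,k}$ by their Seiberg–Witten invariants (the surgery formula for torus surgeries / knot surgery gives SW-basic-class sets that depend on $k$), which also shows that for $k$ outside a finite set the manifold is not symplectic and that all $Z_{m,k}$ are pairwise non-diffeomorphic. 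Irreducibility follows because these are minimal symplectic (or SW-nontrivial with $b_2^+ > 1$) simply-connected $4$-manifolds, hence irreducible by the Hamilton–Kotschick / Usher-type argument (minimal symplectic $\Rightarrow$ irreducible in the simply-connected $b^+>1$ case).

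The main obstacle I expect is the third step in a form I glossed above: arranging the gluing map $\Sigma_g \times S^1 \to \Sigma_g \times S^1$ between the kernel fibration and the product piece so that simultaneously (i) it is fiber-preserving so that $\mathcal{Z}_m$ is genuinely a Lefschetz fibration over $\Sigma_2$ with only the four nodes, (ii) the Lagrangian tori $\mathcal{L}_m$ of Proposition~\ref{prop:Luttinger} remain Lagrangian and homologically essential in $\mathcal{Z}_m$, and (iii) the two curves $\{a,b\}$ carrying the surviving $\pi_1$-generators are identified with vanishing cycles of the kernel. This is a compatibility/bookkeeping problem: one must track the effect of the mapping class $\phi_n$ and the boundary-dragging isotopies from the proof of Theorem~\ref{thm:nonspinLFs} on the curves $a_1, b_1$ (the dual pair in Configuration~1) and check they land on $u_1, v_2$ up to isotopy and the allowed Hurwitz moves. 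I would handle it by choosing the standard generating set $\mathcal{B}_F$ of $\pi_1(\Sigma_g)$ compatibly with both Figure~\ref{fig:Genus4H1generators} and the product decomposition, and by verifying the needed isotopies at the level of curves on the explicitly drawn surfaces, exactly as in the $H_1$ computation already carried out.
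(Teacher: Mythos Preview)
Your overall strategy is on the right track, but there are two genuine gaps that would cause the argument to fail as written.

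First, your two-piece decomposition leaves the base generators $x_1, y_1$ (coming from the $\Sigma_1^1$ half of $\Sigma_2$ over which the kernel lits) unaccounted for. Applying Configuration~1 of Proposition~\ref{prop:Luttinger} inside the single product piece $\Sigma_g^1 \times \Sigma_1^1$ only involves the generators $a_i, b_i, x_2, y_2$; once all of these are killed, the monodromy relations $x_1 c x_1^{-1} = (A_1)_*(c)$ from Proposition~\ref{prop:pi1} become vacuous, and $x_1, y_1$ survive subject only to $[x_1,y_1]=1$, so the surgered manifold is not simply connected. The paper remedies this with a \emph{three}-piece decomposition $\mathcal{Z}_m = \mathcal{A} \cup \mathcal{B}_0 \cup \mathcal{C}_0$, where the extra piece $\mathcal{B}_0 = \Sigma_1^1 \times \Sigma_2$ is a neighborhood of the self-intersection-zero section over the \emph{entire} base. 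A separate round of Luttinger surgeries in $\mathcal{B}_0$ (via Proposition~\ref{prop:Luttinger}(a), with the factors swapped so that $g=2$) makes $\pi_1(\mathcal{B})$ normally generated by $\{x_2, y_2\}$ alone, and this is what eliminates $x_1, y_1$.

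Second, your claim that $u_1$ and $v_2$ intersect transversally once is false: from the homology classes computed in the proof of Theorem~\ref{thm:nonspinLFs} one has $u_1 = \beta_2 + \beta_3$ and $v_2 = \beta_1 - \beta_4$, hence $u_1 \cdot v_2 = 0$. In fact no pair among the non-separating vanishing cycles $u_1, u_2, v_2$ is geometrically dual, so case~(a) cannot be matched directly to any two of them. The paper instead applies case~(b) of Proposition~\ref{prop:Luttinger} to the piece $\mathcal{C}_0 = \Sigma_{g-1}^1 \times \Sigma_1^1$, and produces an explicit mapping class $\psi$ on $\Sigma_4^1$ with $\psi(u_2)$ representing the standard generator $a_2$ and $\psi(v_2)$ representing a $\pi_1$--word that reduces to $a_1$ in the cyclic quotient $\pi_1/N(a_2)$. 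Only after this adjustment do the thimbles over $u_2$ and $v_2$ kill the two normal generators. Your ``bookkeeping'' paragraph anticipates a difficulty here, but the specific verification you propose (matching $a_1, b_1$ to $u_1, v_2$) cannot succeed; both the choice of configuration and the choice of vanishing cycles have to change.
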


\begin{proof}
By adding a trivial commutator to the monodromy factorization~\eqref{eq:genus4LFoverTorusMonodromyInfinite} (taken with $n=0$) of the genus $g \geq 5$ Lefschetz fibration $(X_{g, 0}, f_{g, 0})$ we described in the proof of Theorem~\ref{thm:nonspinLFs}, we get a monodromy factorization 
\begin{align}  \label{eq:modelmonodromy1}
	\DT{u_1}\DT{v_1}\DT{u_2}\DT{v_2} 
 [\DT{a_1}\DT{a_2}\LDT{d_2}\LDT{d_1}, \phi_0 \,] [1, 1]
 = \DT{\delta}^0 \,   \text{ \ \ in } \M(\Sigma_g^1) \, 
\end{align} 
for a genus $g=m+1$ Lefschetz fibration $\mathcal{Z}_m$ over $\Sigma_2$ with a section of self-intersection zero.  All the mapping classes on the left-hand side of this monodromy factorization are supported in a subsurface $\Sigma_4^1 \subset \Sigma_g^1$, so the fibration $\mathcal{Z}_m \to \Sigma_2$ is a trivial extension of the complement of the section and a regular fiber of the genus--$4$ Lefschetz fibration $(X_{4,0}, f_{4,0})$, the kernel $\mathcal{K}$ of our model $\mathcal{Z}_m$.  In other words, we have a decomposition 
\[ 
\mathcal{Z}_m= (\mathcal{K} \,  \cup \,  (\Sigma_{g-5}^2 \x \Sigma_1^1))  \, \cup 
(\Sigma_1^1 \x \Sigma_2) \, \cup \,  (\Sigma_{g-1}^1 \x \Sigma_1^1) .
\]
where the fibration on $\mathcal{Z}_m \setminus  \mathcal{K}$ is just the projection onto the second factor for each  product of surfaces.  
We equip  $\mathcal{Z}_m$ with a Gompf-Thurston symplectic form, which restricts to product symplectic forms on the latter two pieces.
Label the three main  pieces in the above decomposition as 
$\mathcal{A},  \mathcal{B}_0$ and $\mathcal{C}_0$,  in the same order.  See~Figure~\ref{fig:schematic} for a schematic.  
\begin{figure}[htbp]
	\centering
	\includegraphics[width=380pt]{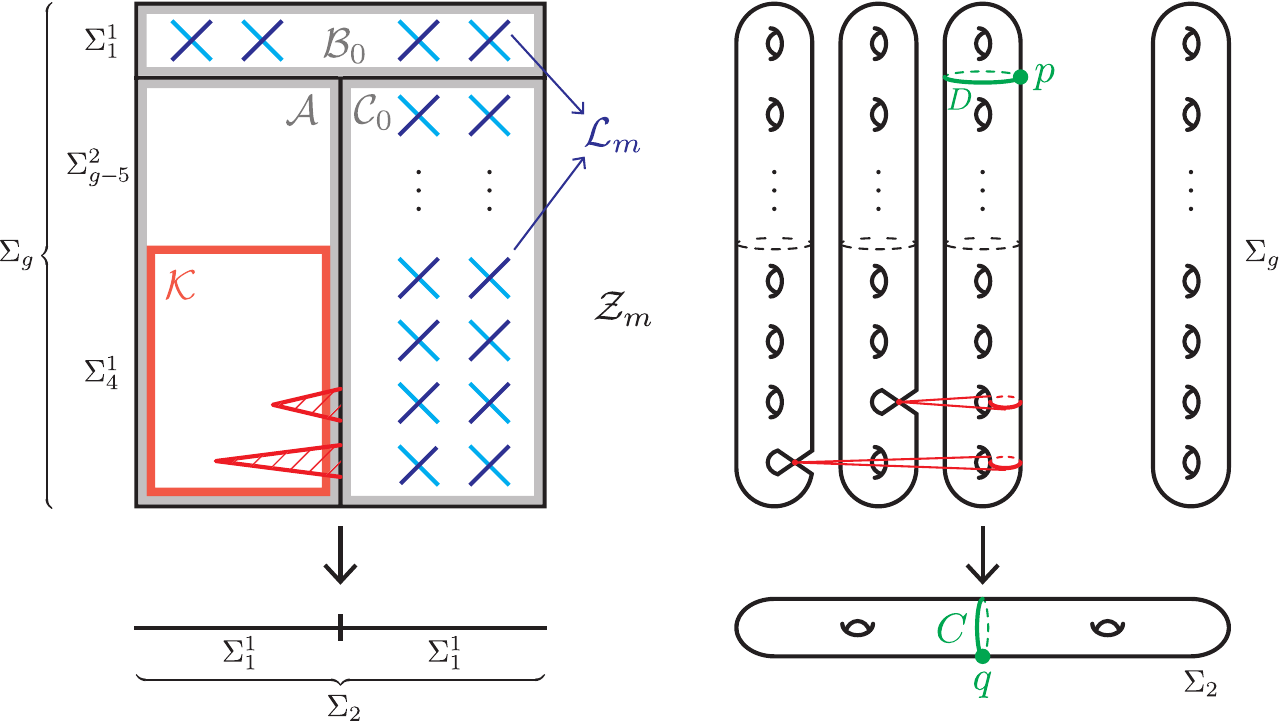}
	\caption{The decomposition of the Lefschetz fibration $\mathcal{Z}_m = \mathcal{A} \cup \mathcal{B}_0 \cup \mathcal{C}_0$. The dark blue represents the components of the link $\mathcal{L}_m$, whereas the light blue is the geometrically dual Lagrangian tori. The kernel contains the two Lefschetz thimbles bounding two different curves on the fiber.
 } 
	\label{fig:schematic}
\end{figure}

Consider the vanishing cycles  $u_2$ and $v_2$ in the monodromy of $\mathcal{Z}_m$,  which can be viewed on $\Sigma_4^1$ fiber of $\mathcal{K}$. Let $\psi = t_7^{-1} t_6^{-1} t_5^{-1} t_4^{-1} t_3^{-1} t_2 t_1$ where the Dehn twist curves are as indicated in Figure~\ref{fig:Genus4psi}. This map sends $u_2$ and $v_2$ to the pair of curves on $\Sigma_4^1$ shown in Figure~\ref{fig:Genus4x2y2}.  
%Thus,  there is a pair of disks in $\mathcal{K}$,  namely,  the Lefschetz thimbles,  bounded by the two curves $\psi(u_2)$ and $\psi(v_2)$ on a regular fiber $\Sigma_4^1$.  
We compute the representatives of $\psi(u_2)$ and $\psi(v_2)$ in $\pi_1(\Sigma_4^1)$ with respective to the standard generators given in Figure~\ref{fig:Genus4pi1generators} as
\begin{align*}
	\psi(u_2)&= a_2,\\
	\psi(v_2)&= a_1 a_2 b_2^{-1} a_3 [b_3,a_3] b_2.
\end{align*}

\begin{figure}[htbp]
	\centering
	\subfigure[Dehn twist curves for $\psi$. \label{fig:Genus4psi}]
	{\includegraphics[width=150pt]{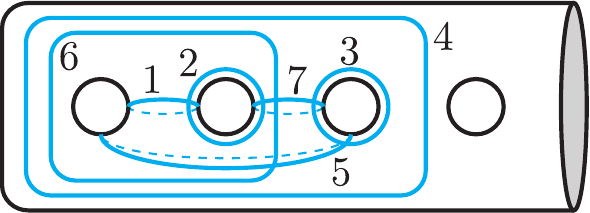}} 
	\hspace{15pt}
	\subfigure[The generators for $\pi_1(\Sigma_4^1)$. \label{fig:Genus4pi1generators}]
	{\includegraphics[width=150pt]{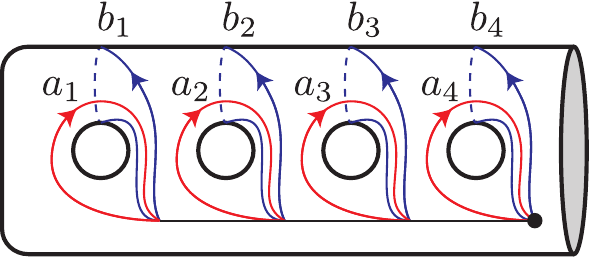}} 
	\hspace{10pt}
	\subfigure[The images of $u_2$ and $v_2$ under $\psi$. \label{fig:Genus4x2y2}]
	{\includegraphics[width=320pt]{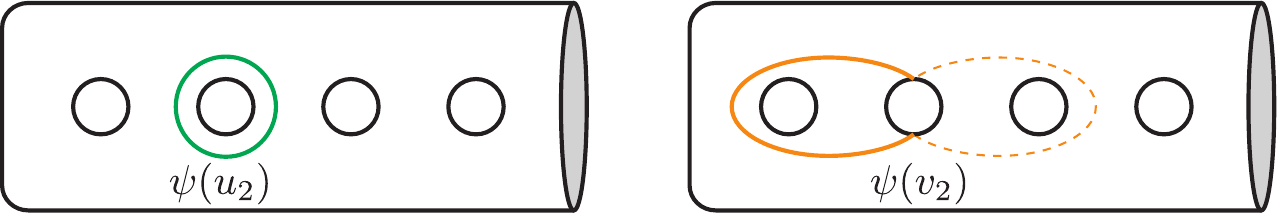}}  
	\caption{Topological configuration of $u_2$ and $v_2$. }
	\label{fig:Genus4pi1condition}
\end{figure}

We can now explain how to pick the link of Lagrangian tori $\mathcal{L}_m$ in $\mathcal{Z}_m$ so that a Luttinger surgery along  $\mathcal{L}_m$ results in a simply connected $4$--manifold.   Take a presentation for $\pi_1(\mathcal{Z}_m)$ as in Proposition~\ref{prop:pi1}; in particular,  it can be generated by the standard  bases $\{a_i, b_i\}$ for $\pi_1(\Sigma_g)$ and $\{x_j, y_j\}$ for $\pi_1(\Sigma_2)$.  We assume the monodromies over $\{x_1, y_1\}$ yield the non-trivial commutator in~\eqref{eq:modelmonodromy1} and over $\{x_2,y_2\}$ yield the trivial commutator.  Here we can identify $\Sigma_g$ and $\Sigma_2$  with a fiber over a point $q \in C$ in the base $\Sigma_2=\Sigma_1^1 \cup_C \Sigma_1^1$ and a section of self-intersection zero $\{ p \} \x \Sigma_2$ with $p \in D$ for $\Sigma_g = \Sigma_{g-1}^1 \cup_D \Sigma_1^1$, respectively,  and take $p \x q$ to be the base point (see Figure~\ref{fig:schematic}).  

By Proposition~\ref{prop:Luttinger}(a),  there is a Luttinger surgery along a link of Lagrangian tori in the second piece,  $\mathcal{B}_0=\Sigma_1^1 \x \Sigma_2$ (through the symplectomorphism switching the factors of the Euclidean product),  resulting in a symplectic $4$--manifold $\mathcal{B}$ with $\pi_1(\mathcal{B})$ normally generated by two geometrically dual curves on $\{ pt \} \x \Sigma_2 \subset \partial \mathcal{B} =\partial \mathcal{B}_0$ after we connect the base point to $p \x q$ on the  boundary.  If needed,  taking a symplectomorphism of $\mathcal{B}_0$ in the very beginning,  we can assume that these two normally generating curves are the basis curves $\{x_2, y_2\}$ on $(p \x \Sigma_2) \cap \partial \mathcal{C}_0$.

Similarly, by Proposition~\ref{prop:Luttinger}(b),  for a pair of disjoint and homologically independent curves $\{a ,b\}$ on $\Sigma_{g-1}^1 \x q$, there is a  Luttinger surgery along a link of Lagrangian tori in $\mathcal{C}_0=\Sigma_{g-1}^1 \x \Sigma_1^1$,  resulting in a symplectic $4$--manifold $\mathcal{C}$ such that  the quotient of $\pi_1(\mathcal{C})$ by the normal closure of $b$ is a cyclic group generated by the image of $a$.  Let $a$ and $b$ be the curves in the free isotopy classes of $a_1 a_3$ and $a_2$, respectively.  
By taking a symplectomorphism of $\mathcal{C}_0$ (with $\widehat{\psi}^{-1} \x \rm{id}$, where $\widehat{\psi}$ is an extension of $\psi$ on $\Sigma_4^1 \subset \Sigma_g^1$ by identity) in the beginning, we can assume  that the normally generating pair is $\{a':=\psi^{-1}(a), b':=\psi^{-1}(b)\}$  instead of  $\{a, b\}$. 

Now,  consider the simultaneous Luttinger surgery along the link $\mathcal{L}_m$ of Lagrangian tori above in $\mathcal{Z}$,  resulting in a new symplectic $4$--manifold $Z_m:=\mathcal{A} \cup \mathcal{B} \cup \mathcal{C}$.  By Seifert–Van Kampen,  we conclude that $\pi_1(Z_m)$ is normally generated by $a'$ and $b'$ such that $\pi_1(Z_m) \, / \, N(b')$ is a cyclic group generated by $a'$. However,  as our calculation above shows, the Lefschetz thimble along $u_2$ kills $b'$,  and $v_2$,  which bounds another Lefschetz thimble,  now maps to $a'$ under the quotient map $\pi_1(Z_m) \to \pi_1(Z_m) \, / \, N(b')$.  Hence $\pi_1(Z_m) =1$ as promised. 

The Euler characteristic of our model is $\eu(\mathcal{Z}_m)=4(g-1)(2-1)+4=4g=4m+4$.  Since the monodromy factorization~\eqref{eq:modelmonodromy1}  is derived from~  
\eqref{eq:genus4LFoverTorusMonodromyInfinite} without using any non-trivial relators, we have the signature $\sigma(\mathcal{Z}_m)=\sigma(X_{g,0})=0$.  The reducible genus--$2$ fiber component corresponding to the separating vanishing cycle $v_1$ in $\mathcal{K}$ is also contained in $\mathcal{Z}_m$; the presence of this surface of self-intersection $-1$ indicates that the intersection form of $\mathcal{Z}_m$ is odd.\footnote{Note also that, since the monodromy action of the base is trivial on $\mathcal{Z}_m \setminus \mathcal{K}$,  we have $H_1(\mathcal{Z}_m)=\Z^{2m+1}$, thus the  first Betti number $b_1(\mathcal{Z}_m)$ is odd and our model $\mathcal{Z}_m$ is not K\"{a}hler. } Moreover,  since $\mathcal{Z}_m$ is a relatively minimal symplectic Lefschetz fibration over a positive genus surface,  it is minimal \cite{StipsiczMinimal}.

As we obtained $Z_m$ from $\mathcal{Z}_m$ by a Luttinger surgery along a link of tori $\mathcal{L}_m$ (away from $\mathcal{K}$), we have  $\eu(Z_m)=4m+4$,  $\sigma(Z_m)=0$ and $Q_{Z_m}$ is odd.  Since $\pi_1(Z_m)=1$,  we conclude that the intersection form of $Z_m$ is of rank $4m+2$.   Therefore,  $Q_{Z_m} \cong Q_{\#_{2m+1} (\CP \,\#\, \CPb)}$,   and by Freedman's celebrated theorem,  $Z_m$ is homeomorphic to $\#_{2m+1} (\CP \,\#\, \CPb)$.  Since there is a reverse Luttinger surgery from $Z_m$ to $\mathcal{Z}_m$ and any exceptional symplectic sphere can be assumed to be disjoint from a given collection of  Lagrangian tori,  the minimality of $\mathcal{Z}_m$ implies the minimality of the symplectic $4$--manifold $Z_m$,  and in turn, we conclude that it is irreducible.  

As in \cite{FPS, AkhmedovBaykurPark},  performing non-Luttinger surgery along one component of $\mathcal{L}_m$ (with integral surgery framing $k >1 $ instead of $1$) we get an infinite family $\{Z_{m,k}  \, | \, k \in \Z^+\}$ of irreducible $4$--manifolds that are still in the same homeomorphism class, but with distinct Seiberg-Witten invariants, where no $Z_{m,k}$, other than $Z_{m,1}=Z_m$, has any basic classes that evaluate to $\pm 1$; and thus, they do not admit symplectic structures. 
\end{proof}

\smallskip
\subsection{Irreducible $4$--manifolds homeomorphic to $\#_{2n+1} (S^2 \x S^2)$.} \
%======================================================================================

Next is the spin half of Theorem~A:

\begin{theorem} \label{thm:spinexotic}
For each integer $n \geq 5$,  there exist an infinite family of pairwise non-diffeomorphic irreducible $4$--manifolds $\mathcal{F}'_n:=\{ Z'_{n,k} \, | \, k \in \Z^+ \}$ in the homeomorphism class of $\#_{2n+1} (S^2 \x S^2)$,  such that each $Z'_{n,k}$ is obtained by a surgery along the same link $\mathcal{L}'_n$ of Lagrangian tori  in a fixed  genus \mbox{$g=n+1$} symplectic Lefschetz fibration $\mathcal{Z}'_n$ over $\Sigma_2$.  Each $\mathcal{F}'_n$ contains a minimal symplectic \mbox{$4$--manifold} and infinitely many irreducible $4$--manifolds that do not admit a symplectic structure. 
\end{theorem}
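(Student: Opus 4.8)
The plan is to run the argument of Theorem~\ref{thm:nonspinexotic} with the non-spin genus--$4$ kernel replaced by the spin genus--$5$ kernel produced in Theorem~\ref{thm:spinLFs}. First I would take the spin Lefschetz fibration $(Y_{g,0},h_{g,0})$ of genus $g=n+1$ over $T^2$, with monodromy factorization~\eqref{eq:genus5LFoverTorusInfinite} at $n=0$, and adjoin a trivial commutator exactly as in~\eqref{eq:modelmonodromy1} to get a monodromy factorization in $\M(\Sigma_g^1)$ of a genus $g=n+1$ symplectic Lefschetz fibration $\mathcal{Z}'_n$ over $\Sigma_2$ with a section of self-intersection zero. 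All the mapping classes on the left are supported in a subsurface $\Sigma_5^1\subset\Sigma_g^1$, so $\mathcal{Z}'_n\to\Sigma_2$ is a trivial extension of the kernel $\mathcal{K}'=(Y_{5,0},h_{5,0})$, and one gets a decomposition $\mathcal{Z}'_n=\mathcal{A}'\cup\mathcal{B}'_0\cup\mathcal{C}'_0$ with $\mathcal{A}'\supset\mathcal{K}'$, with $\mathcal{B}'_0\cong\Sigma_1^1\times\Sigma_2$ the trivial bundle over the base with fibre the last handle of $\Sigma_g$, and with $\mathcal{C}'_0\cong\Sigma_{g-1}^1\times\Sigma_1^1$ the trivial piece over one handle of the base. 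Since $n\geq 5$ forces $g-1\geq 5$, the kernel fibre $\Sigma_5^1$ sits inside $\Sigma_{g-1}^1$ -- this is exactly where the bound $n\geq 5$ is used. After equipping $\mathcal{Z}'_n$ with a Gompf--Thurston symplectic form, Proposition~\ref{prop:spin} applies (spin monodromy together with a section of even self-intersection), so $\mathcal{Z}'_n$ is spin; the spin structure comes from $s$ on $\Sigma_5$ extended by a spin structure on the remaining handles of $\Sigma_g$, to be chosen.

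Next I would carry out the fundamental group computation as in the proof of Theorem~\ref{thm:nonspinexotic}. The two kernel vanishing cycles that do the killing are $d$ and $\tilde d$ from~\eqref{eq:genus5LFoverTorusInfinite}, which were arranged to be disjoint but not a bounding pair precisely for this purpose. I would exhibit an explicit orientation--preserving self-diffeomorphism $\theta$ of $\Sigma_5^1$ and compute, via successive Dehn twists, the classes of $\theta(d)$ and $\theta(\tilde d)$ in $\pi_1(\Sigma_5^1)$ in a standard generating set, so arranged that one of them is a single standard generator (say $a_2$) and the other a word whose image in the relevant quotient is another standard generator (say $a_1$). Applying Proposition~\ref{prop:Luttinger}(a) to $\mathcal{B}'_0$ yields a symplectic $\mathcal{B}'$ with $\pi_1(\mathcal{B}')$ normally generated by a geometrically dual pair on $\{pt\}\times\Sigma_2$, which after a preliminary symplectomorphism we take to be the base-curve pair $\{x_2,y_2\}$; applying Proposition~\ref{prop:Luttinger}(b) to $\mathcal{C}'_0$, after the symplectomorphism $\widehat\theta^{-1}\times\mathrm{id}$ with $\widehat\theta$ the identity extension of $\theta$ to $\Sigma_g^1$, yields a symplectic $\mathcal{C}'$ with $\pi_1(\mathcal{C}')/N(b)$ cyclic generated by the image of $a$, where $\{a,b\}=\{\theta^{-1}(a_1),\theta^{-1}(a_2)\}$ is a disjoint, non-separating pair on $\Sigma_g$ lying inside the copy of $\Sigma_5^1$. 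Performing the simultaneous Luttinger surgery along the link $\mathcal{L}'_n$ of all these Lagrangian tori -- all contained in $\mathcal{B}'_0\cup\mathcal{C}'_0$, hence disjoint from $\mathcal{K}'$ -- gives a symplectic $Z'_n=\mathcal{A}'\cup\mathcal{B}'\cup\mathcal{C}'$, and Seifert--van Kampen shows $\pi_1(Z'_n)$ is normally generated by $\{a,b\}$ with $\pi_1(Z'_n)/N(b)$ cyclic generated by $a$. The Lefschetz thimble in $\mathcal{K}'$ bounding $d$ then trivializes $b$, and the thimble bounding $\tilde d$ forces $a=1$ in the cyclic quotient, so $\pi_1(Z'_n)=1$.

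It remains to pin down the topology. From the factorization, $\eu(\mathcal{Z}'_n)=4(g-1)+4=4n+4$, and since this factorization is obtained from~\eqref{eq:genus5LFoverTorusInfinite} using only signature--neutral operations, $\sigma(\mathcal{Z}'_n)=\sigma(Y_{g,0})=0$; Luttinger surgery preserves both, so $\eu(Z'_n)=4n+4$ and $\sigma(Z'_n)=0$ (and $c_1^2=8$, $\chi_h=1$, $\kappa=2$). The step I expect to be the main obstacle -- and the one that genuinely distinguishes the spin case from the non-spin case -- is showing that $Z'_n$ is still spin: unlike ``odd intersection form'', which in the non-spin case is witnessed by a surface lying in the kernel and is therefore automatically inherited, being spin is a global condition that can fail under a torus surgery, so one must check that the Luttinger surgeries along $\mathcal{L}'_n$ preserve the spin structure. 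This amounts to a $\Z_2$--compatibility condition between the surgery curves on the tori of $\mathcal{L}'_n$ coming from Proposition~\ref{prop:Luttinger} and the spin structure on $\mathcal{Z}'_n$; threading this needle -- making a single spin structure compatible simultaneously with the vanishing cycles, the Dehn twists in the monodromy, the surgery curves, and the $\pi_1$--killing data -- is exactly what the careful construction of the genus--$5$ spin kernel of Theorem~\ref{thm:spinLFs} (with its particular quadratic form $q$ and vanishing cycles $d,\tilde d$), together with appropriate choices of the spin structure on the extra handles and of the symplectomorphisms of $\mathcal{B}'_0$ and $\mathcal{C}'_0$, is meant to make possible; compare the analysis in \cite{BaykurHamada}. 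Granting it, $Z'_n$ is a closed simply-connected spin $4$--manifold with $\eu=4n+4$ and $\sigma=0$, hence $b_2=4n+2$ and $Q_{Z'_n}\cong(2n+1)H\cong Q_{\#_{2n+1}(S^2\times S^2)}$, so by Freedman $Z'_n$ is homeomorphic to $\#_{2n+1}(S^2\times S^2)$. Since $\mathcal{Z}'_n$ is a relatively minimal symplectic Lefschetz fibration over a positive-genus base it is minimal by \cite{StipsiczMinimal}, and as there is a reverse Luttinger surgery from $Z'_n$ to $\mathcal{Z}'_n$ with any exceptional sphere isotopic off the Lagrangian tori, $Z'_n$ is minimal symplectic, hence irreducible. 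Finally, replacing the surgery along one component of $\mathcal{L}'_n$ by a non-Luttinger surgery of integral coefficient $k>1$ produces, as in \cite{FPS,AkhmedovBaykurPark}, an infinite family $\mathcal{F}'_n=\{Z'_{n,k}\mid k\in\Z^+\}$ of pairwise non-diffeomorphic irreducible $4$--manifolds in this homeomorphism class with distinct Seiberg--Witten invariants, none of which for $k\neq 1$ has a basic class evaluating to $\pm 1$, and hence none of which for $k\neq 1$ admits a symplectic structure.
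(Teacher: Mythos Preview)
Your strategy is the paper's, and it works; the real difference is in how you handle the spin step, which you flag as ``the main obstacle.'' The paper bypasses any direct $\Z_2$--compatibility check of the Luttinger surgery data with the spin structure entirely: since every component of $\mathcal{L}'_n$ comes (by the setup of Proposition~\ref{prop:Luttinger}) with a geometrically dual Lagrangian torus in its complement, these pairs split off hyperbolic summands from the intersection form, so $Q_{\mathcal{Z}'_n}\cong Q_{Z'_n}\oplus rH$ with $r=|\pi_0(\mathcal{L}'_n)|$. As $\mathcal{Z}'_n$ is spin, $Q_{\mathcal{Z}'_n}$ is even, hence so is $Q_{Z'_n}$; and since $Z'_n$ is simply-connected, $H_1(Z'_n)$ has no $2$--torsion, so $Z'_n$ is spin. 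That is a two-line argument requiring no further choices --- there is no needle to thread between the quadratic form, the surgery curves, and the spin structure on the extra handles.

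Two smaller points. First, the paper uses the kernel $(Y_{5,1},h_{5,1})$ rather than $(Y_{5,0},h_{5,0})$; this is immaterial for the theorem and is only done so that the model $\mathcal{Z}'_n$ itself has odd first Betti number (hence is not a complex surface). Second, you do not need to exhibit an explicit $\theta$ and compute $\theta(d),\theta(\tilde d)$ in $\pi_1$: the modification via Lemma~\ref{lem:commutatortrick} in Theorem~\ref{thm:spinLFs} was made precisely so that $d$ and $\tilde d$ are already disjoint and not a bounding pair, which is exactly the topological type of the pair $(a_1,a_2)$ in Proposition~\ref{prop:Luttinger}(b). One simply precomposes with a symplectomorphism of $\mathcal{C}'_0$ carrying $(a_1,a_2)$ to $(d,\tilde d)$ and then the two Lefschetz thimbles in $\mathcal{K}'$ kill $\pi_1$ directly.
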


\begin{proof}
By adding a trivial commutator to the monodromy factorization~\eqref{eq:genus5LFoverTorusInfinite} of the genus $g \geq 6$ Lefschetz fibration $(Y_{g, 1}, h_{g, 1})$ we described in the proof of Theorem~\ref{thm:spinLFs}, we now get a monodromy factorization 
\begin{align}  \label{eq:modelmonodromy2} 
{\DT{d_2}\DT{d}\DT{d_1}\DT{\tilde{d}} \;
	[\psi, \LDT{d_3}\phi_1]} \, [1, 1]
 = \DT{\delta}^0 \,   \text{ \ \ in } \M(\Sigma_g^1) \, 
\end{align} 
for a genus $g=n+1$ spin Lefschetz fibration $\mathcal{Z}'_n$ over $\Sigma_2$ with a section of self-intersection zero.  All the mapping classes on the left-hand side  are supported in a subsurface $\Sigma_5^1 \subset \Sigma_g^1$, so the fibration $\mathcal{Z}'_n \to \Sigma_2$ is a trivial extension of the complement of the section and a regular fiber of the genus--$5$ Lefschetz fibration $(Y_{5,1}, h_{5,1})$, the kernel $\mathcal{K}'$ of our model $\mathcal{Z}'_m$.\footnote{By choosing the second parameter in this family of Lefschetz fibration as $1$, this time we get $H_1(\mathcal{Z}'_n ) = \Z^{2n+1}$, which once again means that our model is not a complex surface.} This time, we have a decomposition 
\[ 
\mathcal{Z}'_n= (\mathcal{K'} \,  \cup \,  (\Sigma_{g-6}^2 \x \Sigma_1^1))  \, \cup 
(\Sigma_1^1 \x \Sigma_2) \, \cup \,  (\Sigma_{g-1}^1 \x \Sigma_1^1) \, ,
\]
where the fibration on $\mathcal{Z}'_n \setminus  \mathcal{K}'$ is  the projection onto the second factor for each  product of surfaces.  
We equip  $\mathcal{Z}'_n$ with a Gompf-Thurston symplectic form.
Label the three main  pieces in the above decomposition as 
$\mathcal{A}',  \mathcal{B}'_0$ and $\mathcal{C}'_0$.

We can now proceed as in the proof of Theorem~\ref{thm:nonspinexotic}.  In fact we perform the same Luttinger surgeries in $\mathcal{B}'_0 = \mathcal{B}_0$ and in $\mathcal{C}'_0= \mathcal{C}_0$ (for $m=n$) to produce the symplectic $4$--manifold $Z'_n$.  Choosing the Luttinger surgery along the link $\mathcal{L}'_n$ of Lagrangian tori in $\mathcal{Z}'_n$ this way,  we generate $\pi_1(Z'_n)$ normally by two disjoint curves $a$ and $b$ that do not separate  $\Sigma_g \x q$,  which we can assume (by applying a symplectomorphism to the piece $\mathcal{C}'_0$ at the beginning) are the vanishing cycles $d$ and $\tilde{d}$.    As the  Lefschetz thimbles in $\mathcal{K}'$ are bounded by $a$ and $b$,  we get $\pi_1(Z'_n)=1$.  

By the same line of arguments and  calculations as in the proof of Theorem~\ref{thm:nonspinexotic},  we get  $\eu(Z'_n)=4n+4$ and $\sigma(Z'_n)=0$.  Since for each component of $\mathcal{L}'_n$, there is a dual Lagrangian torus,  together they constitute a distinct hyperbolic pair $H$ in $H_2(\mathcal{Z}'_n)$.  So the intersection form $Q_{\mathcal{Z}'_n} \cong Q_{Z'_n} \oplus r H$, where $r = \pi_0(\mathcal{L}'_n)$. In particular,  because $\mathcal{Z}'_n$ was spin, it had an even intersection form,  which implies that $Q_{Z'_n}$ is also even.  Since $H_1(Z'_n)$ has no $2$--torsion,  it follows that $Z'_n$ is spin.  We then derive the infinite family $\{Z'_{n,k}  \, | \, k \in \Z^+\}$ by performing a non-Luttinger surgery along a component with framing $k>1$ to produce the infinite family of irreducible spin $4$--manifolds in the same homeomorphism class, but not admitting a symplectic structure  except for $Z'_{n,1}=Z'_n$. 
\end{proof}

\smallskip
\subsection{Further results and remarks} \

We end with a few results that follow rather immediately from our constructions above  and some remarks on our examples.

\begin{add} \label{thm:addendum} \label{add:knotsurgery}
For each integer $m \geq 5$ and $n \geq 6$,  there exists an infinite family of pairwise non-diffeomorphic minimal symplectic $4$--manifolds, which are not diffeomorphic to any complex surfaces,  in the homeomorphism class of $\#_{2m+1} (\CP \, \# \, \CPb)$ and $\#_{2n+1} (S^2 \x S^2)$,   respectively.
\end{add}

\begin{proof}
Let's take a closer look at our proofs of Theorem~\ref{thm:nonspinexotic} and~\ref{thm:spinexotic}, starting with the former.  When $m \geq 5$,  we have (at least) one more pair of geometrically dual Lagrangian tori in the product $\Sigma_{g-5}^2 \x \Sigma_1^1$  contained within the first piece $\mathcal{A}=\mathcal{K} \cup (\Sigma_{g-5}^2 \x \Sigma_1^1)$ of our model $\mathcal{Z}_m$.  These descend to a pair of homologically essential Lagrangian tori in $Z_m$.  Let  $T$ be one of them.  As shown by Gompf \cite{Gompf}, we can perturb the symplectic form on $Z_m$ so that $T$ becomes a symplectic torus of self-intersection zero.  Moreover,  $\pi_1(Z_m  \setminus T) = \pi_1(Z_m)=1$,  which follows from the meridian of $T$ in $Z_m$ being a commutator of basis elements from $\pi_1(\Sigma_g)$ and $\pi_1(\Sigma_2)$ which are trivial in $\pi_1(Z_m \setminus \nu T)$. We can thus perform Fintushel and Stern's knot surgery \cite{FSKnotSurgery} along $T$ in $Z_m$,  using an infinite family of fibered knots with distinct Alexander polynomials, which results in an infinite family of symplectic $4$--manifolds that are pairwise  non-diffeomorphic but are all in the same homeomorphism class as $Z_m$. The same goes for the spin case when $n \geq 6$ by the same array of arguments.  All these minimal symplectic $4$--manifolds have $c_1^2>0$ and have many basic classes by the knot surgery formula for the Seiberg-Witten invariants, whereas any minimal complex surface of general type has only one basic class up to sign. We conclude that, provided $m \geq 5$ and $n \geq 6$, we can also produce infinitely many (minimal) symplectic $4$--manifolds that are in the desired homeomorphism classes and are not diffeomorphic to each other or to any complex surfaces.
\end{proof}

\begin{add} \label{thm:addendum2} \label{add:smallest}
For each integer $m \geq 4$ and $n \geq 5$,  there exists an infinite family of pairwise non-diffeomorphic $4$--manifolds, each including a symplectic member, in the homeomorphism class of $(T^2 \x S^2) \#_ {2m} (\CP \, \# \, \CPb)$ and $(T^2 \x S^2) \#_{2n} (S^2 \x S^2)$, respectively.
\end{add}

\begin{proof}
We first describe in detail the construction of the symplectic examples in the homeomorphism classes of $(T^2 \x S^2) \#_ {8} (\CP \, \# \, \CPb)$ and $(T^2 \x S^2) \#_{10} (S^2 \x S^2)$. These smallest signature zero exotic $4$--manifolds, with $b_2$ two less than that of the smallest simply connected examples we have produced in this paper, are the main reason for us to include this addendum. 

For the non-spin examples, let $\mathcal{\widehat{Z}}$ be a genus--$4$ Lefschetz fibration over $\Sigma_2$ with a section $\Sigma$ of self-intersection zero, given by the monodromy factorization
\begin{align}  \label{eq:pi1ZxZmonodromy}
	\DT{u_1}\DT{v_1}\DT{u_2}\DT{v_2} 
 [\DT{a_1}\DT{a_2}\LDT{d_2}\LDT{d_1}, \phi_0 \,] [1, 1]
 = \DT{\delta}^0 \,   \text{ \ \ in } \M(\Sigma_4^1) \, ,
\end{align} 
which decomposes as 
\begin{equation} \label{newdecomposition}
\mathcal{\widehat{Z}}= \mathcal{K} \, \cup 
(D^2 \x \Sigma_2) \, \cup \,  (\Sigma_{4}^1 \x \Sigma_1^1) \, ,
\end{equation}
where the third piece is the same as $\mathcal{C}_0$ we had in the decomposition of $\mathcal{Z}_m$ earlier. Let $\mathcal{\widehat{L}}$ be the same link of $8$ Lagrangian tori in $\mathcal{C}_0$. Performing the same Luttinger surgeries along these tori as we did in the previous section yields $\mathcal{C}$, and the resulting symplectic four-manifold $\widehat{Z}$ decomposes as $\widehat{Z}=\mathcal{K} \cup \nu \Sigma \cup \mathcal{C}$. 

Once again, let $\{a_i, b_i\}$ and $\{x_j, y_j\}$ be the standard generators of $\pi_1(F)$ and $\pi_1(\Sigma)$, where $F \cong \Sigma_4$ descends from the regular fiber of $\mathcal{\widehat{Z}}$ and $\Sigma \cong \Sigma_2$ from the $0$--section. Through the inclusion of $F \cup \Sigma$, these generate $\pi_1(\mathcal{\widehat{Z}})$, and in turn, $\pi_1(\widehat{Z})$. As in the proof of Theorem~\ref{thm:nonspinexotic}, after the Luttinger surgery, the generators $a_1, b_1, \ldots, a_4, b_4, x_2, y_2$ become normally generated by two curves $a, b$ on $F$, which get killed in $\pi_1(\widehat{Z})$ by the vanishing cycles of $\mathcal{K}$.\footnote{Unlike in the proof of Theorem~\ref{thm:nonspinexotic}, without any further Luttinger surgeries that were previously performed in the additional piece $\mathcal{B}_0$, $x_1$ and $y_1$ are no longer generated by these.}
The surface relator $[x_1,y_1] [x_2,y_2] = 1$ becomes $[x_1,y_1]=1$, so $\pi_1(\widehat{Z})$, which is a quotient of $\langle x_1, y_1 \, | \, [x_1, y_1] \rangle= \Z^2$, is abelian. The only other non-trivial relators involving $x_1$ or $y_1$ read as
\begin{align*}
    x_1^{-1} c \, x_1 A(c) &= 1 \, , \\
    y_1^{-1} c \, y_1 B(c) &= 1 \, , 
\end{align*}
for each $c \in \{a_i, b_i\}$ (where $A=\DT{a_1}\DT{a_2}\LDT{d_2}\LDT{d_1}$ and $B=\phi_0$), which become $c A(c) = c B(c)=1$ (in fact, they become trivial, since all the generators $a_i, b_i$ are killed at this point). 

Hence, $\pi_1(\widehat{Z})=\Z^2$. As we calculate $\eu(\widehat{Z})=16$, $\sigma(\widehat{Z})=0$ and note that $H_2(\widehat{Z})$ contains an odd class, we conclude that $\widehat{Z}$ is homeomorphic to $(T^2 \x S^2) \#_ {8} (\CP \, \# \, \CPb)$ by \cite{HambletonKreckTeichner}. However, $\widehat{Z}$ is minimal symplectic, so they are not diffeomorphic. 

For the spin examples, let $\mathcal{\widehat{Z}}'$ be a genus--$5$ Lefschetz fibration over $\Sigma_2$ with a section $\Sigma$ of self-intersection zero, given by the monodromy factorization
\begin{align}  \label{eq:pi1ZxZmonodromy'}
{\DT{d_2}\DT{d}\DT{d_1}\DT{\tilde{d}} \;
	[\psi, \LDT{d_3}\phi_1]} \, [1, 1]
 = \DT{\delta}^0 \,   \text{ \ \ in } \M(\Sigma_5^1) \, 
\end{align} 
which decomposes as 
\begin{equation} \label{newdecomposition'}
\mathcal{\widehat{Z}}'= \mathcal{K}' \, \cup 
(D^2 \x \Sigma_2) \, \cup \,  (\Sigma_{5}^1 \x \Sigma_1^1) \, ,
\end{equation}
where the third piece is the same as $\mathcal{C}_0$ we had in the decomposition of $\mathcal{Z}'_m$ earlier. Performing the same Luttinger surgeries in $\mathcal{C}_0$, we conclude that the resulting symplectic four-manifold $\widehat{Z}'$ we get decomposes as $\widehat{Z}'=\mathcal{K}' \cup \nu \Sigma \cup \mathcal{C}$. 

A similar array of arguments as in the non-spin case then shows that $\pi_1(\widehat{Z}')=\Z^2$. We calculate $\eu(\widehat{Z}')=20$ and $\sigma(\widehat{Z})=0$. The Lefschetz fibration $\mathcal{\widehat{Z}}'$ is spin by Proposition~\ref{prop:spin} and by similar arguments as in the proof of Theorem~\ref{thm:spinexotic}, the $4$--manifold $\widehat{Z}'$ we derived by Luttinger surgeries is also spin. We conclude that $\widehat{Z}'$ is homeomorphic to $(T^2 \x S^2) \#_ {10} (S^2 \x S^2)$ by \cite{HambletonKreckTeichner}, but $\widehat{Z}'$ is minimal symplectic, so they cannot be diffeomorphic. 

The examples with larger topology are obtained by increasing the genus of the initial fibration over $\Sigma_2$ and performing Luttinger surgeries along more tori in the (larger) $\mathcal{C}_0$ piece. In each case, we obtain the infinite families by performing non-Luttinger surgeries along one of the Lagrangian tori as in the constructions given in the previous section.
\end{proof}

\medskip
\begin{remark} \label{rk:BF}
If we segregate the irreducibility condition, we can also obtain infinitely many exotic copies on connected sums of even numbers of connected sums of $\CP \# \CPb$ or $S^2 \times S^2$, namely on 
$\#_{22+4l} (\CP \# \CPb)$ and $\#_{22+4l} (S^2 \times S^2)$, for any $l \in \N$.  For this, take for example the non-spin-family $\{ Z_{5,0} \# Z_{5+2l,k} \, | \, k \in \N\}$ and the spin family $\{ Z'_{5,0} \# Z'_{5+2l,k} \, | \, k \in \N\}$, and appeal to the non-vanishing of Bauer-Furuta stable cohomotopy Seiberg-Witten invariants \cite{Bauer}.  None of these $4$--manifolds admit symplectic structures, of course.
\end{remark}

\begin{remark}\label{rk:arbitrarypi1}
Through a variation of a celebrated result (and its proof) by Gompf, we also obtain the following result: Any finitely presented group is the fundamental group of a minimal symplectic $4$--manifold with $c_1^2 = 2 c_2$, which can be taken to be non-spin or spin. As noted in the proof of the Addendum~\ref{thm:addendum},  after perturbing the symplectic form,  we can find a self-intersection zero symplectic torus $T$ with a trivial $\pi_1$ complement  in  our exotic minimal symplectic $Z_5$, as well as in $Z'_6$.  Therefore,  we can take either one of these (instead of the elliptic surfaces) as the building block $W$ in Gompf's construction \cite[Proof of Theorem~4.1]{Gompf} to derive the non-spin and spin examples,  respectively.  Note that the genus--$2$ surface of odd self-intersection in $Z_5$ is away from $T$, so the $4$--manifolds we get after fiber summing along $T$ are still non-spin.  The minimality of the resulting symplectic $4$--manifolds after fiber sums follows from  \cite{Usher}. 
\end{remark}

\begin{remark}\label{rk:kirby}
While we do not care for it here, let us note that one can, if they desire to do so, draw a Kirby diagram for any one of our exotic $4$--manifolds in Theorem~A using the explicit descriptions we have provided for the Lefschetz fibrations $\mathcal{Z}_m$ and $\mathcal{Z}'_n$  and the Luttinger surgeries along $\mathcal{L}_m$ and $\mathcal{L}'_n$ in them, respectively.  After decomposing the  fibration as a surface bundle over $\Sigma_2^1$ and a Lefschetz fibration over $D^2$, following the explicit monodromy factorization we provide, one first draws a Kirby diagram of the surface bundle using Akbulut's algorithm \cite[Sections~4.1--4.3]{Akbulut}, which importantly allows one also to describe the link of tori we surger, and then attach the four Lefschetz $2$--handles along the regular fiber as usual.  Then, following the Luttinger surgery information we provided, one modifies the pieces of the diagram as prescribed in 
\cite[Sections~6.3--6.4]{Akbulut} to arrive at the final Kirby diagram.

With the exception of our earlier work in  \cite{BaykurHamada}, 
all other constructions of exotic simply connected symplectic $4$--manifolds with signature zero we know of in the literature involve fiber summing with a complex algebraic surface along an embedded holomorphic curve in it,  where the smooth topology (e.g. in terms of handle decompositions) of the latter key ingredient is only partially understood; and there appears to be no chance of describing Kirby diagrams of them.
\end{remark}

\begin{remark}\label{rk:kernel}  
Here is an epilogue intended for the avid reader.  It should not be hard to tell that the biggest challenge in our approach to building the desired irreducible $4$--manifolds is to have the right model symplectic $4$--manifold that we can kill the fundamental group of via surgeries along tori. Utilizing pairs of geometrically dual Lagrangian tori $T$ and $T'$ has been a well-exploited idea in the literature; a Luttinger surgery here allows one to express a $\pi_1$--generator carried on one of these two tori, say $T$, as its meridian, $\mu_T$, which is the boundary of the punctured $T'$. Provided all the $\pi_1$ generators of a model symplectic $4$--manifold $X_0$ are carried on a link $\mathcal{L}$ of Lagrangian tori (with a disjoint link of tori, geometrically dual to each component), a careful choice of surgeries may make it possible to express all the $\pi_1$ generators as commutators, resulting in a $4$-manifold $X$ with trivial $H_1(X)$. Nonetheless, to get trivial $\pi_1(X)$, one needs more than dual tori, namely, some collection of immersed disks relative to $X_0 \setminus \mathcal{L}$. Our repeated use of Proposition~\ref{prop:Luttinger} highlights how in favorable models, it suffices to have just two such disks. Or just one disk, if one is after producing an exotic $4$--manifold with cyclic fundamental group; see Remark~\ref{rk:Luttinger}. The \emph{kernels} are the most essential parts of the models we use, where we have these desired disks (in the form of Lefschetz thimbles in our examples) that bound curves which will normally generate the $\pi_1(X)$ after a clever choice of Luttinger surgeries in the model $X_0$. 

The challenge to produce small exotic $\#_{2m+1}(\CP \,\#\, \CPb)$ and $\#_{2n+1} (S^2 \times S^2)$ manifests itself in the algebraic setup:
it is much harder to come up with \emph{signature zero} positive factorizations for model symplectic Lefschetz fibrations.  In fact, the existence of non-trivial Lefschetz fibrations over the $2$--sphere with  signature zero was established only very recently \cite{BaykurHamada}.
\end{remark}

\medskip
\appendix
\section{Construction of the genus--$2$ pencil}
%======================================================================================
This appendix is devoted to providing a construction of the monodromy factorization~\eqref{eq:genus2MatsumotoRelationIIB} for the genus--$2$ Lefschetz pencil that we used in Section~\ref{Sec:fibrations}.  It is one of the factorizations obtained in an unpublished preprint \cite{Hamada} by the second author, which we single out and present the construction of here.

As a first step, we construct a variation of the \textit{six--holed torus relation}, which was first found by Korkmaz and Ozbagci \cite{KorkmazOzbagci}. 
Consider the six--holed torus in Figure~\ref{F:ConstructionOf6holedTorusRelation}.
\begin{figure}[htbp]
	\centering
	\subfigure[Four-holed torus bounded by $\{ \delta_3, \delta_4, z_1, z_2 \}$.
	\label{F:ConstructionOf6holedTorusRelation_1}]
	{\hspace{0pt}
		\includegraphics[height=120pt]{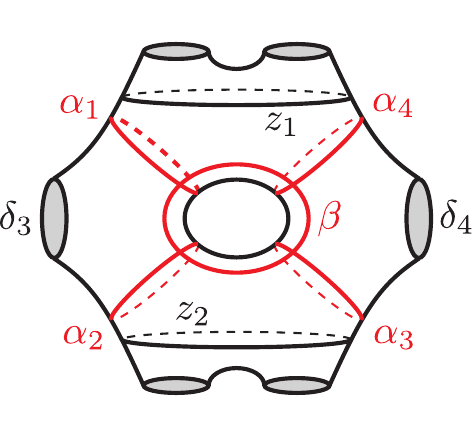}
		\hspace{0pt}} 
	\hspace{1em}	
	\subfigure[Two lantern configurations on the subsurfaces bounded by $\{ \delta_1, d_1, \alpha_1, \alpha_4 \}$ and $\{ \delta_2, d_2, \alpha_3, \alpha_2 \}$, respectively.
	\label{F:ConstructionOf6holedTorusRelation_2}]
	{\hspace{0pt}
		\includegraphics[height=120pt]{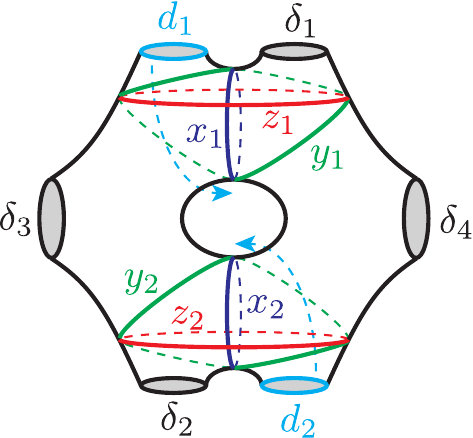}
		\hspace{0pt}} 
	\caption{Six-holed torus with boundary $\{ \delta_1, d_1, \delta_2, d_2, \delta_3, \delta_4 \}$.} \label{F:ConstructionOf6holedTorusRelation}	
\end{figure}
By the four--holed torus relation we have
\begin{align*} 
	\DT{\delta_3} \DT{\delta_4} \DT{z_1} \DT{z_2} &= (\DT{\alpha_1} \DT{\alpha_3} \DT{\beta} \DT{\alpha_2} \DT{\alpha_4} \DT{\beta})^2 \\		
	&= (\DT{\beta} \underline{\DT{\alpha_1} \DT{\alpha_3} \DT{\beta}} \DT{\alpha_2} \DT{\alpha_4} )^2 \\
	&= (\DT{\beta}  \DT{t_{\alpha_1} t_{\alpha_3}(\beta)} \DT{\alpha_1} \DT{\alpha_2} \DT{\alpha_3} \DT{\alpha_4} )^2
\end{align*}
in $\mathrm{Mod}(\Sigma_1^6)$.
Here, we used a cyclic permutation (since $t_{\beta}$ commutes with the left-hand side) and braid relations.
We rearrange the relation as follows:
\begin{align*} \allowdisplaybreaks
	\DT{\delta_3} \DT{\delta_4}   
	&= \DT{\beta}  \DT{t_{\alpha_1} t_{\alpha_3}(\beta)} \cdot \DT{\alpha_1} \DT{\alpha_4} \LDT{z_1} \cdot \DT{\alpha_3} \DT{\alpha_2} \cdot \DT{\beta}  \DT{t_{\alpha_1} t_{\alpha_3}(\beta)} \cdot \DT{\alpha_2} \DT{\alpha_3} \LDT{z_2} \cdot \DT{\alpha_1} \DT{\alpha_4}.
\end{align*}
Then consider the two lantern relations $t_{x_1} t_{y_1} t_{z_1} = t_{\alpha_1} t_{\alpha_4} t_{d_1} t_{\delta_1}$ and $t_{x_2} t_{y_2} t_{z_2} = t_{\alpha_2} t_{\alpha_3} t_{d_2} t_{\delta_2}$ with the curves as depicted in Figure~\ref{F:ConstructionOf6holedTorusRelation_2}, and substitute them to the above equation to obtain	
\begin{align*}
	\DT{\delta_3} \DT{\delta_4}   
	= \DT{\beta}  \DT{t_{\alpha_1} t_{\alpha_3}(\beta)} \cdot 
	\LDT{\delta_1} \LDT{d_1} \DT{x_1} \DT{y_1} \cdot 
	\DT{\alpha_3} \DT{\alpha_2} \cdot 
	\DT{\beta}  \DT{t_{\alpha_1} t_{\alpha_3}(\beta)} \cdot 
	\DT{x_2} \DT{y_2} \LDT{d_2} \LDT{\delta_2} \cdot 
	\DT{\alpha_1} \DT{\alpha_4}.
\end{align*}
This can be rearranged to
\begin{align*}  
	\DT{\delta_1} \DT{\delta_2} \DT{\delta_3} \DT{\delta_4} = 
	\DT{\beta} \DT{t_{\alpha_1} t_{\alpha_3}(\beta)} \cdot 
	\LDT{d_1} \DT{x_1} \DT{\alpha_3} \cdot \DT{y_1} \DT{\alpha_2} \cdot 
	\DT{\beta} \DT{t_{\alpha_1} t_{\alpha_3}(\beta)} \cdot 
	\DT{x_2} \DT{\alpha_1} \cdot \DT{y_2} \DT{\alpha_4} \LDT{d_2}. 
\end{align*}

The next step is to embed the six-holed torus into a genus--$2$ surface with four boundary components.
See Figures~\ref{F:ConstructionOf6holedTorusRelation_2} and~\ref{F:Construction_MatsumotoRelation_Lift}.
First, we push the boundary components $d_1$ and $d_2$ of the six-holed torus to the center along the dashed arrows.
Then, connect the two boundary components using an obvious tube.
In this way, we can regard that the last relation holds in $\mathrm{Mod}(\Sigma_2^4)$.
\begin{figure}[htbp]
	\centering
	\includegraphics[height=110pt]{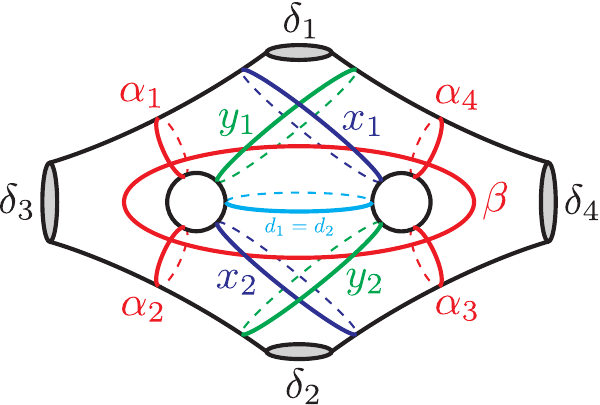}
	\caption{Four-holed surface of genus $2$ with boundary $\{ \delta_1, \delta_2, \delta_3, \delta_4 \}$, which is obtained by pushing $d_1$ and $d_2$ in Figure~\ref{F:ConstructionOf6holedTorusRelation_2} as indicated by the dashed arrows and then gluing them.} \label{F:Construction_MatsumotoRelation_Lift}	
\end{figure}
\begin{figure}[htbp]
	\centering
	\subfigure[Lantern configuration on the subsurface bounded by $\{ x_1, \alpha_3, x_2, \alpha_1 \}$.
	\label{F:Construction_MatsumotoRelation_Lift_L1}]
	{\includegraphics[height=90pt]{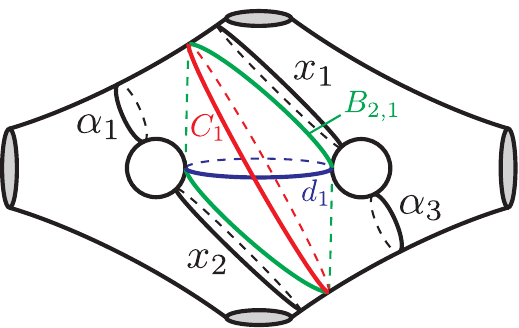}} 
	\hspace{.8em}	
	\subfigure[Lantern configuration on the subsurface bounded by $\{ y_1, \alpha_2, y_2, \alpha_4 \}$.
	\label{F:Construction_MatsumotoRelation_Lift_L2}]
	{\includegraphics[height=90pt]{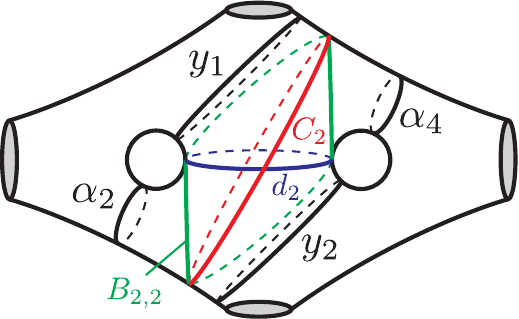}} 
	\caption{Lantern configurations.} \label{F:Construction_MatsumotoRelation_Lift_L1L2}	
\end{figure}

Using braid relations and cyclic permutations, we alter the equation,
\begin{align*}
	\DT{\delta_1} \DT{\delta_2} \DT{\delta_3} \DT{\delta_4} &= 
	\DT{\beta} \DT{t_{\alpha_1} t_{\alpha_3}(\beta)} \cdot 
	\LDT{d_1} \DT{x_1} \DT{\alpha_3} \cdot 
	\underline{\DT{y_1} \DT{\alpha_2} \cdot 
	\DT{\beta} \DT{t_{\alpha_1} t_{\alpha_3}(\beta)} \cdot 
	\DT{x_2} \DT{\alpha_1}} \cdot 
	\DT{y_2} \DT{\alpha_4} \LDT{d_2} \\
	&=
	\DT{\beta} \DT{t_{\alpha_1} t_{\alpha_3}(\beta)} \cdot 
	\LDT{d_1} \DT{x_1} \DT{\alpha_3} \DT{x_2} \DT{\alpha_1} \cdot
	\underline{\DT{y_1} \DT{\alpha_2} \cdot
	\DT{t_{\alpha_1}^{-1}t_{x_2}^{-1}(\beta)} \DT{t_{x_2}^{-1} t_{\alpha_3}(\beta)}} \cdot 
	\DT{y_2} \DT{\alpha_4} \LDT{d_2} \\
	&=
	\DT{\beta} \DT{t_{\alpha_1} t_{\alpha_3}(\beta)} \cdot 
	\LDT{d_1} \DT{x_1} \DT{\alpha_3} \DT{x_2} \DT{\alpha_1} \cdot
	\DT{t_{y_1} t_{\alpha_2}t_{\alpha_1}^{-1}t_{x_2}^{-1}(\beta)} \DT{t_{y_1} t_{\alpha_2}t_{x_2}^{-1} t_{\alpha_3}(\beta)} \cdot 
	\DT{y_1} \DT{\alpha_2} \DT{y_2} \DT{\alpha_4} \LDT{d_2}.
\end{align*}
Finally we find two lantern relations $t_{x_1} t_{\alpha_3} t_{x_2} t_{\alpha_1} = t_{B_{2,1}} t_{C_1} t_{d_1}$ and $t_{y_1} t_{\alpha_2} t_{y_2} t_{\alpha_4} = t_{B_{2,2}} t_{C_2} t_{d_2}$ as in Figure~\ref{F:Construction_MatsumotoRelation_Lift_L1L2}, which can be substituted to the equation and we get
\begin{align*}
	\DT{\beta} \DT{t_{\alpha_1} t_{\alpha_3}(\beta)} \cdot 
	\DT{B_{2,1}} \DT{C_1} \cdot 
	\DT{t_{y_1} t_{\alpha_2}t_{\alpha_1}^{-1}t_{x_2}^{-1}(\beta)} \DT{t_{y_1} t_{\alpha_2}t_{x_2}^{-1} t_{\alpha_3}(\beta)} \cdot
	\DT{B_{2,2}} \DT{C_2}
	=
	\DT{\delta_1} \DT{\delta_2} \DT{\delta_3} \DT{\delta_4}.
\end{align*}
One can directly check that $\beta = B_{0,1}$, $t_{\alpha_1} t_{\alpha_3}(\beta) = B_{1,1}$, $t_{y_1} t_{\alpha_2}t_{\alpha_1}^{-1}t_{x_2}^{-1}(\beta) = B_{0,2}$ and $t_{y_1} t_{\alpha_2}t_{x_2}^{-1} t_{\alpha_3}(\beta) = B_{1,2}$ (see Figure~\ref{fig:Genus2Matsumoto_LiftIIB}), hence the last equation is nothing but the claimed relation
\begin{align*}
	\DT{B_{0,1}} \DT{B_{1,1}} \DT{B_{2,1}} \DT{C_1} \DT{B_{0,2}} \DT{B_{1,2}} \DT{B_{2,2}} \DT{C_2}
	=\DT{\delta_1} \DT{\delta_2} \DT{\delta_3} \DT{\delta_4}.
\end{align*}

\bigskip

\end{document}